\title{Stratified categories, geometric fixed points and a generalized Arone-Ching theorem}
\author{Saul Glasman}
\newcommand{\Ab}{\mathbf{Ab}}
\newcommand{\angs}[1]{\langle #1 \rangle}
\newcommand{\Aut}{\text{Aut}}
\newcommand{\bb}{\mathbb}
\newcommand{\bu}{\bullet}
\newcommand{\bv}{\bigvee}
\newcommand{\cof}{\text{cof}}
\newcommand{\colim}[1]{\underset{#1}{\text{colim }}}
\newcommand{\D}{\Delta}
\newcommand{\eps}{\epsilon}
\newcommand{\Fun}{\text{Fun}}
\newcommand{\Hom}{\text{Hom}}
\newcommand{\Id}{\text{Id}}
\newcommand{\id}{\text{id}}
\newcommand{\im}{\text{im }}
\newcommand{\inc}{\subseteq}
\newcommand{\inj}{\hookrightarrow}
\newcommand{\iy}{\infty}
\newcommand{\la}{\lambda}
\newcommand{\lef}{\leftarrow}
\newcommand{\La}{\Lambda}
\newcommand{\llim}[1]{\underset{#1}{\text{lim }}}
\newcommand{\mb}{\mathbf}
\newcommand{\mc}{\mathcal}
\newcommand{\mf}{\mathfrak}
\newcommand{\Map}{\text{Map}}
\newcommand{\ol}{\overline}
\newcommand{\op}{\text{op}}
\newcommand{\os}{\overset}
\newcommand{\ot}{\otimes}
\newcommand{\Si}{\Sigma}
\newcommand{\Set}{\mc{S}et}
\newcommand{\Sp}{\textbf{Sp}}
\newcommand{\td}{\widetilde}
\newcommand{\toe}{\overset{\sim} \to}
\newcommand{\twa}{\widetilde{\mathcal{O}}}
\newcommand{\Top}{\textbf{Top}}
\newcommand{\X}{\times}
\newcommand{\Su}{\bb{P}}
\newcommand{\F}{\mc{F}}
\newcommand{\Mack}{\mb{Mack}}
\newcommand{\FfG}{\F r^G}
\newcommand{\CMon}{\textbf{CMon}}
\theoremstyle{definition}
\newtheorem{cor}[subsection]{Corollary}
\newtheorem{dfn}[subsection]{Definition}
\newtheorem{exa}[subsection]{Example}
\newtheorem{lem}[subsection]{Lemma}
\newtheorem{prop}[subsection]{Proposition}
\newtheorem{rec}[subsection]{Recollection}
\newtheorem{rem}[subsection]{Remark}
\newtheorem{thm}[subsection]{Theorem}
\newtheorem{war}[subsection]{Warning}
\begin{document}

\maketitle 
\begin{abstract}
We develop a theory of Mackey functors on epiorbital categories which simultaneously generalizes the theory of genuine $G$-spectra for a finite group $G$ and the theory of $n$-excisive functors on the category of spectra. Using a new theory of stratifications of a stable $\iy$-category along a finite poset, we prove a simultaneous generalization of two reconstruction theorems: one by Abram and Kriz on recovering $G$-spectra from structure on their geometric fixed point spectra for abelian $G$, and one by Arone and Ching that recovers an $n$-excisive functor from structure on its derivatives. We deduce a strong splitting theorem for $K(n)$-local $G$-spectra and reprove a theorem of Kuhn on the $K(n)$-local splitting of Taylor towers.
\end{abstract}
\section{Introduction}

Equivariant stable homotopy theory has become notorious for its profusion of fixed point functors. The most superficially arcane of these, the \emph{geometric fixed points} first defined in \cite[\S II.9]{LMS}, also have the best formal properties. Let $G$ be a finite group and $H$ a subgroup; let $\Sp^G$ be the $\iy$-category of genuine $G$-spectra. Then the geometric $H$-fixed point spectrum, which we will regard as a functor
\[\Phi^H : \Sp^G \to \Sp,\]
is uniquely determined by the following properties:
\begin{enumerate}
\item $\Phi^H$ commutes with all homotopy colimits, and
\item $\Phi^H$ is compatible with the suspension spectrum functor in that the diagram of functors
\[\begin{tikzcd}
\Top^G \ar{r}{\Si^\iy_+} \ar{d}{(-)^H} & \Sp^G \ar{d}{\Phi^H} \\
\Top \ar{r}{\Si^\iy_+} & \Sp
\end{tikzcd}\]
commutes.
\end{enumerate}
The following additional pleasant properties follow:
\begin{enumerate}
\setcounter{enumi}{2}
\item $\Phi^H$ has a natural enhancement to a symmetric monoidal functor.
\item (Geometric fixed point Whitehead theorem) If $E$ is a $G$-spectrum such that $\Phi^H E$ is contractible for \emph{all} subgroups $H$ of $G$, then $E$ itself is contractible.
\end{enumerate}
This last property strongly suggests that it might be possible to present $G$-spectra as diagrams of their geometric fixed point spectra, much as the spectral Mackey functor approach of \cite{GM} and \cite{Bar14} employs the genuine fixed point spectra $E^H$. Such a presentation would obviously be desirable, since there are many spectra - for example, those arising as norms in the sense of \cite[\S B.5]{HHR} - whose geometric fixed points are much more accessible than their genuine fixed points. Some subtlety turns out to be required in this. For example, already for $G = C_2$, it's easy to see that the spaces of natural transformations in both directions between $\Phi^G$ and the underlying spectrum, or identity fixed point, functor $\Phi^{\{e\}}$ are contractible. Nevertheless, it can be done, and in this paper, we give a construction that accomplishes this and significantly more, drawing together work of Abram and Kriz \cite{AK13} and Arone and Ching \cite{AC15}, which itself generalizes previous work of Bauer and McCarthy \cite{BM03}.
We stress that this presentation gives a completely new model of $G$-spectra in which the fundamental data are geometric fixed point spectra, their homotopical actions by groups derived from $G$, and gluing data relating these.
To motivate this generalization, we'll draw attention to an analogy between equivariant stable homotopy theory and functor calculus, which was first pointed out to us by Mike Hopkins. For $G = C_2$ and a $G$-spectrum $E$, we have a natural cofiber sequence, the \emph{norm cofibration sequence}
\[E_{hG} \to E^G \to \Phi^G E.\]
On the other hand, suppose that $F : \Sp \to \Sp$ is a reduced 2-excisive functor in the sense of Goodwillie \cite{Goo91}. Then the Taylor tower of $F$ is simply a cofiber sequence of functors
\[D_2 F \to F \to P_1 F\]
where $D_2 F$ is the 2-homogeneous part of $F$ and $P_1 F$ is the 1-excisive approximation. Fixing a spectrum $X$, let's specialize further to the case where $E$ is the indexed smash product $X^{\wedge C_2}$ and $F$ is the functor
\[W : \Sp \to \Sp, \hspace{1 em} W(T) = (T^{\wedge C_2})^{C_2},\]
evaluated on $X$. Then the norm cofibration sequence and the Taylor tower become equivalent cofiber sequences
\[(X^{\wedge C_2})_{h C_2} \to (X^{\wedge C_2})^{C_2} \to X.\]
This equivalence can be made into the basis for an equivalence of $\iy$-categories 
\[\Sp^{C_2} \simeq \Fun^{\text{2-exc}}(\Sp, \Sp)\]
between $\Sp^{C_2}$ and the category of reduced 2-excisive functors $\Sp \to \Sp$ under which the identity fixed points correspond to the second derivative and the geometric $C_2$-fixed points correspond to the first derivative.

This coincidence suggests the existence of a systematic analogy between Goodwillie derivatives and geometric fixed points. This paper develops a common context for the Goodwillie calculus of functors between stable $\iy$-categories and equivariant stable homotopy theory - that of \emph{Mackey functors on epiorbital categories} - which makes this analogy precise. In brief, $n$-excisive functors are governed by the category $\F_s^{\leq n}$ of finite sets of cardinality at most $n$ and surjective maps in precisely the same way as $G$-spectra are governed by the category $\mc{O}_G$ of transitive $G$-sets, and the visible equivalence of categories between $\F_s^{\leq 2}$ and $\mc{O}_{C_2}$ accounts for the equivalence between $\Sp^{C_2}$ and $\Fun^{\text{2-exc}}(\Sp, \Sp)$.

A beautiful presentation of $n$-excisive functors on spectra via structure on their derivatives has been constructed by Arone and Ching in \cite{AC15}, and we extend their result to our more general context, where it provides the desired presentation of the $\iy$-category of $G$-spectra. Along the way, we develop a formalism of \emph{stratified stable $\iy$-categories} that encompasses, on the one hand, the category of Mackey functors on a epiorbital category, and on the other hand, monoidal stable $\iy$-categories equipped with a family of homological localizations, such as the category of $p$-local spectra with its chromatic filtration. Our theorem will be a special case of a general reconstruction theorem for objects of stratified stable $\iy$-categories. In the context of families of homological localizations, similar results have been obtained by Antol\'in-Camarena and Barthel \cite{A-CB14}. In upcoming work, we plan to use this presentation to give an explicit and homotopy-invariant description of the Hill-Hopkins-Ravenel norm.

The structure of the paper is as follows. In Section \ref{sec:orbmack}, we develop the theory of epiorbital categories and their Mackey functors, culminating in the statement and proof of our version, Theorem \ref{thm:ac}, of the Arone-Ching comonadicity theorem \cite[Theorem 3.13]{AC15}. In Section \ref{sec:strat}, we define stratified stable $\iy$-categories (Definition \ref{def:strat}) and give several examples, then state and prove the classification of objects of a stratified stable $\iy$-category (Theorem \ref{strat}), in effect giving a description of the category of coalgebras for the comonad of Section \ref{sec:orbmack}. We unpack the implications of our theorem for $C_p$-spectra in detail in Examples \ref{exa:Cp}, obtaining the classical description of $C_p$-spectra via the Tate fracture square. Finally, in the short Section \ref{sec:knlo}, we prove a strong and general splitting theorem reminiscent of the tom Dieck splitting, Theorem \ref{thm:knlo}, for Mackey functors valued in $K(n)$-local spectra, recovering a result of Kuhn \cite{Kuh04a} on functor calculus. We believe the $G$-spectrum case of this theorem to be new.

This project has benefited from conversations with Greg Arone, Clark Barwick, Michael Ching, Jacob Lurie, Randy McCarthy and Tomer Schlank. A significant part of the impetus for the paper came from a question posed by Mike Hopkins to Clark Barwick. We thank Clark Barwick for emphasizing the importance of the nonabelian derived category and describing how to deduce Theorem \ref{thm1} from Lemma \ref{thm1DA}. We thank David Ayala, Aaron Mazel-Gee and Nick Rozenblyum for pointing out an erroneous hypothesis in Definition \ref{def:strat}. Finally, we wish to thank all of the participants of the Bourbon Seminar for generating and sustaining an environment of such productivity week after week.

\section{Epiorbital categories and Mackey functors}\label{sec:orbmack}

Our Arone-Ching theorem will be applicable to a certain class of ``generalized equivariant homotopy theories" that encompasses the usual theory of genuine $G$-spectra and the theory of $n$-excisive functors on spectra. Such a theory springs from a ``epiorbital category" with properties similar to the category of orbits for a finite group $G$.
\begin{dfn}
A \emph{epiorbital category} or EOC is an essentially finite category $\mc{M}$ satisfying the following condtions:
\begin{itemize}
\item Every morphism in $\mc{M}$ is an epimorphism.
\item $\mc{M}$ admits pushouts and coequalizers; equivalently, $\mc{M}$ admits colimits over finite connected diagrams.
\end{itemize}
\end{dfn}
Epiorbital categories have a strong directionality. 
\begin{dfn} \label{orbposet} It follows immediately from Yoneda's lemma that any endomorphism in a epiorbital category is an isomorphism, and so the set of isomorphism classes of objects of $\mc{M}$ carries a natural partial order wherein $[X] \geq [Y]$ if and only if the morphism set $\mc{M}(X, Y)$ is nonempty. We'll call this poset $\mc{P}_\mc{M}$.
\end{dfn}
\begin{exa}
Let $G$ be a finite group. Then the \emph{orbit category} $\mc{O}_G$, which is defined as the category of sets with transitive $G$-action, is a EOC.
\end{exa}
\begin{exa}\label{exa:G/H}
If $G$ is a finite group and $H$ a subgroup, then the full subcategory
\[\mc{O}_{G/H} \inc \mc{O}_G\]
spanned by those orbits on which $H$ acts trivially is again a EOC. There's no clash of notation here, for if $H$ happens to be normal, then $\mc{O}_{G/H}$ is clearly just the orbit category of the group $G/H$.
\end{exa}
\begin{exa}\label{exa:fsn}
Let $\F_s$ be the category of finite sets and surjective maps and let $\F_s^{\leq n}$ be the full subcategory of $\F_s$ spanned by the sets of cardinality at most $n$. Then $\F_s^{\leq n}$ is a EOC.
\end{exa}
The following properties of epiorbital categories will be useful:
\begin{lem}\label{orbher}
Let $\mc{M}$ be a EOC, let $\mc{K}$ be any finite category and let $\rho : \mc{K} \to \mc{M}$ be a functor. Then the overcategory $\mc{M}_{/ \rho}$ is a EOC.
\end{lem}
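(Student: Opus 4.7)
The plan is to verify the three EOC axioms for $\mc{M}_{/\rho}$ --- essential finiteness, the epi property, and existence of pushouts and coequalizers --- by bootstrapping from $\mc{M}$ via the forgetful functor $U : \mc{M}_{/\rho} \to \mc{M}$. Recall that an object of $\mc{M}_{/\rho}$ is a pair $(X, \alpha)$ consisting of $X \in \mc{M}$ together with a cone $\alpha : \Delta X \to \rho$ in $\mc{M}$, and $U$ sends $(X, \alpha) \mapsto X$; this functor is faithful, and a morphism in $\mc{M}_{/\rho}$ is simply a morphism in $\mc{M}$ commuting with the cones.

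First, essential finiteness and the epi property can be handled together. Every hom set in $\mc{M}_{/\rho}$ embeds via $U$ into a finite hom set of $\mc{M}$, so hom sets are finite. The iso classes of objects project via $U$ into the finitely many iso classes of $\mc{M}$, and for a fixed representative $X$ the set of cones $\alpha : \Delta X \to \rho$ injects into $\prod_k \mc{M}(X, \rho(k))$ with $k$ running over a finite skeleton of $\mc{K}$, hence is finite; summing over $X$ gives finitely many iso classes of $\mc{M}_{/\rho}$. For the epi property, given $f : (X, \alpha) \to (Y, \beta)$ and $g, h : (Y, \beta) \to (Z, \gamma)$ with $gf = hf$, applying $U$ and using that $U(f)$ is an epimorphism in $\mc{M}$ together with faithfulness of $U$ gives $g = h$.

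For pushouts and coequalizers, let $D : \mc{J} \to \mc{M}_{/\rho}$ be a finite connected diagram with $D(j) = (X_j, \alpha_j)$. Since $\mc{M}$ admits finite connected colimits, the composite $UD$ has a colimit $C$ in $\mc{M}$ with structure maps $\iota_j : X_j \to C$. For each $k \in \mc{K}$, the family $\{\alpha_{j,k} : X_j \to \rho(k)\}$ is compatible with the morphisms of $\mc{J}$, so factors through a unique $\beta_k : C \to \rho(k)$; naturality of the $\alpha_j$ in $k$ forces the $\beta_k$ to assemble into a cone $\beta : \Delta C \to \rho$. A routine universal-property verification identifies $(C, \beta)$ as the colimit of $D$ in $\mc{M}_{/\rho}$. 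None of these steps poses a serious obstacle: the whole argument amounts to unwinding the definition of the slice over a diagram and importing the corresponding properties from $\mc{M}$, the one point warranting mild care being that the connectedness of $\mc{J}$ is what one needs to form the underlying colimit in $\mc{M}$, while the cone structure is then produced automatically by the universal property.
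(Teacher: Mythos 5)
Your argument is correct. The paper gives no proof beyond the word ``Immediate,'' and what you have written is precisely the routine verification intended: essential finiteness and the epimorphism property descend along the faithful forgetful functor $U : \mc{M}_{/\rho} \to \mc{M}$, and $U$ creates finite connected colimits because, given a diagram $D$ in $\mc{M}_{/\rho}$, the colimit $C$ of $UD$ in $\mc{M}$ acquires a cone to $\rho$ by the universal property applied one $k \in \mc{K}$ at a time, and this cone structure is forced to be natural in $k$. There is no genuinely different route available here.
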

\begin{proof}
Immediate.
\end{proof}
\begin{lem}\label{multifinal}
Let $\mc{M}$ be a EOC. Then any connected component of $\mc{M}$ has a final object.
\end{lem}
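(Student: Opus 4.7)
The plan is to produce, in each connected component of $\mc{M}$, a distinguished minimum with respect to the poset $\mc{P}_\mc{M}$, and then to promote this minimum to a terminal object using coequalizers.

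First I would establish, by induction on the length of a zigzag, that any two objects $X, Y$ in a single connected component admit a common target: an object $Z$ equipped with morphisms $X \to Z$ and $Y \to Z$. Zigzags of length $\leq 1$ are immediate. For the inductive step, given a zigzag $X = X_0 - X_1 - \cdots - X_n = Y$, the hypothesis produces $W$ receiving morphisms from $X_0$ and $X_{n-1}$; if the final edge points $X_{n-1} \to Y$, I form the pushout of the span $W \leftarrow X_{n-1} \to Y$, which exists by the EOC axioms and receives morphisms from $X$ (via $W$) and from $Y$, while if it points $Y \to X_{n-1}$ then the composite $Y \to X_{n-1} \to W$ already exhibits $W$ as a common target.

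Second, essential finiteness of $\mc{M}$ lets me enumerate the isomorphism classes $[X_1], \ldots, [X_k]$ within a fixed connected component and iterate the previous construction to produce an object $M$ receiving morphisms from every $X_i$. Since every object of the component is isomorphic to one of the $X_i$, $M$ is a minimum of the restriction of $\mc{P}_\mc{M}$ to that component.

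The subtlest step is showing that $M$ is actually terminal, i.e., that morphisms to $M$ are unique. Given $f, g : X \to M$, I form the coequalizer $q : M \to Q$, which exists by the EOC axioms. Since $Q$ lies in the same connected component as $M$, minimality of $M$ furnishes $h : Q \to M$, and then $hq : M \to M$ is an endomorphism and thus an isomorphism by the Yoneda observation recorded in Definition \ref{orbposet}, exhibiting $q$ as a split monomorphism. But every morphism in $\mc{M}$ is epic, so $q$ is both epi and split mono, hence an isomorphism; the coequalizer identity $qf = qg$ then collapses to $f = g$. The main obstacle will be precisely this last promotion from existence to uniqueness, where the full strength of the EOC axioms (all morphisms epi, all endomorphisms iso, existence of coequalizers) is genuinely used; the first two steps are essentially bookkeeping around the existence of pushouts.
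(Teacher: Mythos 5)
Your proof is correct and takes essentially the same approach as the paper's: reduce zigzags to cospans via iterated pushouts to produce a minimal receiver in each component, then use the coequalizer together with the fact that every endomorphism is an isomorphism to force uniqueness of morphisms. The only cosmetic differences are that the paper picks a minimal isomorphism class up front (rather than constructing a minimum by iteration) and phrases the uniqueness step directly as ``the coequalizer has source $T$, hence is an isomorphism,'' whereas you route through ``split mono plus epi implies iso.''
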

\begin{proof}
We'll show that any object of $\mc{M}$ whose isomorphism class is minimal with respect to the natural partial order is final in its connected component. Indeed, let $T$ be such an object and let $X$ an object in its connected component. Then there's a zigzag of morphisms
\[T \os{f_1} \leftarrow Y_1 \os{g_1} \to Y_2 \os{f_2} \leftarrow \cdots \os{f_n} \leftarrow  X.\]
By taking iterated pushouts, we can inductively replace this zigzag with a diagram
\[T \os{f} \to Y \os{g} \leftarrow X.\]
Moreover, by the choice of $T$, $f$ must be an isomorphism, so there is a morphism from $X$ to $T$. Now suppose we have a pair of morphisms
\[f, g: X \rightrightarrows T.\]
Then $h$, the coequalizer of $f$ and $g$, is a morphism with source $T$, and therefore an isomorphism. By composing with $h^{-1}$, we see that $f = g$.
\end{proof}
If $\mc{C}$ is any category, we'll denote by $\mc{C}^\amalg$ the closure of $\mc{C}$ under formal finite coproducts. By definition, $\mc{C}^\amalg$ is the full subcategory of $\Fun(\mc{C}^\op, \Set)$ spanned by the finite coproducts of representable functors. This is an operation we'll frequently want to perform on EOCs.
\begin{exa}
$\mc{O}_G^\amalg$ is equivalent to $\F^G$, the category of all finite $G$-sets, since any finite $G$-set decomposes uniquely into orbits.
\end{exa}
One very relevant property of $\F^G$ is that it's meaningful to take Mackey functors over it: $\F^G$ is \emph{disjunctive} in the sense of \cite{Bar14}, so its effective Burnside $\iy$-category $A^{eff}(\F^G)$ can be formed and admits direct sums. We'll quickly recall these ideas.
\begin{dfn}
Let $\mb{C}$ be an $\iy$-category which admits finite products and coproducts and a zero object. We'll say that $\mb{C}$ \emph{admits direct sums}, or is \emph{semiadditive}, if for every pair of objects $X, Y \in \mb{C}$, the natural map
\[X \amalg Y \to X \X Y\]
provided by the zero object is an equivalence.
\end{dfn}
\begin{rem}
It's not hard to see that each mapping space of a semiadditive $\iy$-category naturally carries the structure of a commutative monoid space; the term \emph{additive} $\iy$-category is traditionally reserved for categories whose mapping spaces are grouplike.
\end{rem}
\begin{dfn}
An $\iy$-category $\mb{C}$ is called \emph{disjunctive} if 
\begin{itemize}
\item $\mb{C}$ admits pullbacks and finite coproducts,
\item for each finite set $I$ and for each $I$-tuple $(X_i)_{i \in I}$ of objects of $\mb{C}$, the natural functor
\[\prod_i \mb{C}_{/X_i} \to \mb{C}_{/ \coprod_i X_i}\]
is an equivalence of categories.
\end{itemize}
\end{dfn}
Let $\mb{C}$ be an $\iy$-category which admits pullbacks. Then one can construct \cite[3.6]{Bar14} an $\iy$-category $A^{eff}(\mb{C})$, called the \emph{effective Burnside category of $\mb{C}$}, whose objects are those of $\mb{C}$, whose morphisms are spans
\[\begin{tikzcd}[column sep = small]
& Z \ar{dr} \ar{dl} \\
X && Y
\end{tikzcd}\]
in $\mb{C}$, and where composition is performed by forming pullbacks.
\begin{prop}{\cite[4.3]{Bar14}} If $\mb{C}$ is disjunctive, then $A^{eff}(\mb{C})$ is semiadditive.
\end{prop}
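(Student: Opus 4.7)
The plan is to check the three ingredients of semiadditivity in turn: a zero object, the coincidence of coproducts and products, and the biproduct identity. Throughout I rely on two essential consequences of disjunctivity: the decomposition $\mb{C}_{/X \amalg Y} \simeq \mb{C}_{/X} \X \mb{C}_{/Y}$, and the contractibility $\mb{C}_{/\emp} \simeq *$ obtained by applying the defining equivalence with empty index set.

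For the zero object, I would show that the initial object $\emp$ of $\mb{C}$ becomes a zero object in $A^{eff}(\mb{C})$. A span $X \leftarrow W \rightarrow \emp$ has its right leg living in $\mb{C}_{/\emp}$, which is contractible, so $W$ is forced to be $\emp$ and the entire span is determined up to contractible choice; span reversal handles maps out of $\emp$. For the coproduct and product, I would show that $X \amalg Y$, taken in $\mb{C}$, represents both in $A^{eff}(\mb{C})$. The space $\Map_{A^{eff}(\mb{C})}(X \amalg Y, Z)$ is the $\iy$-groupoid of spans $X \amalg Y \leftarrow W \rightarrow Z$; decomposing $W \simeq W_X \amalg W_Y$ via disjunctivity and using that a map out of a $\mb{C}$-coproduct is a pair of maps yields
\[\Map_{A^{eff}(\mb{C})}(X \amalg Y, Z) \simeq \Map_{A^{eff}(\mb{C})}(X, Z) \X \Map_{A^{eff}(\mb{C})}(Y, Z),\]
which is the universal property of the coproduct. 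The self-duality of the span construction --- the canonical involution $A^{eff}(\mb{C})^\op \simeq A^{eff}(\mb{C})$ reversing each span --- then converts this identification into the universal property for the product.

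For the biproduct identity I would verify directly that the canonical map from the $A^{eff}$-coproduct to the $A^{eff}$-product induced by the zero object is represented by $\id_{X \amalg Y}$. Concretely this requires computing the four composites of coproduct inclusions $\iota_X, \iota_Y$ with product projections $\pi_X, \pi_Y$: since span composition is given by pullback in $\mb{C}$, disjunctivity produces $X \X_{X \amalg Y} X \simeq X$ and $X \X_{X \amalg Y} Y \simeq \emp$, so the resulting matrix of morphisms in $A^{eff}(\mb{C})$ is diagonal with identity entries and off-diagonal entries factoring through the zero object. The main obstacle is technical rather than conceptual: one must handle the simplicial-groupoid definition of mapping spaces in $A^{eff}(\mb{C})$ with care, but because every pullback in sight is among copies of $X$, $Y$, and $\emp$ sitting over $X \amalg Y$, disjunctivity reduces each step to a formal manipulation of coproduct decompositions.
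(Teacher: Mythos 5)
The paper cites this result from \cite[4.3]{Bar14} and supplies no proof of its own; your argument is correct and is the standard one, matching Barwick's to the best of my reading. The key computations --- the contractibility of $\mb{C}_{/\emp}$ producing the zero object, the decomposition of spans out of $X \amalg Y$ via $\mb{C}_{/X \amalg Y} \simeq \mb{C}_{/X} \X \mb{C}_{/Y}$ giving coproducts (hence products by span reversal), and the pullbacks $X \X_{X \amalg Y} X \simeq X$ and $X \X_{X \amalg Y} Y \simeq \emp$ identifying the biproduct matrix --- all follow from disjunctivity exactly as you indicate.
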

\qed
\begin{exa}
A functor from $A^{eff}(\F^G)$ to $\Ab$ which preserves direct sums is precisely a Mackey functor in the sense of \cite{tom73}.
\end{exa}
The content of the next lemma is that it's meaningful to talk about Mackey functors over arbitrary epiorbital categories:
\begin{lem} \label{lem:eocdisj}
Let $\mc{M}$ be a epiorbital category. Then $\mc{M}^\amalg$ is disjunctive.
\end{lem}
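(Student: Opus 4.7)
The plan is to verify disjunctivity in three pieces: existence of finite coproducts, existence of pullbacks, and the slice-decomposition property for coproducts. The first is immediate from the definition of $\mc{M}^\amalg$. The third will follow formally from the indecomposability of representable presheaves, once pullbacks are in hand. So the substantive work is constructing pullbacks, and that is where I would concentrate the argument.

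To construct pullbacks, I would first reduce to cospans $X \to Z \leftarrow Y$ with $X, Y, Z \in \mc{M}$, since a general cospan in $\mc{M}^\amalg$ decomposes into a disjoint union of such: every representable summand of the apex maps via a representable into exactly one summand of each of the legs. Given a cospan $\rho$ of representables, Lemma \ref{orbher} shows that the overcategory $\mc{M}_{/\rho}$ is itself an EOC, and Lemma \ref{multifinal} then guarantees that each of its connected components carries a final object. Essential finiteness provides finitely many such final objects $W_1, \ldots, W_k$, each equipped with maps $a_i : W_i \to X$ and $b_i : W_i \to Y$ with $f a_i = g b_i$. I would claim that $X \X_Z Y \simeq \coprod_i W_i$ in $\mc{M}^\amalg$, and verify this by embedding $\mc{M}^\amalg$ in $\Fun(\mc{M}^\op, \Set)$ and checking the universal property on representables: a map from a representable $W$ into $X \X_Z Y$ in presheaves is precisely an object of $\mc{M}_{/\rho}$ with underlying object $W$, while a map from $W$ into $\coprod_i W_i$ consists of a choice of index $i$ together with a morphism $W \to W_i$ in $\mc{M}$; the bijection between these two sets is exactly the content of the finality of each $W_i$ in its connected component.

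For the slice-decomposition property, given $X_1, \ldots, X_n \in \mc{M}^\amalg$ and any $f : Y \to \coprod_i X_i$, I would write $Y$ and each $X_i$ as coproducts of representables and use that a map from a representable to a coproduct of representables factors through a unique summand. This partitions the representable summands of $Y$ by which $X_i$ they land in, yielding a decomposition $Y = \coprod_i Y^{(i)}$ with $Y^{(i)} \to X_i$; the same indecomposability argument decomposes morphisms in the slice, giving an inverse to the comparison functor $\prod_i \mc{M}^\amalg_{/X_i} \to \mc{M}^\amalg_{/\coprod_i X_i}$.

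The main obstacle is the pullback construction. The correct formula is not obvious a priori, but once one recognizes that the data indexing maps from a representable into the pullback is exactly an object of $\mc{M}_{/\rho}$, the existence of the pullback becomes a direct consequence of the structural results for EOCs already established in Lemmas \ref{orbher} and \ref{multifinal}.
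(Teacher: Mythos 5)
Your proof is correct and essentially identical to the paper's: both dismiss the slice-decomposition axiom as automatic for any $\mc{D}^\amalg$, reduce the pullback problem to cospans of representables, and realize the pullback as the coproduct of the final objects in the connected components of the overcategory $\mc{M}_{/\td{\rho}}$ via Lemmas \ref{orbher} and \ref{multifinal}, with your universal-property check on representables being what the paper condenses to the word ``tautologically.'' One small caveat in your reduction step: a general cospan only splits as a coproduct of cospans along the summands of $Z$ (and you appear to have swapped apex and feet in your phrasing); decomposing $X$ and $Y$ further splits the \emph{pullback}, not the cospan, which is why the paper performs the reduction in three separate steps, though your verification is unaffected by this.
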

\begin{proof}
The condition that
\[\prod_i \mc{C}_{/X_i} \to \mc{C}_{/ \coprod_i X_i}\]
is an equivalence is satisfied for any $\mc{C}$ of the form $\mc{D}^\amalg$, so we only need to show that $\mc{M}^\amalg$ admits pullbacks. 

Let $\rho : \La^2_2 \to \mc{M}^\amalg$ be a diagram
\[\begin{tikzcd}
& X \ar{d} \\
Y \ar{r} & Z
\end{tikzcd} \]
in $\mc{M}^\amalg$. If any of $X$, $Y$ or $Z$ are empty then the pullback exists and is empty, so let's assume all are nonempty.

If $Z$ decomposes nontrivially as a coproduct $Z_1 \coprod Z_2$, we get a decomposition of diagrams $\rho = \rho_1 \coprod \rho_2$, and if each $\rho_i$ admits a limit $W_i$, then $W_1 \coprod W_2$ is a limit of $\rho$. Thus it suffices to assume $Z$ is representable.

On the other hand, if $X$ decomposes nontrivially as $X_1 \coprod X_2$, and if
\[\begin{tikzcd}
W_1 \ar{r} \ar{d} & X_1 \ar{d} & W_ 2 \ar{r} \ar{d} & X_2 \ar{d} \\
Y \ar{r} & Z, & Y \ar{r} & Z
\end{tikzcd} \]
are pullback diagrams, then
\[\begin{tikzcd}
W_1 \coprod W_2 \ar{r} \ar{d} & X \ar{d} \\
Y \ar{r} & Z \end{tikzcd}
\]
is a pullback diagram. After carrying out the same argument for $Y$, it's enough to assume that $X$, $Y$ and $Z$ are all representable - in other words, that $\rho$ may be lifted to a diagram $\td{\rho} : \La^2_2 \to \mc{M}$.

But it now follows from Lemma \ref{orbher} and Lemma \ref{multifinal} that $\rho$ admits a limit, since to give a limit of $\rho$ in $\mc{M}^\amalg$ is, tautologically, to give a final object in each connected component of the EOC $\mc{M}_{/ \td{\rho}}$.
\end{proof}
\begin{dfn}
There is a more general notion of \emph{orbital $\iy$-category} which features centrally in the upcoming work \cite{BDGNS}. An orbital $\iy$-category is simply any $\iy$-category $\mc{M}$ for which $\mc{M}^\amalg$ admits pullbacks, from which it follows that $\mc{M}^\amalg$ is disjunctive. By Lemma \ref{lem:eocdisj}, epiorbital categories are orbital. Epiorbital categories are the focus of this paper, but some results will be stated for general orbital $\iy$-categories.
\end{dfn}
\begin{dfn} \label{mackfun}
If $\mc{M}$ is an orbital $\iy$-category, we'll write $A^{eff}(\mc{M})$ for the effective Burnside category $A^{eff}(\mc{M}^\amalg)$; we don't expect this notation to cause confusion. $A^{eff}(\mc{M})$ is semiadditive, and if $\mb{C}$ is a semiadditive presentable $\iy$-category, then we'll denote by $\Mack(\mc{M}, \mb{C})$ the category of $\mb{C}$-valued Mackey functors on $\mc{M}$: the category of additive (i.e. direct-sum-preserving) functors from $\mc{M}$ to $\mb{C}$. If $\mb{C} = \Sp$, then we'll usually omit $\mb{C}$ and refer to the category simply as $\Mack(\mc{M})$.
\end{dfn}
\begin{exa}
When $\mc{M}$ is the orbit category $\mc{O}_G$, $\Mack(\mc{M})$ is the category of \emph{spectral Mackey functors} for $G$ \cite{Bar14}, which is a model for the homotopy theory of genuine $G$-spectra.
\end{exa}
\begin{exa}
The natural target of both the genuine fixed point functor $(-)^H$ and the geometric fixed point functor $\Phi^H$ (of which more anon) on $\Mack(\mc{O}_G)$ is $\Mack(\mc{O}_{G/H})$ (Example \ref{exa:G/H}), even when $H$ is not normal in $G$.
\end{exa}

\begin{thm}
When $\mc{M}$ is the category $\F_s^{\leq n}$ of Example \ref{exa:fsn}, $\Mack(\mc{M}, \mb{C})$ is equivalent to the category of (reduced, filtered-colimit-preserving) $n$-excisive functors from $\Sp$ to $\mb{C}$. This equivalence has the property that if $F : \Sp \to \mb{C}$ is $n$-excisive, if $E$ is the corresponding Mackey functor and if $S$ is a set, then $E(S)$ is equivalent to the $S$-indexed cross-effect of $F$ evaluated on an $S$-indexed set of spheres. In particular, if $S$ has $n$-elements, then $E(S)$ is equivalent to the $n$th derivative $\bb{D}_n F$ as spectra with $\Si_n$-action.

This equivalence is the subject of the separate paper \cite{Gla16}.
\end{thm}

The following is a significant technical lemma that provides control over the values of many universally defined Mackey functors, including the fixed points of the free genuine equivariant $G$-spectrum on a spectrum with $G$-action.
\begin{lem}
Suppose that $\mb{A}$, $\mb{B}$, $\mb{C}$ are semiadditive $\iy$-categories and $\phi : \mb{A} \to \mb{B}$, $F: \mb{A} \to \mb{C}$ are additive functors. Suppose the left Kan extension $\phi_! F : \mb{B} \to \mb{C}$ exists. Then $\phi_! F$ is additive.
\end{lem}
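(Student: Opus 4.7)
The plan is to use the pointwise formula: for $Y \in \mb{B}$,
\[(\phi_! F)(Y) \simeq \colim{(X, f) \in \phi \downarrow Y} F(X),\]
where $\phi \downarrow Y := \mb{A} \X_{\mb{B}} \mb{B}_{/Y}$ is the comma $\iy$-category. Since $\phi_! F$ is assumed to exist, these colimits all exist in $\mb{C}$. It suffices to verify that $(\phi_! F)(0) \simeq 0$ and that for each pair $Y_1, Y_2 \in \mb{B}$ the natural map $(\phi_! F)(Y_1) \oplus (\phi_! F)(Y_2) \to (\phi_! F)(Y_1 \oplus Y_2)$ is an equivalence.

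The zero case is an easy warm-up: since $0 \in \mb{B}$ is terminal, every mapping space $\Map_\mb{B}(\phi(X), 0)$ is contractible, so $\phi \downarrow 0 \simeq \mb{A}$; and since $0 \in \mb{A}$ is also terminal, the inclusion $\{0\} \inj \mb{A}$ is cofinal, giving $(\phi_! F)(0) \simeq F(0) \simeq 0$ by additivity of $F$.

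For the direct-sum case, I would use additivity of $\phi$ to construct a functor
\[F_\oplus : (\phi \downarrow Y_1) \X (\phi \downarrow Y_2) \to \phi \downarrow (Y_1 \oplus Y_2), \quad ((X_1, f_1), (X_2, f_2)) \mapsto (X_1 \oplus X_2, f_1 \oplus f_2),\]
and write $\mathcal{J}$ for its source. Granting that $F_\oplus$ is cofinal, the pointwise formula together with additivity of $F$ yields $(\phi_! F)(Y_1 \oplus Y_2) \simeq \colim{\mathcal{J}} \bigl(F(X_1) \oplus F(X_2)\bigr)$, and since $\oplus$ is the coproduct in $\mb{C}$ and therefore commutes with colimits in each variable, Fubini gives
\[\colim{\mathcal{J}} \bigl(F(X_1) \oplus F(X_2)\bigr) \simeq \colim{\phi \downarrow Y_1} F \oplus \colim{\phi \downarrow Y_2} F \simeq (\phi_! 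F)(Y_1) \oplus (\phi_! F)(Y_2).\]

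The main obstacle is the cofinality of $F_\oplus$, which by the standard cofinality criterion amounts to showing that for each $(X, f) \in \phi \downarrow (Y_1 \oplus Y_2)$, the slice $\iy$-category $\mathcal{J} \X_{\phi \downarrow (Y_1 \oplus Y_2)} \bigl(\phi \downarrow (Y_1 \oplus Y_2)\bigr)_{(X, f)/}$ is weakly contractible. I would exhibit an initial object: the tuple $((X, \pi_1 f), (X, \pi_2 f))$ equipped with the diagonal $\Delta_X : X \to X \oplus X$, whose image under $F_\oplus$ receives the tautological map from $(X, f)$ since $(\pi_1 f \oplus \pi_2 f) \circ \phi(\Delta_X) \simeq f$ using $\phi(\Delta_X) \simeq \Delta_{\phi(X)}$ (additivity of $\phi$). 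For any other object $((X_1, f_1), (X_2, f_2), \gamma : X \to X_1 \oplus X_2)$, the coherence $(\alpha_1 \oplus \alpha_2) \circ \Delta_X \simeq \gamma$ forces $\alpha_i \simeq \pi_i \gamma$, and a short matrix computation in the semiadditive structure shows the resulting mapping space is contractible.
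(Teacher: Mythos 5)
Your overall strategy mirrors the paper's: use the pointwise colimit formula, show by an initial-object argument that the ``external direct sum'' functor $(\phi\downarrow Y_1)\times(\phi\downarrow Y_2)\to\phi\downarrow(Y_1\oplus Y_2)$ is cofinal, then simplify the resulting colimit. The cofinality step via the diagonal is essentially identical to the argument in the paper, and is fine.

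The gap is in the final reduction. You write that ``$\oplus$ is the coproduct in $\mb{C}$ and therefore commutes with colimits in each variable,'' but this is false. The coproduct bifunctor $\oplus\colon\mb{C}\times\mb{C}\to\mb{C}$ preserves colimits as a functor out of the product $\iy$-category (i.e.\ $\colim_K(a\oplus b)\simeq\colim_K a\oplus\colim_K b$ for a \emph{single} shape $K$), but it does \emph{not} preserve colimits separately in each variable: fixing $Y$, the functor $X\mapsto X\oplus Y$ fails to preserve even binary coproducts, since $(X_0\oplus Y)\oplus(X_1\oplus Y)\not\simeq(X_0\oplus X_1)\oplus Y$. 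Concretely, if you try to compute $\colim_{A\times B}(F\circ\mathrm{pr}_A)$ by Fubini, the inner colimit over $B$ is of a \emph{constant} functor, so you get $\colim_A\bigl(F(Z_1)\otimes|B|\bigr)$ where $|B|$ is the underlying homotopy type of $B=\phi\downarrow Y_2$. To discard the $|B|$ factor and land on $\colim_A F\simeq(\phi_!F)(Y_1)$, you need to know that $\phi\downarrow Y_2$ (and symmetrically $\phi\downarrow Y_1$) is weakly contractible. This is true but is an additional input you never supply: the comma category $\mb{A}\times_\mb{B}\mb{B}_{/Y}$ has an initial object, namely $(0,\phi(0)\simeq 0\to Y)$, using that $\phi$ preserves zero objects. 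The paper carries out exactly this Fubini-plus-contractibility argument; your proof would go through if you inserted the contractibility lemma in place of the erroneous claim about $\oplus$.
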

\begin{proof}
We must verify that $\phi_! F$ preserves zero objects and direct sums of pairs of objects. The first is obvious, so let $X, Y$ be objects of $\mb{B}$. Then
\[\phi_! F(X \oplus Y) \simeq \colim{(\phi(Z) \to X \oplus Y) \in \mb{A} \X_\mb{B} \mb{B}_{/X \oplus Y}} F(Z).\]
Let
\[a : (\mb{A} \X_\mb{B} \mb{B}_{/X}) \X (\mb{A} \X_\mb{B} \mb{B}_{/Y}) \to \mb{A} \X_\mb{B} \mb{B}_{/X \oplus Y}\]
be the functor with
\[a(\phi(Z_1) \to X, \phi(Z_2) \to Y) = (\phi(Z_1 \oplus Z_2) \simeq \phi(Z_1) \oplus \phi(Z_2) \to X \oplus Y).\]
Then we claim that $a$ is cofinal. Thus we must verify that for each object $k : \phi(Z) \to X \oplus Y$ of $\mb{A} \X_\mb{B} \mb{B}_{/X \oplus Y}$, the overcategory
\[ \mb{O} := (\mb{A} \X_\mb{B} \mb{B}_{/X}) \X (\mb{A} \X_\mb{B} \mb{B}_{/Y}) \X_{\mb{A} \X_\mb{B} \mb{B}_{/X \oplus Y}} (\mb{A} \X_\mb{B} \mb{B}_{/X \oplus Y})_{/k}\]
is weakly contractible. Indeed, we claim that $\mb{O}$ has an initial object. An object of $\mb{O}$ is a pair $(Z_1, Z_2)$ of objects of $\mb{A}$ together with a morphism $\delta : Z \to Z_1 \oplus Z_2$ and a commutative diagram
\[\begin{tikzcd}
\phi(Z) \ar{d}{\phi(\delta)} \ar{r}{k} & X \oplus Y \\
\phi(Z_1 \oplus Z_2) \ar{ru}. \end{tikzcd}\]
Then the initial object of $\mb{O}$ is evidently the diagonal map $\D : Z \to Z \oplus Z$ together with the commutative diagram
\[\begin{tikzcd}
\phi(Z) \ar{d}{\phi(\D)} \ar{r}{k} & X \oplus Y \\
\phi(Z \oplus Z) \ar{ru}[below right]{k_X \oplus k_Y}. \end{tikzcd}\]
Now there is a commutative diagram
\[\begin{tikzcd} (\mb{A} \X_\mb{B} \mb{B}_{/X}) \X (\mb{A} \X_\mb{B} \mb{B}_{/Y}) \ar{r}{(F \oplus F)} \ar{d}{a} & \mb{C} \\
 \mb{A} \X_\mb{B} \mb{B}_{/X \oplus Y} \ar{ru}[below right]{F}, \end{tikzcd}\]
allowing us, by our cofinality result, to rewrite
\[\phi_!F(X \oplus Y) \simeq \colim{(\phi(Z_1) \to X, \phi(Z_2) \to Y) \in(\mb{A} \X_\mb{B} \mb{B}_{/X}) \X (\mb{A} \X_\mb{B} \mb{B}_{/Y})} F(Z_1) \oplus F(Z_2).\]
Now for any pair of functors $b_1, b_2 : K \to \mb{C}$, we have the commutation of colimits
\[\colim{}(b_1 \oplus b_2) \simeq \colim{}(b_1) \oplus \colim{}(b_2).\]
In particular, if $b_2$ is the constant functor valued at some object $P$, then
\[\colim{}(b_1 \oplus b_2) \simeq \colim{}(b_1) \oplus (P \ot K),\]
and if $K$ is weakly contractible, then
\[\colim{}(b_1 \oplus b_2) \simeq \colim{}(b_1) \oplus P.\]
Note that $\mb{A} \X_\mb{B} \mb{B}_{/ X}$ is weakly contractible for every $X \in \mb{B}$: indeed, the essentially unique object 
\[ (\mb{0}_A, \phi(\mb{0}_A) \simeq \mb{0}_B \to X)  \]
is an initial object of $\mb{A} \X_\mb{B} \mb{B}_{/ X}$. Hence
\begin{align*}
\phi_!F(X \oplus Y) & \simeq \colim{(\phi(Z_1) \to X, \phi(Z_2) \to Y) \in(\mb{A} \X_\mb{B} \mb{B}_{/X}) \X (\mb{A} \X_\mb{B} \mb{B}_{/Y})} F(Z_1) \oplus F(Z_2) \\
& \simeq \colim{(\phi(Z_1) \to X) \in \mb{A} \X_\mb{B} \mb{B}_{/X}} \hspace{1 em} \colim{(\phi(Z_2) \to Y) \in \mb{A} \X_\mb{B} \mb{B}_{/Y}} F(Z_1) \oplus F(Z_2) \\
& \simeq \colim{(\phi(Z_1) \to X) \in \mb{A} \X_\mb{B} \mb{B}_{/X}} F(Z_1) \oplus \phi_! F(Y) \\
& \simeq \phi_! F(X) \oplus \phi_! F(Y).
\end{align*}
\end{proof}
If $\mb{I}$ and $\mb{J}$ are disjunctive $\iy$-categories and $F : \mb{I} \to \mb{J}$ is a functor which preserves pullbacks and finite coproducts, then $F$ clearly induces an additive functor 
\[A^{eff}(F) : A^{eff}(\mb{I}) \to A^{eff}(\mb{J}).\]
So if $\mb{C}$ admits limits and colimits, we get functors
\[A^{eff}(F)_!, A^{eff}(F)_* : \Mack(\mb{I}, \mb{C}) \to \Mack(\mb{J}, \mb{C}).\]

If $\mc{M}$ is an orbital $\iy$-category, we'll call a full subcategory $\mc{N}$ of $\mc{M}$ \emph{downwardly-closed} if whenever $X \in \mc{N}$ and $Y \in \mc{M}$ with $\Map(X, Y)$ nonempty, we also have $Y \in \mc{N}$. Equivalently, $\mc{N} = \phi^{-1}(\{1\})$ for some functor $\phi: \mc{M} \to \D^1$. Upwardly-closed subcategories are defined dually.

There are plenty of examples of these: if $\mc{M}$ is epiorbital, then for any $X \in \mc{M}$, the full subcategory of $\mc{M}_{\leq X} \inc \mc{M}$ spanned by those objects $Y$ with $[Y] \leq [X]$ is downwardly-closed. Note also that any downwardly-closed subcategory of an orbital $\iy$-category is itself orbital, and any downwardly-closed subcategory of an EOC is itself a EOC.

For the following few lemmas, we'll let $\mc{M}$ be orbital, let $\mc{N}$ be a downwardly-closed subcategory of $\mc{M}$ and let $\mc{T}$ be its upwardly-closed complement.

\begin{lem} \label{lem:jret}  The inclusion $i_{\mc{N}} : \mc{N}_\amalg \to \mc{M}_\amalg$ admits a canonical retraction $j_{\mc{N}}$ which is right adjoint to $i_{\mc{N}}$. \end{lem}
\begin{proof}
By definition, $\mc{M}_\amalg$ is the full subcategory of $\Fun(\mc{M}^\op, \Top)$ spanned by coproducts of representables, and $i_{\mc{N}}$ is given by left Kan extension. But since the value of the functor represented by an object of $\mc{T}$ on any object of $\mc{N}$ is empty, the restriction $\Fun(\mc{M}^\op, \Set)$ to $\Fun(\mc{N}^\op, \Set)$ preserves coproducts of representables. This is the desired retraction. (Note that orbitality of $\mc{M}$ is not required for this lemma to hold.)
\end{proof}
We should think of $j_{\mc{N}}$ as ``formally set all objects of $\mc{T}$ to $\emptyset$". We note that $j_{\mc{N}}$ preserves coproducts.

\begin{dfn}
We define the \emph{geometric value at $\mc{N}$} functor 
\[\Phi^\mc{N}: \Mack(\mc{M}, \mb{C}) \to \Mack(\mc{N}, \mb{C})\]
 by the left Kan extension $(A^{eff}(j_{\mc{N}}))_!$.

The right adjoint of $\Phi^\mc{N}$, the \emph{extension by zero from $\mc{N}$} functor, will be denoted $\Xi^{\mc{N}}$. For an object $X$ of $\mc{M}$, we'll denote $\Phi^{\mc{M}_{\leq X}}$ by $\Phi^X$. If $E$ is an object of $\Mack(\mc{M}, \mb{C})$, then we'll write $E^{\Phi X}$ for the value of $\Phi^X E$ on $X$, the ``geometric fixed point spectrum at $X$".
\end{dfn}

\begin{exa}
Let $G$ be a finite group and let $H \leq G$ be a subgroup. If $\mc{M} = \mc{O}_G$, then $\Phi^{G/H}$ is the classical functor of geometric fixed points \cite[Example B.6]{Bar14}. Here the usual notation would be $\Phi^H$, not $\Phi^{G/H}$, and we apologize for the clash.

We have an equivalence
\[(\mc{O}_G)_{\leq G/H} \simeq \mc{O}_{G/H}\]
and so $\Phi^{G/H}$ naturally takes values in $\Mack(\mc{O}_{G/H})$, as previously claimed.
\end{exa}

\begin{exa}
If $\mc{M} = \F_s^{\leq n}$, and $k < n$, then $\Xi^{\F_s^{\leq k}}$ is the functor which regards a $k$-excisive functor as an $n$-excisive functor, and its left adjoint $\Phi^{\F_s^{\leq k}}$ is the $k$-excisive approximation functor.
\end{exa}

\begin{dfn}
Using the notation of Lemma \ref{lem:jret}, let $A^{eff}(\mc{T})$ be the effective Burnside category of $(\mc{T})^\amalg$, or equivalently, the full subcategory of $A^{eff}(\mc{M})$ spanned by the objects of $(\mc{T})^\amalg$. Let $A^{eff}(i_{\mc{T}})$ be the inclusion of this full subcategory.
\end{dfn}

\begin{rem}
Let $\mc{G}$ be a groupoid. We can form the effective Burnside category $A^{eff}(\mc{G})$, since any commutative square in a groupoid is a pullback square. Moreover, there's a natural equivalence of $\iy$-categories
\[c_{\mc{G}} : A^{eff}(\mc{G}) \toe \mc{G}\]
which maps the span 
\[x \os{g} \leftarrow y \os{h} \to z\]
to the morphism $hg^{-1} : x \to z$. We'll sometimes implicitly invoke this equivalence.
\end{rem}
If $X$ is an object of $\mc{M}$, let $\mc{G}_X$ be the groupoid spanned by the isomorphism class of $X$. Form the effective Burnside category $A^{eff}(\mc{G}_X^\amalg)$ and let $i_X : A^{eff}(\mc{G}_X) \to A^{eff}(\mc{M}_{\leq X})$ be the inclusion. It follows from the directionality of $\mc{M}_{\leq X}$ that $i_X$ is fully faithful.

Since there's no room for meaningful transfer maps, we might guess that a Mackey functor on a group $\mc{G}$ contains no more information than an object with $\mc{G}$-action. This is indeed the case, but the proof is technical and so we defer the bulk of it to Appendix \ref{app:Gadd}.

\begin{thm} \label{thm:Gadd}
Let $\mc{G}$ be a groupoid. Then $A^{eff}(\mc{G}^\amalg)$ is the free semiadditive $\iy$-category on $\mc{G}$: for any semiadditive $\iy$-category $\mb{C}$, the natural inclusion induces an equivalence of categories
\[\Fun^\oplus(A^{eff}(\mc{G}^\amalg), \mb{C}) \to \Fun(\mc{G}, \mb{C}).\]
\end{thm}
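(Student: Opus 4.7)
The plan is to reduce Theorem \ref{thm:Gadd} to a general universal property of the effective Burnside construction. Specifically, for any disjunctive $\iy$-category $\mb{D}$ and any semiadditive $\iy$-category $\mb{C}$, restriction along the canonical embedding $\mb{D} \to A^{eff}(\mb{D})$ (sending $f : X \to Y$ to the span $X = X \os{f} \to Y$) is known to induce an equivalence
\[\Fun^\oplus(A^{eff}(\mb{D}), \mb{C}) \toe \Fun^{\amalg}(\mb{D}, \mb{C}),\]
where the right-hand side denotes finite-coproduct-preserving functors. I would apply this with $\mb{D} = \mc{G}^\amalg$, which is disjunctive by Lemma \ref{lem:eocdisj}, and then compose with the universal property of the coproduct completion, $\Fun^{\amalg}(\mc{G}^\amalg, \mb{C}) \toe \Fun(\mc{G}, \mb{C})$, to conclude.

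If a reference for the cited universal property in the form we need is unavailable, I would argue directly as follows. First, since $\mc{G}$ has a unique object $\ast$, every object of $\mc{G}^\amalg$ is of the form $n \cdot \ast := \ast^{\amalg n}$, and additivity forces any $F \in \Fun^\oplus(A^{eff}(\mc{G}^\amalg), \mb{C})$ to be determined on objects, up to canonical equivalence, by $V := F(\ast)$. Next, using the disjunctivity of $\mc{G}^\amalg$ and the semiadditive decomposition in $A^{eff}(\mc{G}^\amalg)$, an arbitrary span $n \cdot \ast \leftarrow T \to m \cdot \ast$ splits, up to equivalence, as the direct sum over $\pi_0(T)$ of single-component spans $\ast \os{g} \leftarrow \ast \os{h} \to \ast$ indexed by the two component maps of $T$. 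By the remark preceding the theorem, each such single-component span corresponds under the equivalence $A^{eff}(\mc{G}) \toe \mc{G}$ to the element $hg^{-1} \in \mc{G}$.

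From this description, I would construct the inverse: given $V \in \Fun(\mc{G}, \mb{C})$, set $\td{F}(n \cdot \ast) = V^{\oplus n}$ and send a span $n \cdot \ast \leftarrow T \to m \cdot \ast$ to the map $V^{\oplus n} \to V^{\oplus m}$ whose $(i,j)$-component is the sum, over components of $T$ mapping to the $i$-th source copy and $j$-th target copy, of the action by the corresponding element of $\mc{G}$. Composition in $A^{eff}$ is by pullback of spans, and one must verify that pullbacks of free $\mc{G}$-sets along maps of free $\mc{G}$-sets reproduce the group composition $h g^{-1}$ — a straightforward combinatorial computation at the $1$-categorical level.

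The main obstacle is $\iy$-categorical coherence: the assignment above must be promoted to a simplicial functor respecting composition of spans up to all higher homotopies. Rather than grind these out by hand, I would exhibit $\td{F}$ as a left Kan extension along $\mc{G} \inj A^{eff}(\mc{G}^\amalg)$, so that the simplicial coherences are inherited from the universal property of the Kan extension. One then checks that this left Kan extension preserves direct sums (using the lemma just proved that Kan extensions of additive functors between semiadditive categories remain additive) and that it recovers $V$ upon restricting to $\mc{G}$ — the latter being the fully faithfulness of the inclusion $\mc{G} \simeq A^{eff}(\mc{G}) \inj A^{eff}(\mc{G}^\amalg)$. This is precisely the technical passage that justifies deferring the argument to an appendix.
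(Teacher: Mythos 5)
Your opening move — reducing to the claim that for any disjunctive $\mb{D}$ and semiadditive $\mb{C}$, restriction along $\mb{D}\to A^{eff}(\mb{D})$ induces an equivalence
\[
\Fun^\oplus(A^{eff}(\mb{D}), \mb{C}) \toe \Fun^{\amalg}(\mb{D}, \mb{C})
\]
— is false. Take $\mb{D}=\F_G$ for a nontrivial finite group $G$ and $\mb{C}=\Ab$: the left-hand side is the abelian category of $G$-Mackey functors, while the right-hand side is coefficient systems on $\mc{O}_G$, and these are not equivalent (a Mackey functor carries independent transfer data that cannot be recovered from the underlying coefficient system). The entire content of Theorem~\ref{thm:Gadd} is that, in the special case $\mb{D}=\mc{G}^\amalg$, there are \emph{no} independent transfers — because every object of $\mc{G}^\amalg$ is free over $\mc{G}$, so each component of a span is an isomorphism on both legs and the ``transfer'' $\mc{G}\amalg\mc{G}\to\mc{G}$ is already forced by direct-sum preservation. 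Your cited general lemma, if it held, would make Theorem~\ref{thm:Gadd} trivial (and would simultaneously collapse the whole theory of Mackey functors), so you cannot invoke it.

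Your fallback ``direct'' argument correctly identifies the $1$-categorical picture, and you rightly flag the $\iy$-categorical coherence as the real difficulty, but the proposed resolution via left Kan extension has a gap. The lemma in the paper applies to Kan extensions of additive functors along additive functors between \emph{semiadditive} $\iy$-categories; neither $\mc{G}$ nor $\mc{G}^\amalg$ is semiadditive, so it does not show that $\iota_! V$ is direct-sum preserving. Moreover, even granting that, you would need to prove that every $F\in\Fun^\oplus(A^{eff}(\mc{G}^\amalg),\mb{C})$ is a left Kan extension of its restriction along $\iota$ — full faithfulness of $\iota$ gives $\iota^*\iota_!\simeq\id$, but $\iota_!\iota^*\simeq\id$ on the subcategory of additive functors is exactly the unproved half of the equivalence. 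The paper avoids this entirely by identifying $A^{eff}(\mc{G}^\amalg)$ with the Lawvere theory $\mc{L}(\mc{G})$ of commutative monoids with $\mc{G}$-action: one builds a functor $A^{eff}(\F r^{\mc{G}})\to\mc{L}(\mc{G})$ by hand, checks it is an equivalence, shows the tautological $\mc{G}\times\F_*\to A^{eff}(\F r^{\mc{G}})$ is the universal $\mc{G}$-equivariant commutative monoid, and then observes that for semiadditive $\mb{C}$ the forgetful functor $\CMon(\Fun(\mc{G},\mb{C}))\to\Fun(\mc{G},\mb{C})$ is an equivalence. That is the argument you would need to recover in full.
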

We'll use this equivalence implicitly from now on.

The class of Mackey functors left or right Kan extended from groupoids is an interesting one. For instance, let $G$ be a finite group and let $X$ be a spectrum with $G$-action, which by Theorem \ref{thm:Gadd} we may regard as an object of $\Mack(\mc{G}_{G/e})$. Denote by $i : \mc{G}_{G/e}^\amalg \inj \F_G$ the inclusion. Then the left Kan extension of $X$ along $A^{eff}(i)$ is the \emph{free genuine $G$-spectrum on $X$}, often denoted by
\[EG_+ \wedge X.\]
Similarly, the right Kan extension is the \emph{cofree genuine $G$-spectrum on $X$}, also known as
\[F(EG_+, X).\]
We know from other models of $G$-spectra that for any subgroup $H \leq G$,
\[(EG_+ \wedge X)^H \simeq X_{hH}, \, F(EG_+, X)^H \simeq X^{hH}.\]
It would be desirable, however, to have a proof of these facts internal to our framework. The following lemma is a more general version of this result:

\begin{lem}
Suppose $\mc{M}$ is an orbital $\iy$-category and $i : \mc{T} \inj \mc{M}$ is the inclusion of an upwardly closed subcategory. Let $\mb{C}$ be a semiadditive $\iy$-category with all colimits, and let $B \in \Mack(\mc{T}, \mb{C})$ be a Mackey functor. Then for any $Y \in \mc{M}$,
\[i_! B(Y) \simeq \colim{\mc{M}^\amalg_{/Y} \X_{\mc{M}^\amalg} \mc{T}^\amalg} B.\]
\end{lem}
\begin{proof}
For now, we content ourselves with a sketched proof and leave the details as an exercise. This result will be assumed in some examples in Section \ref{sec:strat} but will not feature integrally in the results of the paper.

For the sake of avoiding the ambiguity that can precipitate from the use of overcategory notation, we clarify our notation. Write
\[\mc{T}^\amalg_{/Y} = \mc{M}^\amalg_{/Y} \X_{\mc{M}^\amalg} \mc{T}^\amalg\]
and
\[ (A^{eff}(\mc{T}))_{/Y} = (A^{eff}(\mc{M}))_{/Y} \X_{A^{eff}(\mc{M})} A^{eff}(\mc{T}).\]
Then we have
\[i_!B(Y) \simeq \colim{(A^{eff}(\mc{T}))_{/Y}} B.\]
The objects of $(A^{eff}(\mc{T}))_{/Y}$ are of the form
\[\begin{tikzcd}[column sep = tiny] & T' \ar{dl}[above]{f} \ar{dr} \\ T && Y. \end{tikzcd}\]
We can define a functor $V : \mc{T}^\amalg_{/Y} \to (A^{eff}(\mc{T}))_{/Y}$ by
\[V(T \to Y) = \left( \begin{tikzcd}[column sep = tiny] & T \ar[equals]{dl} \ar{dr} \\ T && Y \end{tikzcd} \right).\]
The essential image of $V$ comprises those diagrams for which $f : T' \to T$ is an equivalence, and it can be shown that $V$ is homotopic to the inclusion of the full subcategory $(\ol{A^{eff}(\mc{M})})_{/Y}$ spanned by such diagrams. Moreover, the inclusion of $(\ol{A^{eff}(\mc{M})})_{/Y}$ has a left adjoint $\phi$ with
\[\phi \left( \begin{tikzcd}[column sep = tiny] & T' \ar{dl} \ar{dr} \\ T && Y \end{tikzcd} \right)  \simeq \left( \begin{tikzcd}[column sep = tiny] & T' \ar[equals]{dl} \ar{dr} \\ T' && Y \end{tikzcd} \right). \]
Thus this inclusion is cofinal, which yields the result.
\end{proof}

\begin{cor}
Suppose $\mc{M}$ is an epiorbital category and $X \in \mc{M}$ is a maximal object. Let 
\[i_X : A^{eff}(\mc{G}_X) \to A^{eff}(\mc{M})\]
be the inclusion, let $\mb{C}$ be a semiadditive $\iy$-category with all colimits, and let $B \in \Mack(\mc{G}_X, \mb{C})$ be a Mackey functor, which by Theorem \ref{thm:Gadd} is uniquely determined by $B(X)$ regarded as an object of $\mb{C}$ with $\Aut(X)$-action. Then for any $Y \in \mc{M}$,
\[(i_X)_! B(Y) \simeq (B(X) \X \Hom_{\mc{M}}(X, Y))_{h \Aut(X)}.\]
In particular, if $Y$ is a final object,
\[(i_X)_! B(Y) \simeq B(X)_{h \Aut(X)}.\]
\end{cor}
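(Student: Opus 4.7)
The plan is to apply the previous lemma with $\mc{N} = \mc{G}_X$. First I would verify that the inclusion $i^\amalg : \mc{G}_X^\amalg \hookrightarrow \mc{M}^\amalg$ preserves pullbacks: since $\mc{M}^\amalg$ is disjunctive, pullbacks distribute over coproducts, reducing the check to cospans of the form $X \to X \leftarrow X$ with both legs in $\Aut(X)$, whose pullback is $X$ because every endomorphism in an EOC is an isomorphism. The lemma then yields
\[(i_X)_! B(Y) \simeq \colim{(S, \phi) \in \mc{M}^\amalg_{/Y} \X_{\mc{M}^\amalg} \mc{G}_X^\amalg} B(X^{\amalg S}),\]
where an object of the indexing category is a finite set $S$ together with a function $\phi : S \to H$ (writing $H = \Hom_\mc{M}(X, Y)$), and a morphism $(S, \phi) \to (S', \phi')$ consists of a map $p : S \to S'$ together with automorphisms $\sigma_s \in \Aut(X)$ satisfying $\phi'(p(s)) \sigma_s = \phi(s)$.

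Let $\mc{C}_0$ denote the full subcategory of singletons $(*, f)$ for $f \in H$. Because every morphism in $\mc{M}$ is an epimorphism, the right action of $\Aut(X)$ on $H$ by precomposition is free, and $\mc{C}_0$ is equivalent to the action groupoid $H /\!/ \Aut(X)$. The morphism description above identifies the full indexing category with the free finite coproduct completion $\mc{C}_0^\amalg$. Since $B$ is additive, the pulled-back functor preserves finite coproducts, so by the universal property of $(-)^\amalg$ it is the left Kan extension of its restriction $B|_{\mc{C}_0}$; the fact that left Kan extension commutes with the colimit to a point then gives
\[\colim{\mc{C}_0^\amalg} B \simeq \colim{\mc{C}_0} B|_{\mc{C}_0}.\]

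To compute the latter, I would observe that $B|_{\mc{C}_0}$ sends every object to $B(X)$ with $\sigma \in \Aut(X)$ acting by $B(\sigma)$; equivalently, via the projection $\mc{C}_0 \to B\Aut(X)$, it is the associated bundle of the $\Aut(X)$-object $B(X)$ with discrete fiber $H$. Computing the colimit fiberwise first (giving $B(X) \X H$ with the diagonal $\Aut(X)$-action coming from precomposition on $H$ and the given action on $B(X)$), then taking homotopy $\Aut(X)$-orbits, yields $(B(X) \X H)_{h\Aut(X)}$ as claimed. The final assertion follows because $\Hom_\mc{M}(X, Y)$ is a singleton when $Y$ is a final object.

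The main obstacle is the free-coproduct-completion identification in the second paragraph, which requires verifying that coproducts in the fiber product $\mc{M}^\amalg_{/Y} \X_{\mc{M}^\amalg} \mc{G}_X^\amalg$ are inherited from $\mc{G}_X^\amalg$ (so that additivity of $B$ on $\mc{G}_X^\amalg$ descends to additivity on the overcategory), together with the direction-of-action bookkeeping needed to match the final formula.
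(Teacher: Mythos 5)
Your approach matches the paper's, which records this corollary with no proof at all since it is meant to be immediate from the preceding lemma: take $\mc{N}=\mc{G}_X$, apply the lemma to get a colimit over $\mc{M}^\amalg_{/Y}\X_{\mc{M}^\amalg}\mc{G}_X^\amalg$, identify that indexing category with the free coproduct completion of the action groupoid $\mc{C}_0 = H/\!/\Aut(X)$, use additivity of $B$ to reduce the colimit to $\colim_{\mc{C}_0}$, and compute. Your unwinding of these steps is correct.

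One claim in your proof is false, though it does no damage. Epimorphisms are \emph{right}-cancellable, so $f\sigma=f$ does not force $\sigma=\id$; that would require $f$ to be a monomorphism, and morphisms in an EOC need not be monic. Concretely, for $\mc{M}=\mc{O}_{S_3}$, $X=S_3/e$, $Y=S_3/A_3$, the group $\Aut(X)\cong S_3$ acts on the two-element set $\Hom_\mc{M}(X,Y)$ with stabilizer $A_3$ -- far from free. Fortunately you never actually use freeness: the identification $\mc{C}_0\simeq H/\!/\Aut(X)$ holds for an arbitrary action, and the identity $\colim_{H/\!/\Aut(X)}(\text{const.\ at }B(X))\simeq (B(X)\times H)_{h\Aut(X)}$ is the general statement that one may push forward along the fibration $H/\!/\Aut(X)\to B\Aut(X)$ and then take the homotopy quotient, no freeness required. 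So this is a red herring, not a gap. It is also worth noting that maximality of $X$ does nothing for your pullback-preservation check; where it really earns its keep is in the cofinality argument inside the lemma you are quoting (the left adjoint $\phi$ there sends a span $N\leftarrow N'\to Y$ to $N'\to Y$, and maximality of $X$ is what forces $N'$ to lie in $\mc{G}_X^\amalg$). Since you are applying the lemma as a black box that is not a defect, but it is the place the unused hypothesis is actually doing work.
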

\qed

\begin{dfn}Let $\mc{M}$ be an EOC and $X \in \mc{M}$ an object. The \emph{Taylor coefficient at $X$} functor is the functor $D^X: \Mack(\mc{M}, \mb{C}) \to \Fun(\mc{G}_X, \mb{C})$ given by $i_X^* \circ \Phi^X$.
\end{dfn}

\begin{dfn}
Let $\mc{M}^\sim$ be the maximal subgroupoid of $\mc{M}$ and define the \emph{Taylor sequence} functor 
\[D : \Mack(\mc{M}, \mb{C}) \to \Fun(\mc{M}^\sim, \mb{C})\]
by
\[\bigvee_{[X]} D^X .\]
\end{dfn}

We'll now enforce the hypothesis that $\mb{C}$ is stable for the remainder of the section. This allows us to state the following important proposition, which is a generalization of the ``norm cofibration sequence" from equivariant stable homotopy theory:
\begin{thm} \label{thm1}
Let $\mc{N}$ be a downwardly closed subcategory of an orbital $\iy$-category $\mc{M}$ and $\mc{T}$ its upwardly closed complement. Denote the restriction $ A^{eff}(i_\mc{T})^*$ and the left Kan extension $A^{eff}(i_\mc{T})_!$ respectively by $\Pi^\mc{T}$ and $\Gamma^\mc{T}$. There's a cofiber sequence of functors $\Mack(\mc{M}, \mb{C}) \to \Mack(\mc{M}, \mb{C})$
\[\Gamma^\mc{T} \Pi^\mc{T} \os{\epsilon} \to \Id  \os{\eta} \to \Xi^\mc{N} \Phi^\mc{N},\]
where $\epsilon$ and $\eta$ are the counit and unit of their respective adjunctions.
\end{thm}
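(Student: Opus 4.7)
The plan is to produce a canonical nullhomotopy of the composite $\eta \circ \epsilon: \Gamma^\mc{T} \Pi^\mc{T} E \to \Xi^\mc{N} \Phi^\mc{N} E$, then to verify that the induced comparison map from the cofiber of $\epsilon$ to $\Xi^\mc{N} \Phi^\mc{N} E$ is an equivalence by checking pointwise on $\mc{M}$. The key algebraic input is the vanishing $\Pi^\mc{T} \Xi^\mc{N} = 0$: since $\Phi^\mc{N} = A^{eff}(j_\mc{N})_!$, its right adjoint $\Xi^\mc{N}$ is the restriction $A^{eff}(j_\mc{N})^*$, and because $j_\mc{N}$ sends every object of $\mc{T}$ to $\emptyset$, one has $(\Xi^\mc{N} F)(T) = F(\emptyset) = 0$ for any $F \in \Mack(\mc{N}, \mb{C})$ and $T \in \mc{T}$. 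By the adjunction $\Gamma^\mc{T} \dashv \Pi^\mc{T}$, any natural transformation $\Gamma^\mc{T} A \to \Xi^\mc{N} B$ is canonically nullhomotopic, since its adjunct factors through $\Pi^\mc{T} \Xi^\mc{N} B = 0$. Applied naturally in $E$ to $\eta \circ \epsilon$, this produces the desired nullhomotopy and hence a canonical comparison
\[c_E : C(E) := \mathrm{cofib}\bigl(\Gamma^\mc{T} \Pi^\mc{T} E \xrightarrow{\epsilon} E\bigr) \to \Xi^\mc{N} \Phi^\mc{N} E.\]

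For the pointwise verification, the case $T \in \mc{T}$ is immediate. Here $(\Xi^\mc{N} \Phi^\mc{N} E)(T) = 0$ by the same calculation, so it suffices to show the counit $(\Gamma^\mc{T} \Pi^\mc{T} E)(T) \to E(T)$ is an equivalence. This follows from $A^{eff}(i_\mc{T})$ being fully faithful, via the standard property of Kan extensions along fully faithful functors. The fully faithfulness of $A^{eff}(i_\mc{T})$ reduces to the claim that in any span $T_1 \leftarrow W \to T_2$ in $\mc{M}^\amalg$ between objects of $\mc{T}^\amalg$, the middle term $W$ lies in $\mc{T}^\amalg$ — immediate from upward closure of $\mc{T}$, since each component of $W$ admits a map to some object of $\mc{T}$ and so is itself in $\mc{T}$.

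The substantive case is $Y \in \mc{N}$, where the problem reduces to identifying the cofiber of $(\Gamma^\mc{T} \Pi^\mc{T} E)(Y) \to E(Y)$ with $(\Phi^\mc{N} E)(Y)$; both sides I would expand via the pointwise formula for left Kan extensions. The former is a colimit of $E$ over spans $T \leftarrow W \to Y$ in $\mc{M}^\amalg$ with $T \in \mc{T}^\amalg$ (which forces $W \in \mc{T}^\amalg$ by the argument of the previous paragraph), while the latter is a colimit over pairs $(W', j_\mc{N}(W') \to Y \text{ in } A^{eff}(\mc{N}))$ with $W' \in \mc{M}^\amalg$. Writing $W' = X' \amalg T'$ with $X' \in \mc{N}^\amalg$ and $T' \in \mc{T}^\amalg$, so that $j_\mc{N}(W') = X'$, and using additivity of $E$, the diagram for $(\Phi^\mc{N} E)(Y)$ splits along the $\mc{N}/\mc{T}$ dichotomy; one expects the $\mc{N}$-part to contribute $E(Y)$ (by cofinality, since $A^{eff}(\mc{N})_{/Y}$ has a terminal object $(Y, \id_Y)$) and the $\mc{T}$-part to recover $(\Gamma^\mc{T} \Pi^\mc{T} E)(Y)$.

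Verifying this matching precisely is the heart of the proof and the main obstacle: one must exhibit a cofiber sequence at the level of the colimit diagrams rather than merely at the level of their colimits. The strategy attributed to Barwick in the acknowledgements is presumably to establish the analogous cofiber sequence first in the nonabelian derived category $\mathcal{P}_\Sigma(\mc{M}^\amalg)$ (via Lemma \ref{thm1DA}), where Kan extensions can be checked concretely on representable generators, and then to transfer the resulting cofiber sequence to $\Mack(\mc{M}, \mb{C})$ using additivization and stabilization; that transfer step is routine but the construction in $\mathcal{P}_\Sigma$ is what makes the combinatorial identification of diagrams manageable.
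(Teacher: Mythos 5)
Your opening moves are sound: the observation $\Pi^\mc{T}\Xi^\mc{N}=0$ (since $j_\mc{N}$ sends $\mc{T}$ to $\emptyset$), the resulting canonical nullhomotopy of $\eta\circ\eps$, the comparison map $c_E$, and the pointwise check on $\mc{T}$ (using that $A^{eff}(i_\mc{T})$ is fully faithful, which indeed follows from upward closure of $\mc{T}$) are all correct, and the last of these appears in the paper's analysis in a related form. But the heart of the theorem --- the identification on $Y\in\mc{N}$ --- is where your argument has a genuine gap, and the deferral does not patch it.

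Concretely, the proposed splitting of the colimit diagram for $(\Phi^\mc{N}E)(Y)$ along $W'=X'\amalg T'$ does not go through: the indexing $\iy$-category $A^{eff}(\mc{M})\X_{A^{eff}(\mc{N})}A^{eff}(\mc{N})_{/Y}$ has spans whose apex mixes components over $X'$ and over $T'$ (objects of $\mc{T}^\amalg$ admit maps into $\mc{N}^\amalg$), so the diagram is not a coproduct or extension of an ``$\mc{N}$-part'' and a ``$\mc{T}$-part'' in any evident way. This is exactly why --- as you sense --- one cannot merely split colimits, and the paper does not attempt this. Instead, its proof of Lemma \ref{thm1DA} (Appendix B) works on corepresentable Mackey functors and directly analyzes the homotopy \emph{fiber} $\mho_{W,f}$ of the counit $z_!z^*\Map(X,-)\to\Map(X,-)$ via twisted arrow category combinatorics, showing that this fiber is contractible precisely over the spans $X\lef Y\to W$ with $Y\in\mc{T}^\amalg$ and empty otherwise; this identifies the counit as a summand inclusion of connected components of mapping spaces, from which the cofiber sequence at the $\CMon$ level is immediate. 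Your diagram-splitting proposal is a genuinely different (and, as written, broken) strategy from this fiberwise analysis.

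Your description of the bootstrap from Lemma \ref{thm1DA} to the general stable $\mb{C}$ is also off. The paper does not ``additivize and stabilize'' a $\CMon$-level cofiber sequence. It instead runs a universal-property argument: the equivalence $\Fun^L(\mc{D}A(\mc{M}),\mb{C})\simeq\Mack(\mc{M},\mb{C})$ identifies $\Xi^\mc{N}$ with precomposition by the localization $\Phi^\mc{N}:\mc{D}A(\mc{M})\to\mc{D}A(\mc{N})$, which is automatically fully faithful with essential image the colimit-preserving functors sending $\Phi^\mc{N}$-equivalences to equivalences; stability of $\mb{C}$ then converts this to ``functors killing the image of $\Gamma^\mc{T}$,'' i.e.\ $\Mack_\mc{N}(\mc{M},\mb{C})$. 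This proves Corollary \ref{mackn0}, which the paper notes is formally equivalent to Theorem \ref{thm1}. That equivalence with Corollary \ref{mackn0} is a cleaner organizing device than the pointwise comparison map $c_E$, and you'd do well to reframe around it: then you only need the $\CMon$ statement plus abstract localization theory, and no pointwise verification on $\mc{N}$ is required at all in the general case.
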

Theorem \ref{thm1} has the following important equivalent form, whose theme is that a Mackey functor contains no secret data not detected by its values:
\begin{cor} \label{mackn0}
$\Xi^\mc{N}$ is fully faithful, and its essential image is the category $\Mack_\mc{N}(\mc{M}, \mb{C})$ of Mackey functors on $\mc{M}$ supported on $\mc{N}$.
\end{cor}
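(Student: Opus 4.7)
My plan is to extract both assertions from the cofiber sequence $\Gamma^\mc{T} \Pi^\mc{T} \to \Id \to \Xi^\mc{N} \Phi^\mc{N}$ of Theorem \ref{thm1}. The key technical input I need is the vanishing $\Pi^\mc{T} \Xi^\mc{N} = 0$, or equivalently (by passing to left adjoints) $\Phi^\mc{N} \Gamma^\mc{T} = 0$. Once this is in hand, everything else is essentially formal.

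To establish the vanishing, I would argue as follows. Since $\Phi^\mc{N}$ is defined as the left Kan extension $A^{eff}(j_\mc{N})_!$, its right adjoint $\Xi^\mc{N}$ is the restriction $A^{eff}(j_\mc{N})^*$. Given $X \in \mc{T}$, downward-closure of $\mc{N}$ forces $\Hom_\mc{M}(Y, X) = \emp$ for every $Y \in \mc{N}$, so the representable presheaf on $X$ restricts to the empty presheaf on $\mc{N}$; in other words, $j_\mc{N}(X) = \emp$, which is the zero object of $A^{eff}(\mc{N}^\amalg)$. Additivity of $E \in \Mack(\mc{N}, \mb{C})$ then yields $(\Xi^\mc{N} E)(X) \simeq E(\emp) \simeq 0$, confirming that $\Pi^\mc{T} \Xi^\mc{N} = 0$.

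Given this, I apply $\Phi^\mc{N}$ to the cofiber sequence; the first term drops out, so $\Phi^\mc{N} \eta: \Phi^\mc{N} \to \Phi^\mc{N} \Xi^\mc{N} \Phi^\mc{N}$ is an equivalence. The triangle identity $(\epsilon \Phi^\mc{N}) \circ (\Phi^\mc{N} \eta) = \Id_{\Phi^\mc{N}}$ then forces $\epsilon \Phi^\mc{N}$ to be an equivalence as well. Since $j_\mc{N} \circ i_\mc{N} = \Id_{\mc{N}^\amalg}$ gives $\Phi^\mc{N} \circ A^{eff}(i_\mc{N})_! \simeq \Id$, the functor $\Phi^\mc{N}$ is essentially surjective, so the counit $\epsilon: \Phi^\mc{N} \Xi^\mc{N} \to \Id$ is an equivalence everywhere, proving $\Xi^\mc{N}$ is fully faithful.

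For the essential image: the vanishing above already shows $\Xi^\mc{N} E \in \Mack_\mc{N}(\mc{M}, \mb{C})$ for every $E$. Conversely, if $F$ is supported on $\mc{N}$, meaning $\Pi^\mc{T} F = 0$, then $\Gamma^\mc{T} \Pi^\mc{T} F = 0$, and the cofiber sequence forces the unit $F \to \Xi^\mc{N} \Phi^\mc{N} F$ to be an equivalence, placing $F$ in the essential image. The only real substance in this proof is the initial vanishing of $\Xi^\mc{N}$ on $\mc{T}$, which I anticipate as the main (though modest) obstacle; the subsequent deductions are formal manipulations with the adjunction and the cofiber sequence.
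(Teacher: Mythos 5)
Your proof is correct and follows essentially the same strategy as the paper: extract both claims formally from the cofiber sequence of Theorem \ref{thm1}, using the observation that $\Xi^\mc{N}$ kills $\mc{T}$ (via $j_\mc{N}(X) = \emptyset$ for $X \in \mc{T}$ and additivity). The only difference is presentational and dual in flavor: the paper applies the unit $\eta$ to $M = \Xi^\mc{N} N$, invokes the triangle identity, and finishes via conservativity of $\Xi^\mc{N}$, whereas you post-compose the cofiber sequence with $\Phi^\mc{N}$, invoke the triangle identity, and finish via essential surjectivity of $\Phi^\mc{N}$; both ingredients flow from $j_\mc{N} \circ i_\mc{N} = \Id$, so the two arguments are mirror images of one another.
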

\begin{proof}
If $M \in \Mack_\mc{N}(\mc{M}, \mb{C})$, then evidently $\Pi^\mc{T} M$ is zero, and so the cofiber sequence of Theorem \ref{thm1} collapses to an equivalence
\[M \simeq \Xi^\mc{N} \Phi^\mc{N} M,\]
which establishes the essential image of $\Xi^\mc{N}$.

Since $\Xi^\mc{N}$ is visibly conservative, it follows for $N \in \Mack(\mc{N}, \mb{C})$, setting $M = \Xi^\mc{N} N$, that 
\[N \simeq \Phi^\mc{N} \Xi^\mc{N} N,\]
and the full faithfulness of $\Xi^\mc{N}$ follows by adjunction.
\end{proof}
\begin{rem}
Observe that Corollary \ref{mackn0} also implies Theorem \ref{thm1}. Indeed, it follows abstractly that the functor 
\[M \mapsto \cof(\eps_M)\]
is the localization into $\Mack_\mc{N}(\mc{M}, \mb{C})$. On the other hand, assuming that $\Xi^\mc{N}$ is fully faithful, $\Xi^\mc{N} \Phi^\mc{N}$ is the localization into its essential image. By Corollary \ref{mackn0}, these two localizations coincide, yielding \ref{thm1}.
\end{rem}
\begin{dfn}
Let $\mc{D}A(\mc{M})$ denote the nonabelian derived category of $A^{eff}(\mc{M})$: the category of product-preserving functors $A^{eff}(\mc{M}) \to \Top$ \cite[Definition 5.5.8.8]{HTT}. Equivalently, $\mc{D}A(\mc{M})$ is the category of Mackey functors on $\mc{M}$ valued in the category $\CMon$ of commutative monoid spaces (see, for instance, \cite[Remark 2.7]{Gla16}).

Let $\mb{C}$ and $\mb{D}$ be presentable $\iy$-categories. Recall that the category of coproduct-preserving functors from $\mb{C}$ to $\mb{D}$ is denoted $\Fun^\amalg(\mb{C}, \mb{D})$, and that the category of functors from $\mb{C}$ to $\mb{D}$ that preserve all colimits is denoted $\Fun^L(\mb{C}, \mb{D})$. Now for presentable $\mb{C}$, by \cite[Proposition 5.5.8.15]{HTT}, there's an equivalence of categories
\[\Fun^L(\mc{D}A(\mc{M}), \mb{C}) \simeq \Fun^\amalg(A^{eff}(\mc{M}), \mb{C}).\]
If $\mb{C}$ is, in addition, semiadditive, this can be written as an equivalence
\[\Fun^L(\mc{D}A(\mc{M}), \mb{C}) \simeq \Mack(\mc{M}, \mb{C}). \tag*{$(\ast)$}\]
\end{dfn}
\begin{lem} \label{thm1DA}
Theorem \ref{thm1} holds when $\mb{C} = \CMon$. 
\end{lem}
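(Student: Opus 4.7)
The plan is to reduce the claimed cofiber sequence to representable Mackey functors $h_X$ and then verify it pointwise via a direct-sum decomposition induced by the partition $\mc{M}^\amalg = \mc{N}^\amalg \amalg \mc{T}^\amalg$.

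First I would observe that all colimits in $\mc{D}A(\mc{M})$ are computed pointwise in $\CMon$: sifted colimits are pointwise because they commute with products, finite direct sums (which agree with finite coproducts by semiadditivity) are pointwise because finite products commute with direct sums in $\CMon$, and every colimit in a category with finite coproducts decomposes as a reflexive coequalizer of such. Consequently, the restriction functors $\Pi^\mc{T}$ and $\Xi^\mc{N}$ preserve all colimits, while $\Ga^\mc{T}$ and $\Phi^\mc{N}$ do so as left adjoints; the three endofunctors in the proposed sequence are therefore colimit-preserving. Since $\mc{D}A(\mc{M})$ is generated under colimits by the representables $h_X$ for $X \in \mc{M}$ and cofiber sequences are preserved by colimits, the problem reduces to verifying, at each $V \in \mc{M}$, that $\Ga^\mc{T}\Pi^\mc{T} h_X(V) \to h_X(V) \to \Xi^\mc{N}\Phi^\mc{N} h_X(V)$ is a cofiber sequence in $\CMon$.

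For the right-hand term, the fact that left Kan extensions preserve Yoneda images gives $\Phi^\mc{N} h_X \simeq h_{j_\mc{N}(X)}$ in $\mc{D}A(\mc{N})$, so $\Xi^\mc{N}\Phi^\mc{N} h_X(V) = \Map_{A^{eff}(\mc{N})}(j_\mc{N}(X), j_\mc{N}(V))$; this vanishes unless both $X$ and $V$ lie in $\mc{N}$, in which case it equals $\Map_{A^{eff}(\mc{N})}(X, V)$. For the left-hand term, I would introduce the assignment
\[M^\mc{T}(X, -): V \mapsto \{\text{spans } X \leftarrow Z \to V \text{ in } \mc{M}^\amalg \text{ with } Z \in \mc{T}^\amalg\}.\]
Upward closure of $\mc{T}$ ensures $\mc{T}^\amalg$ is stable under pullback from $\mc{M}^\amalg$, so $M^\mc{T}(X, -)$ is functorial in $V$ and manifestly product-preserving; moreover $M^\mc{T}(X, T) = h_X(T)$ for $T \in \mc{T}$, since any span into $\mc{T}$ has middle object in $\mc{T}^\amalg$. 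Parametrizing by $Z$ and applying co-Yoneda, one obtains $M^\mc{T}(X, -) \simeq \colim{(Z \in \mc{T}^\amalg, f: Z \to X)} h_Z$, so $M^\mc{T}(X, -)$ lies in the essential image of the fully faithful functor $\Ga^\mc{T} = (A^{eff}(i_\mc{T}))_!$ (which is fully faithful because $A^{eff}(i_\mc{T})$ is, again by upward closure of $\mc{T}$). Combined with $M^\mc{T}(X, -)|_\mc{T} = \Pi^\mc{T} h_X$, this forces $M^\mc{T}(X, -) \simeq \Ga^\mc{T}\Pi^\mc{T} h_X$, with the counit realized as the tautological inclusion.

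To conclude, the canonical decomposition $Z \simeq Z_\mc{N} \amalg Z_\mc{T}$ of any $Z \in \mc{M}^\amalg$ splits every span $X \leftarrow Z \to V$ as a direct sum of its $\mc{N}$- and $\mc{T}$-parts, giving a direct-sum decomposition of commutative monoids
\[h_X(V) \simeq \Map^\mc{N}(X, V) \oplus M^\mc{T}(X, V),\]
in which the $\mc{N}$-summand is, in the only case where either end of the sequence is nonzero (namely $X, V \in \mc{N}$), precisely $\Map_{A^{eff}(\mc{N})}(X, V) = \Xi^\mc{N}\Phi^\mc{N} h_X(V)$. The pointwise sequence thereby becomes the split short exact sequence $M^\mc{T} \hookrightarrow M^\mc{T} \oplus \Map^\mc{N} \twoheadrightarrow \Map^\mc{N}$ in $\CMon$, visibly a cofiber sequence. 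The main obstacle in this plan is the identification of $\Ga^\mc{T}\Pi^\mc{T} h_X$ with $M^\mc{T}(X, -)$: this requires recognizing the latter as being left Kan extended from $\mc{T}$, via the co-Yoneda reparametrization that concentrates the indexing data in $\mc{T}^\amalg$, and then invoking the full faithfulness of $\Ga^\mc{T}$.
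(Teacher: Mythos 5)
Your overall architecture matches the paper's: reduce to corepresentables, identify the counit $\Gamma^\mc{T}\Pi^\mc{T}h_X \to h_X$ as the inclusion of spans whose middle term lies in $\mc{T}^\amalg$, and observe that the decomposition $Z \simeq Z_\mc{N} \amalg Z_\mc{T}$ splits each mapping space into a direct sum so that the sequence is pointwise split exact. Your endgame, and your observation that $\Xi^\mc{N}\Phi^\mc{N} h_X(V) \simeq \Map_{A^{eff}(\mc{N})}(j_\mc{N}X, j_\mc{N}V)$ by preservation of corepresentables under left Kan extension, are both correct.

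However, the step you flag yourself as the ``main obstacle'' is genuinely gapped, and it is precisely the step that occupies nearly all of the paper's Appendix~B. You assert that ``parametrizing by $Z$ and applying co-Yoneda'' gives
\[M^\mc{T}(X, -) \simeq \colim_{(Z \in \mc{T}^\amalg,\, f: Z \to X)} h_Z,\]
but a direct application of co-Yoneda gives a colimit indexed by the \emph{category of elements} of $M^\mc{T}(X,-)$, whose objects are pairs $(V, X \leftarrow Z \to V)$ with $V$ ranging over all of $A^{eff}(\mc{M})$, and hence a colimit of $h_V$ for $V$ not confined to $\mc{T}^\amalg$. Passing from that diagram to the one you write down is a nontrivial cofinality claim; it is not a ``reparametrization.'' Moreover, even the definition of $M^\mc{T}(X,-)$ as an object of $\mc{D}A(\mc{M})$ --- a coherently functorial, product-preserving functor out of the quasi-category $A^{eff}(\mc{M})$, whose simplices are staircase diagrams with nontrivial higher data --- is not something one gets for free from the span-level description; it has to be constructed. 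The paper's Lemmas~\ref{lem:C_XD} and~\ref{lem:Kantyp} do exactly this: they build the unstraightened left Kan extension $\mb{C}_{X/\mb{D}/}$ as a simplicial set, identify the fiber of the counit over a span $f: X \leftarrow Y \to W$ as a concrete simplicial set $\mho_{W,f}$, and then prove by an explicit two-step simplicial deformation retraction that $\mho_{W,f}$ is contractible when $Y \in \mc{T}^\amalg$ and empty otherwise. That contractibility argument is the cofinality input you are implicitly invoking, and it does not follow from general nonsense. So the proposal correctly identifies \emph{what} needs to be proved and \emph{why} the sequence will then split, but the identification $\Gamma^\mc{T}\Pi^\mc{T}h_X \simeq M^\mc{T}(X,-)$ is asserted rather than established, and that is the content of the lemma.
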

Before proving Lemma \ref{thm1DA}, let's deduce Theorem \ref{thm1} from Lemma \ref{thm1DA}. We know
\[\Phi^\mc{N} : \mc{D}A(\mc{M}) \to \mc{D}A(\mc{N}) \]
is a localization. Under the equivalence of $(\ast)$, $\Xi^\mc{N}$ corresponds to
\[(\Phi^\mc{N})^* : \Fun^L(\mc{D}A(\mc{N}), \mb{C}) \to \Fun^L(\mc{D}A(\mc{M}), \mb{C}),\]
and by the universal property of a localization, $(\Phi^\mc{N})^*$ is fully faithful, and objects of its essential image are functors which map $\Phi^\mc{N}$-equivalences to equivalences. 

Now here's where we use the stability of $\mb{C}$: since a morphism in a stable $\iy$-category is an equivalence if and only if its fiber is zero, the objects of the essential image of $(\Phi^\mc{N})^*$ are equivalently those functors which map objects in the essential image of 
\[\Gamma^\mc{T} :  \mc{D}A(\mc{T}) \to \mc{D}A(\mc{M})\]
 to $0 \in \mb{C}$. But since the equivalence $(\ast)$ is given by the Yoneda embedding, and since the diagram
\[\begin{tikzcd}
A^{eff}(\mc{T}) \ar{d}[left]{A^{eff}(i_{\mc{T}})} \ar{r}{\sim} & \mc{D}A(\mc{T}) \ar{d}{\Gamma^\mc{T}} \\
A^{eff}(\mc{M}) \ar{r}{\sim} & \mc{D}A(\mc{M}) \end{tikzcd}
\]
commutes, these correspond under $(\ast)$ to the objects of $\Mack_{\mc{N}}(\mc{M}, \mb{C}).$ This proves Corollary \ref{mackn0}, and therefore Theorem \ref{thm1}.
\begin{proof}[Proof of Lemma \ref{thm1DA}.]
The proof of this lynchpin lemma is very technical, and so we've relegated it to Appendix \ref{app:lem}. It's not required reading for those who don't care to learn to make a very specific kind of sausage, but we note that it's our main point of contact with the combinatorics of the effective Burnside category.
\end{proof}

Let $\mc{M}$ be a EOC. For each $X \in \mc{M}$, we may define functors
\[R^X = \Xi^X \circ (i_X)_*\] 
and
\[L^X = \Xi^X \circ (i_X)_!,\]
where $(i_X)_!$ and $(i_X)_*$ are left and right Kan extension respectively. Observe that $(i_X)_!$ and $(i_X)_*$ are fully faithful, since they're Kan extensions along a fully faithful functor. Since we've already seen that $\Xi^X$ is fully faithful, we conclude that both $L^X$ and $R^X$ are fully faithful. Moreover, $R^X$ is right adjoint to the Taylor coefficient functor $D^X$.

Similarly, we can define
\[R = \bigvee_{[X]} R^X\]
and
\[L = \bigvee_{[X]} L^X\]
and $R$ is right adjoint to the Taylor sequence functor $D$.
\begin{prop} \label{prop:sectLD}
$L$ is a section of $D$; that is, $D \circ L$ is homotopic to the identity.
\end{prop}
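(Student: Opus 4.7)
I would begin by decomposing the composite $D \circ L$ via the wedge-sum structure of its source and target: it splits as a matrix of functors $\{D^X L^Y : \Fun(\mc{G}_Y, \mb{C}) \to \Fun(\mc{G}_X, \mb{C})\}_{[X], [Y]}$, so the claim reduces to verifying $D^X L^X \simeq \id$ for each $[X]$ and $D^X L^Y \simeq 0$ whenever $[X] \neq [Y]$. The diagonal case is formal: unwinding gives $D^X L^X = i_X^* \Phi^X \Xi^X (i_X)_!$. Corollary \ref{mackn0} yields $\Xi^X$ fully faithful, hence $\Phi^X \Xi^X \simeq \id$; meanwhile $(i_X)_!$ is fully faithful as a Kan extension along a fully faithful functor, so $i_X^*(i_X)_! \simeq \id$.

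For the off-diagonal vanishing I would split on the poset relationship. When $[Y] < [X]$, I would use that $\mc{M}_{\leq Y} \subsetneq \mc{M}_{\leq X}$ to factor $\Xi^Y \simeq \Xi^X \circ \Xi^{Y \subset X}$, where $\Xi^{Y \subset X}$ is the extension by zero along $\mc{M}_{\leq Y} \hookrightarrow \mc{M}_{\leq X}$. Applying $\Phi^X$ collapses the outer $\Xi^X$, yielding $\Phi^X L^Y \simeq \Xi^{Y \subset X}(i_Y)_!$---a Mackey functor on $\mc{M}_{\leq X}$ supported on $\mc{M}_{\leq Y}$. Because the isomorphism class of $X$ is disjoint from $\mc{M}_{\leq Y}$ (since $[X] > [Y]$), restricting along $i_X$ annihilates this, delivering $D^X L^Y \simeq 0$.

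The remaining subcase $[Y] \not\leq [X]$ is the technical heart of the proof. Here I would target the stronger statement $\Phi^X L^Y \simeq 0$, by showing that $L^Y N$ lies in the kernel of $\Phi^X$. Writing $\mc{N}' = \mc{M}_{\leq X}$ and $\mc{T}' = \mc{M} \setminus \mc{N}'$, Corollary \ref{mackn0} together with Theorem \ref{thm1} identifies $\ker(\Phi^X)$ with the essential image of $\Gamma^{\mc{T}'}$, so the task reduces to showing that $L^Y N$ is left Kan extended from $\mc{T}'$---equivalently, that the counit $\Gamma^{\mc{T}'} \Pi^{\mc{T}'} L^Y N \to L^Y N$ is an equivalence. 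The hypothesis $[Y] \not\leq [X]$ forces $Y \in \mc{T}'$, and since $L^Y N$ is freely generated (via $(i_Y)_!$ and $\Xi^Y$) from data placed at the single orbit $Y$ which already lies in $\mc{T}'$, the equivalence is plausible.

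I expect this last verification to be the main obstacle. It should follow from a cofinality analysis in $A^{eff}(\mc{M})$ in the spirit of Lemma \ref{thm1DA}: for each $Z \in \mc{N}'$, one exhibits a cofinal subcategory of the comma category computing $\Gamma^{\mc{T}'} \Pi^{\mc{T}'} L^Y N(Z)$, using that every span $Y \leftarrow U \to Z$ contributing to $L^Y N(Z)$ factors through an object of $\mc{T}' \cap \mc{M}_{\leq Y}$---specifically, through $Y$ itself, which is maximal in $\mc{M}_{\leq Y}$.
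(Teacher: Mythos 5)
Your overall plan — decompose $D \circ L$ into a matrix of functors $D^X L^Y$ and prove the diagonal entries are identities and the off-diagonal entries vanish — is reasonable, and the diagonal case and the $[Y] < [X]$ case are handled correctly using $\Phi^X \Xi^X \simeq \id$ (from Corollary \ref{mackn0}) and $i_X^*(i_X)_! \simeq \id$. But you've correctly identified that the case $[Y] \not\leq [X]$ is where the real content lies, and your argument there is only a sketch. The precise obstruction is that $L^Y = \Xi^Y \circ (i_Y)_!$ mixes a \emph{right} adjoint (restriction $\Xi^Y = (A^{eff}(j_{\mc{M}_{\leq Y}}))^*$) with a \emph{left} Kan extension, so the fact that $(i_Y)_!$ is "generated at $Y \in \mc{T}'$" does not automatically pass through $\Xi^Y$. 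What you would actually need is a base-change (Beck–Chevalley) statement of the shape $\Phi^{\mc{N}'} \circ \Xi^Y \simeq \Xi^{\mc{N}'' \subset \mc{N}'} \circ \Phi^{\mc{N}''}$, where $\mc{N}'' = \mc{M}_{\leq X} \cap \mc{M}_{\leq Y}$; this is plausible, but it is a nontrivial assertion about the interaction of left Kan extensions along $A^{eff}(j)$'s with restrictions along $A^{eff}(j)$'s, and "a cofinality analysis in the spirit of Lemma \ref{thm1DA}" is real work, not a formality.

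The paper's proof sidesteps this entirely by inducting on the number of isomorphism classes of $\mc{M}$. Fixing a \emph{maximal} $X$, it verifies directly that $D \circ L$ is the identity on the $\mc{G}_X$ summand — easy precisely because for maximal $X$ one has $\Phi^X = \Xi^X = \id$, so $D^X L^X = i_X^*(i_X)_!$ and $D^Z L^X$ vanishes since $(i_X)_!$ is left Kan extended from $\mc{G}_X \subset \mc{T}_Z$ (no right adjoint intervenes). For the remaining summands it uses the cofiber sequence $L^X \to L \to \Phi^{\mc{M}_{<X}} L$ from Theorem \ref{thm1} to produce a commuting ladder between $(L, D)$ for $\mc{M}$ and for the smaller EOC $\mc{M}_{<X}$, and invokes the inductive hypothesis. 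The net effect is that the paper only ever has to compute $D^Z L^X$ for $X$ maximal, which is exactly the case your decomposition handles easily, and the recursion quietly does the rest. So the induction is not just organizational convenience: it is what lets one avoid proving the base-change fact that your case $[Y] \not\leq [X]$ requires. If you wish to keep your matrix-decomposition framing, the cleanest fix is to graft on the same induction — prove only $D^X L^Y$ for $X$ maximal, then reduce to $\mc{M}_{< X}$ via Theorem \ref{thm1} — rather than attempting the general off-diagonal vanishing head-on.
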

\begin{proof}
We'll use induction on the number of isomorphism classes of objects in $\mc{M}$. If $\mc{M}$ is a groupoid, both $L$ and $D$ are already the identity. In general, let $X$ be a maximal object of $\mc{M}$. It's clear that $D \circ L$ is homotopic to the identity when restricted to $\mc{G}_X$. 

Then the morphism
\[(i_X)_! (i_X)^* L \to L\]
is equivalent to the summand inclusion
\[L^X \to \bigvee_{[Y]} L^Y\]
and therefore by Theorem \ref{thm1}, we have a cofiber sequence
\[L^X \to L \to \Phi^{\mc{M}_{< X}} L\]
which shows that the left square in the diagram
\[\begin{tikzcd}
\Fun(\mc{M}^\sim, \mb{C}) \ar{r}{L} \ar{d} & \Mack(\mc{M}, \mb{C}) \ar{d}{\Phi^{\mc{M}_{<X}}} \ar{r}{D} & \Fun(\mc{M}^\sim, \mb{C}) \ar{d} \\
\Fun(\mc{M}_{<X}^\sim, \mb{C}) \ar{r}{L} & \Mack(\mc{M}_{<X}, \mb{C}) \ar{r}{D} & \Fun(\mc{M}^\sim_{<X}, \mb{C}  )
\end{tikzcd}\]
commutes. The right square commutes by construction, and the bottom composite is homotopic to the identity by the induction hypothesis. By circumnavigating the diagram, we conclude that $D \circ L$ is homotopic to the identity when restricted to $\mc{G}_Y$ for any $Y \neq X$.
\end{proof}
The following is our Arone-Ching theorem in its general form.
\begin{thm}\label{thm:ac}
Let $\mc{M}$ be an epiorbital category. Then the adjunction $(D, R)$ is comonadic.
\end{thm}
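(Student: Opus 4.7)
The plan is to apply the Barr-Beck-Lurie comonadicity theorem (the dual of \cite[Theorem 4.7.3.5]{HTT}) to the adjunction $D \dashv R$. This reduces the claim to two conditions: (a) $D$ is conservative, and (b) $\Mack(\mc{M}, \mb{C})$ admits totalizations of $D$-split cosimplicial objects and $D$ preserves them. Existence of such totalizations is automatic from presentability, so the substance lies in (a) and the preservation half of (b); both can be attacked by induction on the number of isomorphism classes of $\mc{M}$, with the base case ($\mc{M}$ a groupoid, so $D \simeq \Id$ by Theorem \ref{thm:Gadd}) trivial.

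For (a), I first reduce to $\mc{M}$ connected by decomposing both sides of the adjunction over the connected components. A pushout argument just like the one in the proof of Lemma \ref{multifinal} then shows that $\mc{P}_\mc{M}$ has a unique maximum $X$, so in particular $\mc{M}_{\leq X} = \mc{M}$, $\Phi^X \simeq \Id$, and $D^X(M) \simeq M|_{\mc{G}_X}$. If $D(M) \simeq 0$ then $M|_{\mc{G}_X} \simeq 0$, and applying Theorem \ref{thm1} with $\mc{T} = \mc{G}_X$ and $\mc{N} = \mc{M} \setminus \mc{G}_X$ collapses the cofiber sequence to $M \simeq \Xi^\mc{N} \Phi^\mc{N} M$. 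For every $Y \in \mc{N}$, the inclusion $\mc{M}_{\leq Y} \subseteq \mc{N}$ factors the retraction $j^\mc{M}_{\mc{M}_{\leq Y}}$ through $j^\mc{M}_\mc{N}$, so $\Phi^Y_\mc{M} \simeq \Phi^Y_\mc{N} \circ \Phi^\mc{N}$ and hence $D^Y_\mc{M}(M) \simeq D^Y_\mc{N}(\Phi^\mc{N} M)$. Thus $D_\mc{N}(\Phi^\mc{N} M) \simeq 0$, and the inductive hypothesis for the strictly smaller EOC $\mc{N}$ forces $\Phi^\mc{N} M \simeq 0$, hence $M \simeq 0$.

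Condition (b) is the main obstacle. For a $D$-split cosimplicial $X^\bullet$ with augmentation $X^{-1}$, split-cosimplicial absoluteness yields $\text{Tot}(D X^\bullet) \simeq D(X^{-1})$; by the conservativity from (a), the preservation claim reduces to showing that the canonical map $X^{-1} \to \text{Tot}(X^\bullet)$ becomes an equivalence after $D$. Evaluating $D^Y$ and using the pointwise cofiber sequence of Theorem \ref{thm1}, the obstruction localizes on the Kan extensions $\Gamma^\mc{T}$, since the restrictions $\Pi^\mc{T}$ preserve all limits. I expect to handle this by again inducting on the number of isomorphism classes: $D$-splitness restricts compatibly to the upwardly closed orbital subcategories $\mc{T} \subsetneq \mc{M}$, so the inductive comonadicity of $D_\mc{T}$ gives enough grip on the interaction of $\Gamma^\mc{T}$ with these specific totalizations. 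The technical heart of the argument is the combinatorics of the effective Burnside category encoded in Lemma \ref{thm1DA}, and this is where I expect the main difficulty to lie.
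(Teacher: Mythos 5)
Your proposal takes a genuinely different route from the paper's. The paper does not split the Barr-Beck-Lurie criterion into conservativity plus preservation of $D$-split totalizations; instead it proves directly that the comparison unit $t : \Id \to \mb{Tot}(\text{Cobar}(R, DR, D))$ is an equivalence, by inducting along downwardly closed subcategories $\mc{N}$ and using the cofiber sequence $L^X D^X \to \Xi^{\mc{N}}\Phi^{\mc{N}} \to \Xi^{\mc{N}'}\Phi^{\mc{N}'}$ from Theorem \ref{thm1}. The lynchpin is the identity $D^X = ev_X D$, which makes $D^X\,\text{Cobar}(R,DR,D) = ev_X\,\text{Cobar}(DR,DR,D)$ split via an extra codegeneracy; its totalization then collapses without any abstract convergence argument. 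This observation is specific to the cobar cosimplicial object and entirely sidesteps the general question your condition (b) demands.

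There are two gaps in your write-up. The first is minor and fixable: a connected epiorbital category need not have a unique maximum in $\mc{P}_{\mc{M}}$. For instance, the category with three objects $A$, $B$, $C$ and single non-identity morphisms $A \to C$ and $B \to C$ is a connected EOC (morphisms are vacuously epi, and the only spans and parallel pairs have evident pushouts and coequalizers) with two maximal objects. The dual of the pushout argument in Lemma \ref{multifinal} would require $\mc{M}^{\op}$ to be an EOC, which it is not. The conservativity argument can still be run by picking \emph{any} maximal $X$: $\mc{G}_X$ is upwardly closed, $D^X(M) \simeq M|_{\mc{G}_X}$ for maximal $X$, and the rest of your induction goes through unchanged.

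The second gap is the one you flag yourself, and it is more serious than a matter of filling in combinatorics. Your plan for condition (b) invokes "the inductive comonadicity of $D_{\mc{T}}$" for upwardly closed $\mc{T}$, but the factorization you use for (a), $\Phi^Y_{\mc{M}} \simeq \Phi^Y_{\mc{N}} \circ \Phi^{\mc{N}}$, works precisely because $\mc{N}$ is downwardly closed, so $\mc{M}_{\leq Y} \subseteq \mc{N}$ for $Y \in \mc{N}$. For $Y \in \mc{T}$ upwardly closed, $\mc{M}_{\leq Y}$ generically escapes $\mc{T}$, so there is no analogous restriction of $D^Y_{\mc{M}}$ to a Taylor coefficient functor intrinsic to $\mc{T}$, and hence no obvious sense in which a $D_{\mc{M}}$-split cosimplicial object becomes $D_{\mc{T}}$-split. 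Moreover, you are trying to establish preservation of totalizations of \emph{all} $D$-split cosimplicial objects, which is strictly stronger than what is needed and harder to reach by induction; the left Kan extension $\Gamma^{\mc{T}}$ involves colimits over nontrivial groupoids (homotopy orbits), which do not commute with general totalizations. What makes the paper's argument close is precisely that it never confronts this general statement: it exploits the extra-codegeneracy splitting that $D^X$ induces on the \emph{specific} cobar object, after isolating the top stratum via Theorem \ref{thm1}. Without that observation, your step (b) is not a routine technicality but a missing idea.
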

\begin{proof}
$(D, R)$ is comonadic if and only if the natural transformation
\[t : \Id \to \mb{Tot}(\text{Cobar}(R, DR, D))\]
is an equvalence. We'll closely follow Arone and Ching's proof in \cite{AC15}. This involves showing, for each downwardly-closed subcategory $\mc{N}$ of $\mc{M}$, that the natural map 
\[t^{\mc{N}}: \Xi^{\mc{N}} \Phi^{\mc{N}} \to  \mb{Tot}(\Xi^{\mc{N}} \Phi^{\mc{N}} \text{Cobar}(R, DR, D))\]
is an equivalence, by induction on the number of isomorphism classes of objects in $\mc{N}$. Since $\Xi^{\mc{M}} \Phi^{\mc{M}}$ is the identity functor, this will give the result.

So assume that $t^{\mc{P}}$ is an equivalence for all $\mc{P}$ with at most $k$ isomorphism classes of objects, and suppose $\mc{N}$ has $k + 1$ isomorphism classes of objects. Let $X$ be a maximal object of $\mc{N}$ and let $\mc{N}' = \mc{N} \setminus \mc{G}_X$ be the result of removing the isomorphism class of $X$. The cofiber sequence of Theorem \ref{thm1} gives a cofiber sequence of functors
\[L^X D^X \to \Xi^\mc{N} \Phi^\mc{N} \to \Xi^{\mc{N}'} \Phi^{\mc{N}'},\]
which in turn gives a map of cofiber sequences
\[\begin{tikzcd}
L^X D^X \ar{r}{t^X} \ar{d} & \mb{Tot}(L^X D^X \text{Cobar}(R, DR, D)) \ar{d} \\
\Xi^\mc{N} \Phi^\mc{N} \ar{r}{t^\mc{N}} \ar{d} & \mb{Tot}(\Xi^{\mc{N}} \Phi^{\mc{N}} \text{Cobar}(R, DR, D)) \ar{d} \\
\Xi^{\mc{N}'} \Phi^{\mc{N}'} \ar{r}{t^{\mc{N}'}} &  \mb{Tot}(\Xi^{\mc{N}'} \Phi^{\mc{N}'} \text{Cobar}(R, DR, D)).
\end{tikzcd}\]
By the induction hypothesis, $t^{\mc{N}'}$ is an equivalence, so it'll suffice to show that $t^X$ is an equivalence. This also starts the induction, since $t^{\mc{N}} = t^X$ if $\mc{N} = \mc{G}_X$ is a connected groupoid. But now we observe that
\[D^X = ev_X D\]
and so
\begin{align*} \mb{Tot}(L^X D^X \text{Cobar}(R, DR, D)) & \simeq \mb{Tot}(L^X ev_X D \text{Cobar}(R, DR, D)) \\
& \simeq \mb{Tot}(L^X ev_X \text{Cobar}(DR, DR, D)) \\
& \simeq L^X ev_X D \\
& \simeq L^X D^X \end{align*}
by the usual extra codegeneracy argument. This completes the proof.
\end{proof}
The next section aims to characterize the comonad $DR$.

\section{Categories stratified along a poset}\label{sec:strat}
The categories $\Mack(\mc{M})$ for $\mc{M}$ a EOC, along with many other categories occuring in nature, share a significant structural property: any object of $\Mack(\mc{M})$ can be torn open by a series of fracture squares. More precisely, suppose that $\mc{N}$ is a downwardly-closed subcategory, $\mc{T}$ is its upwardly-closed complement and $X \in \Mack(\mc{M})$. Then we'll see that there's a pullback square
\[\begin{tikzcd}
X \ar{r} \ar{d} & (i_{\mc{T}})_* (i_{\mc{T}})^* X \ar{d} \\
\Phi^{\mc{N}} X \ar{r} & \Phi^{\mc{N}} (i_{\mc{T}})_* (i_{\mc{T}})^* X
\end{tikzcd}\]
To build a theory of how $X$ might be recovered from such data, it'll be helpful to widen our scope. First it's important to advertise a potential point of significant notational confusion.
\begin{war}
When we regard a poset as a category in this paper, we will use the \emph{opposite} of the usual convention that there is a morphism from $x$ to $y$ if $x \leq y$. For us, the space of morphisms from $x$ to $y$ will be contractible if $x \geq y$ and empty otherwise. We adopt this strange convention in order to preserve intuitions about size of objects in our chief motivating examples of posets: the posets of isomorphism classes of objects of the EOCs $\mc{O}_G$ and $\F_s^{\leq n}$.
\end{war}
\begin{dfn}
Let $\mc{P}$ be a poset. An \emph{interval} in $\mc{P}$ is a subset $I \inc \mc{P}$ such that whenever $x, y \in I$ and $x < z < y$, we have $z \in I$. If $\mc{P}$ is any poset, then we denote by $\mc{I}_\mc{P}$ be the poset of intervals in $\mc{P}$ ordered by inclusion. If $I$ and $J$ are a pair of intervals, we'll write $I \prec J$ if $I \cap J = \emptyset$ and there is no pair $(i \in I, j \in J)$ with $i > j$.
\end{dfn}
Note that the relation $\prec$ is \emph{not} a partial order: for example, if $p, q \in \mc{P}$ are incomparable, then both $\{p\} \prec \{q\}$ and $\{q\} \prec \{p\}$.
\begin{dfn} \label{dfn:prestrat}
Suppose $\mb{C}$ is a stable $\iy$-category. Let $\mc{E}_\mb{C}$ be the poset of stable reflective subcategories of $\mb{C}$, ordered by inclusion; equivalently, $\mc{E}_\mb{C}$ is the opposite of the poset of exact localizations of $\mb{C}$. Let $\mb{P}$ be a poset. Then a \emph{pre-stratification of $\mb{C}$ along $\mc{P}$} is a map of posets
\[ \mf{S}: \mc{I}_\mc{P} \to \mc{E}_\mb{C}.\]
\end{dfn}
Before we give the criteria that will qualify a pre-stratification as a stratification, it'll be useful to record an elementary fact about localizations.
\begin{lem}\label{lem:recolltfae}
	Given two localizations $\mc{L}_1$ and $\mc{L}_2$ on a stable $\iy$-category $\mb{C}$ such that $\mc{L}_1 \mc{L}_2 = 0$, the following conditions are equivalent:
	\begin{enumerate}[(1)] \item The natural diagram
		\[ \begin{tikzcd} \id \ar{r} \ar{d} & \mc{L}_1 \ar{d} \\ \mc{L}_2 \ar{r} & \mc{L}_2 \mc{L}_1 \end{tikzcd} \]
		is a pullback square.
		\item The containment $\mc{L}_2 \mb{C} \inc \ker(\mc{L}_1)$ is an equality.
		\item $\mc{L}_1$ and $\mc{L}_2$ are jointly conservative.
	\end{enumerate}
\end{lem}
\begin{proof}
	The implication $(1) \Rightarrow (3)$ is obvious. We'll prove $(3) \Rightarrow (2) \Rightarrow (1)$. Suppose that $\mc{L}_1$ and $\mc{L}_2$ are jointly conservative; then if some \[X \in \ker(\mc{L}_1) \setminus \mc{L}_2 \mb{C},\] the localization map $X \to \mc{L}_2 X$ is a non-equivalence which becomes an equivalence after applying either $\mc{L}_1$ or $\mc{L}_2$, establishing $(2)$. If we now denote by $\mc{C}$ the fiber of $\id \to \mc{L}_1$, then 
	\[ \im \mc{C} \inc \mc{L}_2 \mb{C} \] 
	(in fact, $\mc{C}$ is the coreflection into $\mc{L}_2 \mb{C}$). Now taking horizontal fibers in the square diagram gives the morphism
	\[ \mc{C} \to \mc{L}_2\mc{C} \]
	which is an equivalence, establishing $(1)$.
\end{proof}
\begin{dfn}\label{def:strat}
In the notation of Definition \ref{dfn:prestrat}, let
\[\mc{L}_I : \mb{C} \to \mb{C}\] 
be the localization functor corresponding to $\mf{S}(I)$. Then we call $\mf{S}$ a \emph{stratification of $\mb{C}$ along $\mc{P}$} if the following conditions hold:\begin{enumerate}[(1)]
\item $\mf{S}(\mc{P}) = \mb{C}$,
\item if $I_2 \prec I_1$, then $\mc{L}_{I_1} \mc{L}_{I_2} = 0$, 
\item and if $I = I_1 \coprod I_2$, then $\mc{L}_{I_1}$ and $\mc{L}_{I_2}$, viewed as localization functors on $\mf{S}(I)$, satisfy the equivalent conditions of Lemma \ref{lem:recolltfae}.   \end{enumerate}
\end{dfn}
\begin{rem}
Since $\emptyset \prec \emptyset$, axiom (2) implies that $\mc{S}(\emptyset) = \{0\}$.
\end{rem}
\begin{rem}
It follows from the latter two axioms that if $I = I_1 \coprod I_2$ and $I_2 \prec I_1$, then $\mf{S}(I)$ is a \emph{recollement} of $\mf{S}(I_2)$ and $\mf{S}(I_1)$ in the sense of \cite[Definition A.8.1]{HA}.
\end{rem}
\begin{rem}
For most of this section we'll assume that $\mc{P}$ is finite, but our main result generalizes easily to certain infinite posets (Definition \ref{dfn:prostrat}). $n$ will usually denote the cardinality of $\mc{P}$. We'll also assume that $\mc{P}$ is connected; it's easy to see that a category stratified along a disconnected poset decomposes naturally as a direct sum of categories stratified along the connected components.
\end{rem}
Next we'll see some examples of stratifications. Suppose $\mb{C}$ is a presentable symmetric monoidal stable $\iy$-category whose tensor product preserves colimits in each variable, so that we can talk about the homological localization with respect to an object $E \in \mb{C}$ \cite{Bou79}:
\begin{rec}
An object $F \in \mb{C}$ is called \emph{$E$-acyclic} if $E \ot F$ is zero. An object $G \in \mb{C}$ is called \emph{$E$-local} if $\Map(F, G)$ is contractible for any $E$-acyclic $F$. The $E$-local objects of $\mb{C}$ form a reflective subcategory $\mb{L}_E$, with associated localization functor $\mc{L}_E$.
\end{rec}
Let $\mc{P}$ be a poset and suppose we have an object $K_p$ for each $p \in \mc{P}$ such that any $K_p$-local object is $K_q$-acyclic unless $p \geq q$. We can define a pre-stratification $\mf{S}_{K_\bu}$  of $\mb{C}$ along $\mc{P}$ by assigning to $I$ the category of objects which are local with respect to the object $\bigvee_{i \in I} K_i.$
\begin{prop}\label{objstrat}
$\mf{S}_{K_\bu}$ is a stratification of $\mf{S}_{K_\bu}(\mc{P}) = \mb{L}_{\bv_{p \in \mc{P}} K_p}$.
\end{prop}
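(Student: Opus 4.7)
The first axiom $\mf{S}(\mc{P}) = \mb{L}_{\bv_{p \in \mc{P}} K_p} \mb{C}$ holds by construction, so the content is the pullback square for each interval splitting $I = I_1 \sqcup I_2$ with $I_2 \prec I_1$. I plan to deduce this from the following key lemma: \emph{for any interval $I' \subseteq \mc{P}$ and any $i \in \mc{P} \setminus I'$ with $I' \prec \{i\}$, the localization $\mc{L}_{I'}$ preserves $K_i$-acyclicity}, which will itself be proved by induction on $|I'|$.

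Granting the key lemma, the pullback square is obtained as follows. By the exactness of $\mc{L}_{I_2}$ and $\mc{L}_I$, the square is a pullback if and only if $F := \mathrm{fib}(\mc{L}_I X \to \mc{L}_{I_1} X) \simeq \mc{L}_I \mathrm{fib}(X \to \mc{L}_{I_1} X)$ is $\mc{L}_{I_2}$-local, making it coincide with $\mathrm{fib}(\mc{L}_{I_2} X \to \mc{L}_{I_2} \mc{L}_{I_1} X)$. Now $F$ is $\mc{L}_I$-local as the fiber of $\mc{L}_I$-local objects, and $F$ is $\mc{L}_{I_1}$-acyclic (since $\mc{L}_{I_1} \mc{L}_I X \simeq \mc{L}_{I_1} X$ reduces the relevant fiber to zero). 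The key lemma, applied to $I' = I_2$ and each $i_1 \in I_1$ (so $I_2 \prec I_1$ yields $I_2 \prec \{i_1\}$), shows that $\mc{L}_{I_2} F$ is also $\mc{L}_{I_1}$-acyclic. The natural map $F \to \mc{L}_{I_2} F$ is then an $\mc{L}_I$-equivalence: on the $\bv_{i_1 \in I_1} K_{i_1}$-summand of $\bv_{i \in I} K_i$ it is $0 \to 0$, and on the $\bv_{i_2 \in I_2} K_{i_2}$-summand it is an $\mc{L}_{I_2}$-equivalence by construction. Between the $\mc{L}_I$-local objects $F$ and $\mc{L}_{I_2} F$, an $\mc{L}_I$-equivalence is an equivalence.

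The key lemma is proved by induction on $|I'|$. In the base case $|I'| = 1$, write $I' = \{i'\}$; then $i' \neq i$ and $i' \not> i$ give $i' \not\geq i$, so by hypothesis every $K_{i'}$-local object is $K_i$-acyclic, and in particular $\mc{L}_{K_{i'}} W$ is $K_i$-acyclic for every $W$. For the inductive step with $|I'| \geq 2$, I choose a decomposition $I' = I'_1 \sqcup I'_2$ into two proper subintervals with $I'_2 \prec I'_1$; such a decomposition always exists (take $I'_2$ to be the minimal elements of $I'$ if $I'$ is not an antichain, and split arbitrarily if it is, using that antichains and the convex complements of antichains in an interval are again intervals). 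The argument then runs in two sub-steps: first, applying the inductive hypothesis to $I'_2$ with each $i_1 \in I'_1$ (using $I'_2 \prec \{i_1\}$) establishes, via the deduction in the previous paragraph applied one level down, the fracture square for $I'$ along this particular split; second, applying the inductive hypothesis to each of $I'_1$ and $I'_2$ with the external element $i$ (so $I'_1, I'_2 \prec \{i\}$ by restriction from $I' \prec \{i\}$) shows that $\mc{L}_{I'_1} W$, $\mc{L}_{I'_2} W$, and $\mc{L}_{I'_2} \mc{L}_{I'_1} W$ are all $K_i$-acyclic, and then applying the exact functor $K_i \otimes -$ to this fracture square displays $K_i \otimes \mc{L}_{I'} W$ as a pullback of zero objects. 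Once the key lemma is established universally, the same reduction produces the pullback square for every valid split.

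The main subtlety is the layered ordering within each inductive step: the fracture square for $I'$ along the chosen split must be deduced first, relying only on the inductive hypothesis for strictly smaller intervals, before being deployed to verify the key lemma at level $I'$. Provided this ordering is respected no circularity arises, and verifying the combinatorial existence of the required interval splittings is a routine convexity check.
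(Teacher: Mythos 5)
Your proof is correct, and it takes a genuinely different route from the paper's. The paper proceeds by induction on $|\mc{P}|$ via two lemmas: a folklore fracture-square lemma (proved via a total-fiber argument) asserting that if every $E$-local object is $F$-acyclic then $\mc{L}_{E \vee F}$, $\mc{L}_E$, $\mc{L}_F$, $\mc{L}_E\mc{L}_F$ assemble into a pullback square; and a second lemma asserting that any object of $\mf{S}_{K_\bu}(\mc{I})$ is $K_p$-acyclic when $\mc{I} \prec \{p\}$. The second lemma is proved by a \emph{forward reference} to Theorem \ref{strat}: by the inductive hypothesis, $\mf{S}_{K_\bu}$ restricts to a stratification along $\mc{I}$, and then Theorem \ref{strat} expresses any $\mf{S}_{K_\bu}(\mc{I})$-local object as a finite limit of objects each of which is $K_i$-local for a single $i \in \mc{I}$, hence $K_p$-acyclic. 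Your approach avoids this forward reference: you replace the paper's second lemma by the weaker (but sufficient) statement that $\mc{L}_{I'}$ merely \emph{preserves} $K_i$-acyclicity, and you prove it by a self-contained induction on $|I'|$ that carefully interleaves two sub-steps (first deduce the fracture square for $I'$ from the hypothesis at $I'_2$, then propagate $K_i$-acyclicity through that square). You also re-derive the fracture-square reduction explicitly rather than citing folklore. Your version buys logical self-containment and makes the induction ordering transparent; the paper's version is shorter but requires the reader to confirm that Theorem \ref{strat}'s proof is independent of this proposition. One small wording slip: your parenthetical claim that ``the convex complements of antichains in an interval are again intervals'' is false for an arbitrary antichain (e.g.\ removing the middle element of a three-chain leaves a non-interval), but the specific complement you actually use --- that of the set of \emph{minimal} elements of $I'$ --- is indeed an interval, so the splitting you construct is valid and the argument goes through.
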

\begin{proof}
The proof will consist of two lemmas, and will use induction on the cardinality of $\mc{P}$.
\begin{lem}\label{lem:locpull} Suppose $E$ and $F$ are such that any $E$-local object is $F$-acyclic. Then the square of functors
\[\begin{tikzcd}
\mc{L}_{E \vee F} \ar{r}{f} \ar{d}{g} & \mc{L}_F \ar{d}{h} \\
\mc{L}_E \ar{r}{i} & \mc{L}_E \mc{L}_F
\end{tikzcd}\]
is a pullback square. \end{lem}
\begin{proof}This fact is folklore, and cases of it go back to Bousfield and further. The proof, which we now give, is simple.

$g$ and $h$ are both $E$-localizations, and therefore $E$-equivalences, and so the total fiber of the square is $E$-acyclic. On the other hand, $f$ is an $F$-equivalence, and so is $i$ because its source and target are both $F$-acyclic. Thus the total fiber of the square is $F$-acyclic, and so $E \vee F$-acyclic. But everything in the square is $E \vee F$-local, so the total fiber must also be $E \vee F$-local, and hence zero.
\end{proof}
\begin{lem}
Suppose $\mc{I} \inc \mc{P}$ is a proper interval and $p \in \mc{P}$ is such that $\mc{I} \prec \{p\}$. Then any object $E \in \mf{S}_{K_\bu}(\mc{I})$ is $K_p$-acyclic.
\end{lem}
\begin{proof}
The proof will use Theorem \ref{strat} (spoilers). By the induction hypothesis, $\mf{S}_{K_\bu}$ restricts to a stratification of $\mf{S}_{K_\bu}(\mc{I})$ along $\mc{I}$. Then the proof of Theorem \ref{strat} expresses $E$ as a finite limit of objects which are $K_i$-local for some $i \in \mc{I}$, and thus $K_p$-acyclic. Therefore $E$ is $K_p$-acyclic.
\end{proof}
\end{proof}
\begin{rem}
	For this class of stratifications, in the case where $\mc{P}$ is totally ordered, a theorem similar to Theorem \ref{strat} has appeared previously in \cite{A-CB14}.
\end{rem}
We'll now give a pair of quick applications of Proposition \ref{objstrat}.
\begin{exa}
Fix a prime $p$. Then the Morava $K$-theory spectra $K(0), \cdots, K(n)$ give rise to a stratification of the category of $\bv_{i = 0}^n K(i)$-local spectra along $(D^n)^\op$. (Recall that a spectrum is $\bv_{i = 0}^n K(i)$-local if and only if it's local with respect to the Morava $E$-theory $E_n$.) The pullback squares in this stratification include the famous \emph{chromatic fracture squares}
\[\begin{tikzcd}
\mc{L}_{E_n} \ar{r} \ar{d} & \mc{L}_{K_n} \ar{d} \\
\mc{L}_{E_{n-1}} \ar{r} & \mc{L}_{E_{n-1}} \mc{L}_{K_n}
\end{tikzcd}\]
which are the subject of Hopkins' chromatic splitting conjecture \cite{Hov93}.
\end{exa}
\begin{exa}
Let $X$ be a scheme; let $(U_i)_{0 \leq i \leq n}$ be locally closed subschemes of $X$ such $U_0 = X$, $U_1$ is an open subscheme of $X$, and for $i \geq 2$, $U_i$ is an open subscheme of $U_{i - 1} \setminus U_{i - 2}$. We say that the $U_i$ form a \emph{stratification} of $X$.

Let $\mb{QC}(X)$ be the stable $\iy$-category of quasicoherent complexes on $\mb{X}$. Then the structure sheaves $\mc{O}_{U_i} \in \mb{QC}(X)$, $1 \leq i \leq n$, satisfy the hypotheses of Proposition \ref{objstrat} and so form a stratification of $\mc{L}_{\bigoplus_{i=1}^n \mc{O}_{U_i}} \mb{QC}(X)$ along $\D^{n-1}$. In the case where
\[\bigcup_{i =1}^n U_i = X,\]
this is a stratification of $\mb{QC}(X)$ itself.
\end{exa}
Our other main source of examples of stratifications comes from the theory developed in Section \ref{sec:orbmack}. We'll be able to say something about orbital $\iy$-categories and substantially more about epiorbital categories.

Let $\mc{M}$ be an orbital $\iy$-category, let $\mc{N}$ be a downwardly-closed subcategory of $\mc{M}$, and let $\mc{T}$ be its upwardly-closed complement. If $\mb{C}$ is a stable $\iy$-category with all limits and colimits, let $\Mack(\mc{M}; \mb{C})$ be the category of $\mb{C}$-valued Mackey functors on $\mc{M}$ (Definition \ref{mackfun}). We define a pre-stratification $\mf{S}_\mc{M}$ of $\Mack(\mc{M}, \mb{C})$ along $\D^1$ as follows: \begin{itemize}
\item $\mf{S}_\mc{M}(\D^1) = \Mack(\mc{M}, \mb{C})$,
\item $\mf{S}_\mc{M}(\{1\})$ is the category $\Mack^{\mc{T}}(\mc{M}, \mb{C})$ of Mackey functors in the essential image of the right Kan extension from $\Mack(\mc{T}, \mb{C})$,
\item $\mf{S}_\mc{M}(\{0\})$ is the category $\Mack_{\mc{N}}(\mc{M}, \mb{C})$ of Mackey functors supported on $\mc{N}$ (see Corollary \ref{mackn0}).
\end{itemize}
\begin{prop} \label{prop:orbstratD1}
$\mf{S}_\mc{M}$ is a stratification.
\end{prop}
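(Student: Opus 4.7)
My plan is to identify each of the three localization functors $\mc{L}_I$ for $I \in \mc{I}_{\D^1}$ explicitly, and then verify the single nontrivial pullback square in the definition of a stratification. The real inputs will be Theorem \ref{thm1} together with the fully faithfulness of the right Kan extension along $A^{eff}(i_\mc{T})$; once these are in hand, everything else is formal.

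First, Corollary \ref{mackn0} tells me that $\Xi^\mc{N}$ is fully faithful with essential image $\Mack_\mc{N}(\mc{M}, \mb{C})$ and left adjoint $\Phi^\mc{N}$, so $\mc{L}_{\{0\}} = \Xi^\mc{N} \Phi^\mc{N}$. Since $A^{eff}(i_\mc{T})$ is fully faithful, the right Kan extension along it, which I denote $(i_\mc{T})_*$, is fully faithful with essential image $\Mack^\mc{T}(\mc{M}, \mb{C})$ and left adjoint $\Pi^\mc{T}$, so $\mc{L}_{\{1\}} = (i_\mc{T})_* \Pi^\mc{T}$; and $\mc{L}_{\{0,1\}} = \Id$. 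Both reflections are exact, being compositions of adjoints between stable $\iy$-categories, so each essential image is a stable reflective subcategory of $\Mack(\mc{M}, \mb{C})$. Monotonicity of $\mf{S}_\mc{M}$ is immediate from the evident inclusions, and the normalization $\mf{S}_\mc{M}(\D^1) = \Mack(\mc{M}, \mb{C})$ holds by definition, so $\mf{S}_\mc{M}$ is at least a pre-stratification.

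The only nontrivial decomposition of an interval in $\D^1$ is $\{0,1\} = \{0\} \coprod \{1\}$, and since $0 < 1$ the definition of $\prec$ gives $\{0\} \prec \{1\}$, so $I_2 = \{0\}$ and $I_1 = \{1\}$. I must verify that
\[\begin{tikzcd}
\Id \ar{r}{\eta} \ar{d} & (i_\mc{T})_* \Pi^\mc{T} \ar{d} \\
\Xi^\mc{N} \Phi^\mc{N} \ar{r} & \Xi^\mc{N} \Phi^\mc{N} (i_\mc{T})_* \Pi^\mc{T}
\end{tikzcd}\]
is cartesian. Since $\Mack(\mc{M}, \mb{C})$ is stable, this is equivalent to showing that the natural map between the fibers of the two vertical arrows is an equivalence. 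Theorem \ref{thm1} identifies the fiber of the left vertical with $\Gamma^\mc{T} \Pi^\mc{T}$; applied to $(i_\mc{T})_* \Pi^\mc{T} X$, it gives the fiber of the right vertical as $\Gamma^\mc{T} \Pi^\mc{T} (i_\mc{T})_* \Pi^\mc{T}$, and the induced comparison map is $\Gamma^\mc{T}$ applied to $\Pi^\mc{T}(\eta)$. But $(i_\mc{T})_*$ is fully faithful, so the counit $\Pi^\mc{T}(i_\mc{T})_* \to \Id$ is an equivalence, and the triangle identity for the adjunction $\Pi^\mc{T} \dashv (i_\mc{T})_*$ then forces $\Pi^\mc{T}(\eta)$ to be an equivalence as well. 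I do not anticipate a main obstacle here: the substantive categorical work has already been done in Theorem \ref{thm1} and Corollary \ref{mackn0}, and the remaining argument is a formal comparison of fibers via a two-out-of-three in the triangle identities.
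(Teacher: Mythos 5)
Your argument is correct and follows the paper's proof essentially verbatim: take vertical fibers of the square, identify both via Theorem \ref{thm1} as $\Gamma^\mc{T}\Pi^\mc{T}$ applied to $\Id$ and to $\mc{L}_1$ respectively, and observe the comparison map is an equivalence. You simply spell out the final step (the paper dismisses it as ``obvious'') via full faithfulness of $(i_\mc{T})_*$ and a triangle identity, which is a fine expansion but not a new idea.
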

\begin{proof}
We must show that the square
\[\begin{tikzcd} \id_{\Mack(\mc{M}, \mb{C})} \ar{r} \ar{d} & \mc{L}_1 \ar{d} \\
\mc{L}_0 \ar{r} & \mc{L}_0 \mc{L}_1 \end{tikzcd} \]
is a pullback square of endofunctors. But by taking vertical fibers and applying Theorem \ref{thm1}, we're reduced to showing that the natural map
\[\Gamma^\mc{T} \Pi^\mc{T} \to \Gamma^\mc{T} \Pi^\mc{T} \mc{L}_1\]
is an equivalence, which is obvious. 
\end{proof}

Now suppose $\mc{M}$ is epiorbital, and let $\mc{P}_\mc{M}$ be the poset of isomorphism classes in $\mc{M}$ (Definition \ref{orbposet}).
\begin{dfn}
We define a pre-stratification $\mf{S}_\mc{M}$ on $\Mack(\mc{M}; \mb{C})$ as follows. If $I \inc \mc{P}_\mc{M}$ is any interval, let $\mc{I}$ be the corresponding full subcategory of $\mc{M}$. If $I$ is downwardly-closed, then we define
\[\mf{S}_\mc{M}(I) = \Mack_{\mc{I}}(\mc{M}; \mb{C}) \tag*{(Corollary \ref{mackn0})}\]
If $J \subseteq \mc{P}_\mc{M}$ is upwardly closed, then let $\Mack^{\mc{J}}(\mc{M}, \mb{C})$ be the essential image of the right Kan extension
\[A^{eff}(i_{\mc{J}})_* : \Mack(\mc{J}, \mb{C}) \to \Mack(\mc{M}, \mb{C}).\]
We define
\[\mf{S}_\mc{M}(J) = \Mack^{\mc{J}}(\mc{M}, \mb{C}).\]
If $I \inc \mc{P}_\mc{M}$ is any interval, then we can write
\[I = I_+ \cap I_-\]
where $I_+$ is the smallest upwardly-closed set containing $I$ and $I_-$ is, likewise, the smallest downwardly-closed set containing $I$. Then we define
\[\mf{S}_\mc{M}(I) = \Mack^{\mc{I}_+}(\mc{M}, \mb{C}) \cap \Mack_{\mc{I}_-}(\mc{M}, \mb{C}).\]
\end{dfn}
\begin{prop}
$\mf{S}_\mc{M}$ is a stratification of $\Mack(\mc{M}, \mb{C})$ along $\mc{P}_\mc{M}$.
\end{prop}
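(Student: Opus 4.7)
The plan is to verify the two axioms of a stratification. The first, $\mf{S}_\mc{M}(\mc{P}_\mc{M}) = \Mack(\mc{M},\mb{C})$, is immediate: when $I = \mc{P}_\mc{M}$ we have $I_+ = I_- = \mc{P}_\mc{M}$, so $\mf{S}_\mc{M}(I) = \Mack^\mc{M}(\mc{M},\mb{C}) \cap \Mack_\mc{M}(\mc{M},\mb{C}) = \Mack(\mc{M},\mb{C})$. The substantive content is the pullback-square axiom.

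Fix a decomposition $I = I_1 \sqcup I_2$ of an interval with $I_2 \prec I_1$. My first step is a combinatorial observation: the set $I_2 \cup (\mc{I}_- \setminus I)$ is downwardly-closed in $\mc{I}_-$ with upwardly-closed complement exactly $I_1$. Indeed, if $z \in I_2$ and $y \in \mc{I}_-$ with $y < z$, having $y \in I_1$ would produce a pair violating $I_2 \prec I_1$, and so $y \in I_2 \cup (\mc{I}_- \setminus I)$; and for $z \in \mc{I}_- \setminus I$ with $y \leq z$, the definition of $\mc{I}_-$ gives some $x \in I$ with $z < x$, so the interval property of $I$ forces $y \notin I$, yielding $y \in \mc{I}_- \setminus I$. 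Taking complement in $\mc{I}_-$ gives $I_1 = I \setminus I_2$ as claimed.

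Next I would use Corollary \ref{mackn0} to identify $\Mack_{\mc{I}_-}(\mc{M},\mb{C}) \simeq \Mack(\mc{I}_-, \mb{C})$ via $\Xi^{\mc{I}_-}$ and $\Phi^{\mc{I}_-}$. All three strata $\mf{S}_\mc{M}(I), \mf{S}_\mc{M}(I_1), \mf{S}_\mc{M}(I_2)$ land in $\Mack_{\mc{I}_-}$ since their $(-)_-$-closures are contained in $\mc{I}_-$, and the compatibility of their coinduction conditions with the equivalence shows they correspond to the analogous strata for the pre-stratification $\mf{S}_{\mc{I}_-}$ on the EOC $\mc{I}_-$ (with $I$, $I_1$, $I_2$ now read as subsets of the poset $\mc{I}_-$). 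With this match in place, Proposition \ref{prop:orbstratD1} applied to $\mc{I}_-$ with downwardly-closed subcategory $I_2 \cup (\mc{I}_- \setminus I)$ yields the $\D^1$-fracture square in $\Mack(\mc{I}_-,\mb{C})$; exactness and full faithfulness of $\Xi^{\mc{I}_-}$ then transport this pullback back to the required one in $\Mack(\mc{M},\mb{C})$.

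The main obstacle is verifying the compatibility of coinduction conditions under $\Xi^{\mc{I}_-}$: specifically, showing that a Mackey functor right Kan extended from $I_+ \subseteq \mc{M}$ lies in $\Mack_{\mc{I}_-}$ precisely when it is right Kan extended from $I = I_+ \cap \mc{I}_-$ within $\mc{I}_-$ and then transported by $\Xi^{\mc{I}_-}$. This amounts to showing that right Kan extension along $i_{I_+} : I_+ \hookrightarrow \mc{M}$ of an $I$-supported functor has all values outside $\mc{I}_-$ equal to zero, and is an analysis of the overcategories appearing in the Kan extension formula in the effective Burnside category, leveraging the directionality of the EOC structure to ensure that the relevant limit diagrams collapse. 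Once this compatibility is secured, the pullback-square axiom is a formal consequence of Proposition \ref{prop:orbstratD1}.
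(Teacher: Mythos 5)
Your strategy---reducing to the downward closure $\mc{I}_-$ and invoking Proposition \ref{prop:orbstratD1}---matches the paper's, and your combinatorial observation (that $I_1$ is upwardly-closed in $\mc{I}_-$ with downwardly-closed complement $I_2 \cup (\mc{I}_- \setminus I)$) is correct. But there is a gap in the final step. The $\D^1$-fracture square that Proposition \ref{prop:orbstratD1} produces has the \emph{identity} of $\Mack(\mc{I}_-,\mb{C})$ in the upper-left corner and the localization onto functors supported on the full complement $I_2 \cup (\mc{I}_- \setminus I)$ in the lower-left. After transporting through $\Xi^{\mc{I}_-}$, these become $\mc{L}_{\mc{I}_-}$ and $\mc{L}_{I_2 \cup (\mc{I}_- \setminus I)}$, which coincide with the required $\mc{L}_I$ and $\mc{L}_{I_2}$ only when $I = \mc{I}_-$, i.e.\ when $I$ is downwardly closed.

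For a general interval $I$ one further step is needed, which the paper supplies: apply $\mc{L}_I$ to the entire fracture square, and then use $\mc{L}_I \mc{L}_{I_1} = \mc{L}_{I_1}$ (since $I_1 \subseteq I$) together with $\mc{L}_I \mc{L}_{\mc{P}_\mc{M} \setminus I_1} = \mc{L}_{I_2}$ (since $I_2 = I \cap (\mc{P}_\mc{M} \setminus I_1)$). Your claim that transport via $\Xi^{\mc{I}_-}$ already yields ``the required one in $\Mack(\mc{M},\mb{C})$'' is therefore unjustified whenever $I$ is not downwardly closed. The compatibility of coinduction conditions you flag as the ``main obstacle'' is a real point, but it is secondary to this missing cut-down from the fracture square to $\mc{L}_I$.
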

\begin{proof}
Clearly 
\[\mf{S}_\mc{M}(\mc{P}_\mc{M}) = \Mack(\mc{M}, \mb{C}).\]
We must verify $(3)$ in Definition \ref{def:strat}. Let $\mc{I}, \mc{I}_1, \mc{I}_2$, be the full subcategories of $\mc{M}$ corresponding respectively to $I, I_1, I_2$, and let $\mc{D}$ be the smallest downwardly-closed subcategory of $\mc{M}$ containing $\mc{I}$. Then by passing to $\Mack_\mc{D}(\mc{M}, \mb{C})$ if necessarily, we may assume that $I_1$ is upwardly-closed.

Assume for a moment that $I = \mc{P}_\mc{M}$. Then, as in the proof of Proposition \ref{prop:orbstratD1}, we conclude by taking fibers vertically and invoking Theorem \ref{thm1}. In general, we note that
\[I_2 = I \cap (\mc{P}_\mc{M} \setminus I_1).\]
We know that the natural diagram
\[\begin{tikzcd}
\id \ar{r} \ar{d} & \mc{L}_{I_1} \ar{d} \\
\mc{L}_{\mc{P}_\mc{M} \setminus I_1} \ar{r} & \mc{L}_{\mc{P}_\mc{M} \setminus I_1} \mc{L}_{I_1}
\end{tikzcd}\]
is a pullback square, and applying $\mc{L}_I$ to the entire square gives the result.
\end{proof}

We'll now start setting up for our main result on the reconstruction of objects in a stratified category from their atomic localizations.
\begin{dfn}
Let $\mc{P}$ be a finite poset. Define $\Su(\mc{P})$ to be the poset of nonempty subsets $T = \{i_1, i_2, \cdots, i_k\}$ of $\mc{P}$, ordered by reverse inclusion.
\end{dfn}
\begin{dfn}
Let $(\mb{C}, \mf{S})$ be a stable $\iy$-category stratified along $\mc{P}$. We define an $\iy$-category $\mb{C}^\mf{S}$ as the full subcategory of $\Fun(\Su(\mc{P}), \mb{C})$ spanned by those functors
\[F : \Su(\mc{P}) \to \mb{C}\]
 such that
\begin{itemize}
\item for each $T \in \Su(\mc{P})$ and for each minimal element $t \in T$, $F(T)$ is in $\mf{S}(\{t\})$;
\item if $e$ is an edge of $\Su(\mc{P})$ of the form $T \to T \cup \{p\}$ where $\{p\} \prec T$, then $F(e)$ exhibits $F(T \cup \{p\})$ as the $\mf{S}(\{p\})$-localization of $F(T)$.
\end{itemize}
\end{dfn}
The following, describing how objects of a stratified category can be assembled using higher fracture squares, is the main theorem of this section. 
\begin{thm} \label{strat}
There's an equivalence of categories
\[\mf{d} : \mb{C} \to \mb{C}^\mf{S}.\]
This equivalence will be constructed as an explicit zigzag in the course of the proof.
\end{thm}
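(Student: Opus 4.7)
The plan is to construct $\mf{d}$ explicitly as an iterated localization, verify it lands in $\mb{C}^\mf{S}$, and then prove it is an equivalence by induction on $|\mc{P}|$ using the $\D^1$-stratification obtained by peeling off a minimal element.

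To define $\mf{d}$ on objects, fix $X \in \mb{C}$ and $T = \{t_1, \ldots, t_k\} \in \Su(\mc{P})$. Pick a linear extension in which larger-in-$\mc{P}$ elements come first (so $t_i > t_j$ in $\mc{P}$ implies $i < j$), and set
\[\mf{d}(X)(T) = \mc{L}_{\{t_k\}} \circ \cdots \circ \mc{L}_{\{t_1\}}(X).\]
The first technical point is independence of the chosen linear extension: any two are related by a sequence of swaps of adjacent incomparable pairs, so it suffices to show $\mc{L}_{\{p\}}\mc{L}_{\{q\}} \simeq \mc{L}_{\{q\}}\mc{L}_{\{p\}}$ for incomparable $p, q$. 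This commutation is extracted from the two stratification pullback descriptions of $\mc{L}_{\{p,q\}}$ available when both $\{p\} \prec \{q\}$ and $\{q\} \prec \{p\}$ hold (precisely the incomparable case). Functoriality of $\mf{d}(X)$ along arrows $T \to T'$ of $\Su(\mc{P})$ is obtained by extending linear extensions coherently and composing with localization unit maps.

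Verifying the conditions defining $\mb{C}^\mf{S}$ is now straightforward. For each minimal $t \in T$, choose a linear extension ending in $t$; then $\mf{d}(X)(T)$ lies in the image of $\mc{L}_{\{t\}}$, hence in $\mf{S}(\{t\})$. For an edge $T \to T \cup \{p\}$ with $\{p\} \prec T$, the condition $p \not> s$ for all $s \in T$ permits appending $p$ as the last element in a linear extension of $T \cup \{p\}$, whence $\mf{d}(X)(T \cup \{p\}) \simeq \mc{L}_{\{p\}}\mf{d}(X)(T)$ with the transition being the unit of localization.

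To show $\mf{d}$ is an equivalence, I induct on $|\mc{P}|$. The base case $|\mc{P}| = 1$ is trivial. For the inductive step, pick a minimal $p \in \mc{P}$ and let $\mc{P}' = \mc{P} \setminus \{p\}$. The relation $\{p\} \prec \mc{P}'$ (from $p$ being minimal) yields the stratification pullback square
\[\begin{tikzcd}
X \ar{r} \ar{d} & \mc{L}_{\mc{P}'}X \ar{d} \\
\mc{L}_{\{p\}}X \ar{r} & \mc{L}_{\{p\}}\mc{L}_{\mc{P}'}X
\end{tikzcd}\]
natural in $X$. The restriction $\mf{S}|_{\mc{I}_{\mc{P}'}}$ is a stratification of $\mf{S}(\mc{P}')$ along $\mc{P}'$, to which the inductive hypothesis applies. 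On the other side, $\Su(\mc{P}) = \Su(\mc{P}') \sqcup \{T : p \in T\}$; since $p$ is minimal in every $T \ni p$, the $\mb{C}^\mf{S}$ conditions force $F(T) \in \mf{S}(\{p\})$ on that stratum, and the $\mf{S}(\{p\})$-localization edges $T \to T \cup \{p\}$ that cross between the two strata (available because $\{p\} \prec T$ for every $T \not\ni p$) determine the $p$-containing data from $F|_{\Su(\mc{P}')}$ together with the single basepoint $F(\{p\})$. Parsing these conditions carefully presents $\mb{C}^\mf{S}$ as a pullback matching the one above for $\mb{C}$, under which $\mf{d}$ is the pullback of the inductive equivalence and the identity on the $\mf{S}(\{p\})$ factor. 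The main obstacle is the careful verification of this pullback description of $\mb{C}^\mf{S}$: one must track the higher coherence data carried by the edges internal to the $p$-containing stratum, whose $\prec$ condition is strictly more restrictive than that for edges in $\Su(\mc{P}')$, and confirm that the resulting structure matches the pullback inherited from $\mb{C}$ on the nose.
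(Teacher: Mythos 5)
Your high-level strategy — peel off an extremal element of $\mc{P}$, split $\Su(\mc{P})$ into the $p$-containing and $p$-free strata, and match this against the two-step stratification of $\mb{C}$ along $\D^1$ — is conceptually the same as the paper's, which instead peels off a \emph{maximal} element. However, there is a genuine gap in the proposed construction of $\mf{d}$, and another, which you yourself flag, in the pullback presentation of $\mb{C}^\mf{S}$.

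The first gap is the definition of $\mf{d}(X)$ as an object of $\Fun(\Su(\mc{P}), \mb{C})$. You specify $\mf{d}(X)$ on objects of $\Su(\mc{P})$ by iterated localization along a chosen linear extension, argue independence of the extension via the commutation $\mc{L}_{\{p\}}\mc{L}_{\{q\}} \simeq \mc{L}_{\{q\}}\mc{L}_{\{p\}}$ for incomparable $p,q$, and then assert that ``functoriality\ldots\ is obtained by extending linear extensions coherently and composing with localization unit maps.'' In a $1$-categorical setting this last step is a check; in an $\iy$-categorical setting it is a genuine homotopy coherence problem, since a functor out of a poset as large as $\Su(\mc{P})$ carries an infinite tower of coherence data, not just values on objects and edges. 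The commutation of localizations that you establish is an equivalence, not a strict equality, and assembling all of these equivalences into a single coherent diagram is exactly the hard part. The paper sidesteps this entirely: it never tries to \emph{write down} $\mf{d}$ as a single functor. Instead it interpolates through the auxiliary categories $\mb{C}^\mf{S}_m \subseteq \Fun(\Su^m(\mc{P}), \mb{C})$, where the relevant functors are obtained by restriction and right Kan extension (Propositions \ref{strat1} and \ref{strat2}), operations that automatically produce coherent diagrams. This is why the theorem statement promises a \emph{zigzag}; your proposal tries to collapse the zigzag into one arrow, which is precisely where the coherence debt accumulates.

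The second gap, which you name as the ``main obstacle,'' is the identification of $\mb{C}^\mf{S}$ with a pullback of $\iy$-categories matching the $\D^1$-stratification of $\mb{C}$. Your decomposition $\Su(\mc{P}) = \Su(\mc{P}') \sqcup \{T : p \in T\}$, with the $p$-stratum isomorphic to a coned copy of $\Su(\mc{P}')$, is on the right track, but the $\mb{C}^\mf{S}$-conditions internal to the $p$-stratum are not literally the $\mb{C}^{\mf{S}'}$-conditions for a stratification of $\mf{S}(\{p\})$ (note that $\mf{S}(\{p\})$ is a \emph{minimal} stratum, so $\mf{S}$ does not restrict to a stratification of it, unlike the maximal case), and the ``basepoint'' $F(\{p\})$ is not free data — it is connected to the rest of the diagram through higher simplices that you are not tracking. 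The paper's device of the intermediate posets $\Su^m(\mc{P})$, which allow a single long interval $\mc{P}_{\leq m}$ to appear, supplies exactly the cone point needed to make the decomposition a clean product $\D^1 \times (\Su^m(\mc{Q}))^\lhd$ of simplicial sets, after which the equivalence is read off from an inner anodyne map. Without an analogue of this combinatorial normalization, the pullback presentation you want does not fall out of the definitions.
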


The first step is to realize that we don't have enough posets, and define some more posets.
\begin{dfn}
Let $\Su'(\mc{P})$ be the set whose elements are nonempty sets $\{I_1, \cdots, I_k\}$ of nonempty, disjoint intervals in $\mc{P}$ such that for each pair of indices $i, j$, either $I_i \prec I_j$ or $I_j \prec I_i$. We'll put a partial order on $\Su'(\mc{P})$ by letting 
\[\{I_1, \cdots, I_k\} \geq \{J_1, \cdots, J_l\}\] 
if there exist distinct indices $(i_1, \cdots, i_k)$ such that $J_{i_j} \inc I_j$ for each $j$.
\end{dfn}
If $T \in \Su'(\mc{P})$ and $I \in T$, then we'll call $I$ \emph{$\prec$-minimal} if for each $J \in T$ with $J \neq I$, $I \prec J$. This doesn't necessarily imply that there is no $J \in T$ such that $J \prec I$. However, since $\prec$ is transitive, each $T \in \Su'(\mc{P})$ has at least one $\prec$-minimal element.

Here's a way of ``coordinatizing" a poset.
\begin{dfn}
A \emph{threading} of a finite poset $\mc{P}$ is a filtration
\[\emptyset = \mc{P}_{\leq 0} \inc \mc{P}_{\leq 1} \inc \cdots \mc{P}_{\leq n} = \mc{P}\]
such that for each $i$, $\mc{P}_{\leq i}$ is downwardly closed, and 
\[|\mc{P}_{\leq i}| = i.\]
\end{dfn}
We'll fix, once and for all, a threading on $\mc{P}$.
\begin{dfn}
Let $\Su^m(\mc{P})$ be the subset of $\Su'(\mc{P})$ containing those sets \[T = \{I_1, \cdots, I_k\}\] such that for each $I_i$, either $|I_i| = 1$ or 
\[I_ i = \mc{P}_{\leq i} \text{ for some } i \leq m.\]
 In particular, at most one of the $I_i$ may have cardinality $>1$, and if this occurs then $I_i$ must be a $\prec$-minimal element of $T$.
\end{dfn}
There's an obvious isomorphism $\Su^1(\mc{P}) \cong \Su(\mc{P})$.
\begin{dfn}
Let $\mb{C}^\mf{S}_m$ be the full subcategory of $\Fun(\Su^m(\mc{P}))$ spanned by those functors
\[F: \Su^m(\mc{P}) \to \mb{C}\]
such that
\begin{itemize}
\item for each $T = \{I_1, \cdots, I_k\} \in \Su^m(\mc{P})$ and for each $I_i$ that is $\prec$-minimal in $T$, $F(T) \in \mf{S}(I_i)$;
\item if $e : T_1 \to T_2$ is an edge of $\Su^m(\mc{P})$ of the form 
\[\{I_1, I_2, \cdots, I_k\} \to \{I_1, I_2, \cdots, J_k\}\]
 with $I_k$ $\prec$-minimal in $T_1$ and $J_k \inc I_k$, then $F(e)$ exhibits $F(T_2)$ as the $\mf{S}(J_k)$-localization of $F(T_1)$;
\item if $e : T_1 \to T_2$ is an edge of $\Su^m(\mc{P})$ of the form 
\[\{I_1, I_2, \cdots, I_k\} \to \{I_1, I_2, \cdots, I_k, I_{k + 1}\}\]
with $I_{k + 1}$ $\prec$-minimal, then $F(e)$ exhibits $F(T_2)$ as the $\mf{S}(I_{k+1})$-localization of $F(T_1)$.
\end{itemize}
\end{dfn}
\begin{prop} \label{strat1}
If $\mc{P}$ has cardinality at most $m$, then the functor
\[e_\mc{P} : \mb{C}^\mf{S}_m \to \mb{C}\]
given by evaluation at $\{\mc{P}\}$ is an equivalence of categories.
\end{prop}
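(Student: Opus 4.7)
The plan is to construct an explicit inverse functor $\mf{d}\colon \mb{C} \to \mb{C}^\mf{S}_m$. The first observation is that $\{\mc{P}\}$ is the maximum of the poset $\Su^m(\mc{P})$: for any $T = \{I_1, \ldots, I_k\}$, choosing the single distinguished index $i_1 = 1$ gives $I_1 \subseteq \mc{P}$, so $\{\mc{P}\} \geq T$. Under the opposite category convention for posets, $\{\mc{P}\}$ is therefore the initial object of $\Su^m(\mc{P})$, and $e_\mc{P}$ is evaluation at the initial object. The plain right Kan extension from this initial object produces constant functors, which typically do not lie in $\mb{C}^\mf{S}_m$, so we need to insert the appropriate localizations by hand.

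For each $X \in \mb{C}$ and $T = \{I_1, \ldots, I_k\} \in \Su^m(\mc{P})$, I would choose a $\prec$-compatible linearization $I_1 \prec I_2 \prec \cdots \prec I_k$ with $I_1$ a $\prec$-minimal element of $T$, and set
\[\mf{d}(X)(T) \;=\; \mc{L}_{I_1}\mc{L}_{I_2}\cdots\mc{L}_{I_k}(X).\]
The outermost factor $\mc{L}_{I_1}$ places this in $\mf{S}(I_1)$ as required. Functoriality along basic edges is then immediate: a shrinking edge $\{I_1,\ldots,I_k\} \to \{I_1,\ldots,J_k\}$ with $J_k \subseteq I_k$ is handled by $\mc{L}_{J_k}\circ \mc{L}_{I_k} \simeq \mc{L}_{J_k}$ (since $\mf{S}(J_k) \subseteq \mf{S}(I_k)$), and an adjoining edge that inserts a new $\prec$-minimal interval $I_0$ is handled by prepending $\mc{L}_{I_0}$.

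The two composition identities then fall out. For $e_\mc{P}\circ \mf{d}\simeq \Id_\mb{C}$: $\mf{d}(X)(\{\mc{P}\}) = \mc{L}_\mc{P}(X) = X$ because $\mf{S}(\mc{P}) = \mb{C}$. For $\mf{d}\circ e_\mc{P}\simeq \Id_{\mb{C}^\mf{S}_m}$: given $F \in \mb{C}^\mf{S}_m$ with $X = F(\{\mc{P}\})$, the structure maps of $F$ out of $F(\{\mc{P}\})$ factor through the localizations by the edge conditions, producing a natural comparison $\mf{d}(X) \to F$. This is shown to be an objectwise equivalence by induction on the combinatorial complexity of $T$ using the explicit path $\{\mc{P}\}\to \{I_k\}\to \{I_{k-1},I_k\}\to\cdots\to T$, where each step is a basic edge and the edge condition on $F$ forces agreement with the iterated localization defining $\mf{d}$.

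The main obstacle is the well-definedness of $\mf{d}(X)(T)$: a priori the iterated localization may depend on the choice of $\prec$-compatible linearization, which is unique only up to transpositions of intervals that are simultaneously $\prec$-minimal. Two such linearizations differ by a sequence of adjacent transpositions of intervals $I_a, I_{a+1}$ for which both $I_a \prec I_{a+1}$ and $I_{a+1} \prec I_a$ hold; unpacking the definition of $\prec$, this forces every element of $I_a$ to be incomparable in $\mc{P}$ to every element of $I_{a+1}$, from which one checks that $I_a \sqcup I_{a+1}$ is itself an interval of $\mc{P}$ (the interval condition is vacuous across the two parts and holds within each part by assumption). Applying the fracture pullback square of Definition \ref{def:strat} twice to this interval, once with $I_{a+1} \prec I_a$ and once with $I_a \prec I_{a+1}$, yields canonical equivalences
\[\mc{L}_{I_a}\mc{L}_{I_{a+1}} \;\simeq\; \mc{L}_{I_a\sqcup I_{a+1}} \;\simeq\; \mc{L}_{I_{a+1}}\mc{L}_{I_a},\]
giving the required invariance. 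Once this commutation is in place, extending functoriality from basic edges to all morphisms of $\Su^m(\mc{P})$ is a routine diagram chase, and the proof is complete.
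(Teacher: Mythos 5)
The approach is genuinely different from the paper's, but it contains a real error. The paper proves the statement by induction on $m$: removing a maximal element $x$ from $\mc{P}$ gives an isomorphism of posets $\D^1 \times (\Su^m(\mc{Q}))^\lhd \cong \Su^m(\mc{P})$ with $\mc{Q} = \mc{P}\setminus\{x\}$, and the two-element fracture axiom is combined with the inductive hypothesis for $\mc{Q}$ via an inner anodyne inclusion. At no point does the paper write down a pointwise formula for an inverse functor. Your proposal instead tries to construct the inverse $\mf{d}$ explicitly as an iterated localization $\mc{L}_{I_1}\cdots\mc{L}_{I_k}(X)$, which puts the full weight of the argument on a commutation claim.

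The gap is in the well-definedness step. You assert
\[
\mc{L}_{I_a}\mc{L}_{I_{a+1}} \simeq \mc{L}_{I_a\sqcup I_{a+1}} \simeq \mc{L}_{I_{a+1}}\mc{L}_{I_a}
\]
and attribute this to the fracture axiom, but that is a misreading of Definition \ref{def:strat}: the axiom exhibits $\mc{L}_{I_a\sqcup I_{a+1}}$ as the \emph{pullback} of the square whose far corner is $\mc{L}_{I_a}\mc{L}_{I_{a+1}}$, not as the corner itself. A pullback is not generally equivalent to its cospan's target, so the displayed chain of equivalences is simply false as stated. What the two fracture squares do give you is an identification of certain fibers and cofibers (e.g.\ $\cof(\id\to\mc{L}_{I_a})\circ\mc{L}_{I_{a+1}} \simeq \mc{L}_{I_{a+1}}\circ\cof(\id\to\mc{L}_{I_a})$), but the commutation $\mc{L}_{I_a}\mc{L}_{I_{a+1}} \simeq \mc{L}_{I_{a+1}}\mc{L}_{I_a}$ does not fall out by a one-line argument. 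In fact, this commutation for incomparable singletons is a \emph{consequence} of Proposition \ref{strat1} itself — it must hold for any $F \in \mb{C}^\mf{S}_m$ by tracing the two adjoining-edge conditions into a common $T$ — so invoking it here without an independent proof is circular.

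There is a secondary, structural issue worth flagging even if the commutation were granted. Assembling objectwise values $\mf{d}(X)(T)$ and edgewise comparison maps into a genuine functor $\Su^m(\mc{P}) \to \mb{C}$, and then into a functor of $\iy$-categories $\mf{d}\colon \mb{C}\to\mb{C}^\mf{S}_m$, is a coherence problem: one must exhibit compatible higher homotopies, not just equivalences on objects and 1-morphisms. This is precisely what the paper's approach via the poset isomorphism $h$ and the inner anodyne inclusion handles for free, and it is why the paper constructs the equivalence as an abstract zigzag rather than as a formula.
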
 
\begin{proof}
We'll prove this by induction on $m$. If $m = 1$, there's nothing to do. If $m = 2$, then $\mc{P} \cong \Delta^1$ and we'll label its elements $0$ and $1$, with $0 > 1$ (this is unfortunately necessitated by our conventions). Then $\mb{C}^\mf{S}_m$ is the category of squares of the form
\[\begin{tikzcd}
E \ar[tail]{r} \ar[tail]{d} & E_0 \ar[tail]{d} \\
E_1 \ar{r} & E_{10}
\end{tikzcd}\]
in which $E_0 \in \mf{S}(0)$, $E_1, E_{10} \in \mf{S}(1)$ and the tailed arrows are localizations. By the stratification axioms, all such squares are cartesian. Then the fact that $e_{\D^1} : \mb{C}^\mf{S}_m \to \mb{C}$ is an equivalence is discussed, in almost exactly these terms, in the proof of \cite[Proposition A.8.11]{HA}.

In general, let $x \in \mc{P}$ be the unique element of $\mc{P} \setminus \mc{P}_{\leq n - 1}$; then $x$ is a maximal element. For convenience, we'll write $\mc{Q}$ for $\mc{P}_{\leq n -1}$. Observe that we have an isomorphism of posets
\[h: \D^1 \X ((\Su^m(\mc{Q}))^\lhd) \to \Su^m(\mc{P})\]
given by
\begin{align*} & h(0, c) = \{\mc{P}\}, \\
&h(1, c) = \{\{x\}\}, \\
&h(0, T) = T, \\
&h(1, T) = \{\{x\}\} \cup T, \end{align*}
where $c$ is the cone point. 

Now $\mb{C}$ admits a stratification $\mf{S}'$ along $\D^1$ wherein
\[\mf{S}'(1) = \mf{S}(\{x\}), \, \, \mf{S}'(0) = \mf{S}(\mc{Q}),\]
so that 
\[\phi_{\D^1} : \mb{C}^{\mf{S}'}_m \to \mb{C}\]
is an equivalence. On the other hand, $\mf{S}(\mc{Q})$ obviously inherits a stratification $\mf{S}''$ along $\mc{Q}$, and the functor 
\[\phi_{\mc{Q}} : \mf{S}(\mc{Q})^{\mf{S}''}_m \to \mf{S}(\mc{Q})\]
is an equivalence.

Here's what we deduce by combining these two equivalences. Let 
\[\mc{K} = \Su^m(\D^1) \amalg_{\D^1} (\Su^m(\mc{Q}) \X \D^1)\]
where we've glued the edge $\{0\} \to \{0, 1\}$ of $\Su^m(\D^1)$ to the edge $\{\mc{Q}\} \X \D^1$ of $\Su^m(\mc{Q}) \X \D^1$. Let $\ol{\mb{C}}$ be the full subcategory $\Fun(\mc{K}, \mb{C})$ spanned by those $F$ for which
\[F|_{\Su^m(\D^1)} \in \mb{C}^{\mf{S}'}_m\]
and
\[F|_{\Su^m(\mc{Q}) \X \{0\}}, F|_{\Su^m(\mc{Q}) \X \{1\}} \in \mf{S}(\mc{Q})^{\mf{S}''}_m.\]
Then evaluation on $\{\D^1\} \in \Su^m(\D^1)$ induces an equivalence of categories 
\[e_{\D^1} :\ol{\mb{C}} \toe \mb{C}.\]
But 
\[\Su^m(\D^1) \cong \D^1 \X \D^1,\]
and so
\[\mc{K} \cong (\D^1 \amalg_{\{1\}} \Su^m(\mc{Q})) \X \D^1.\]
For any simplicial set $S$, the inclusion
\[\D^1 \amalg_{\{1\}} S \inj S^\lhd\]
is inner anodyne, and so we get an inner anodyne composite
\[\mc{K} \inj \D^1 \X ((\Su^m(\mc{Q}))^\lhd) \os{h} \to \Su^m(\mc{P}),\]
giving an equivalence of categories $\Fun(\Su^m(\mc{P}), \mb{C}) \simeq \Fun(\mc{K}, \mb{C})$, which restricts to an equivalence of categories $\mb{C}^\mf{S}_m \to \ol{\mb{C}}$. Composing with $e_{\D^1}$ completes the proof.
\end{proof}
\begin{prop} \label{strat2}
For any $\mc{P}$ and any $\mb{C}$ stratified along $\mc{P}$, the restriction functor
\[r_m : \mb{C}^\mf{S}_m \to \mb{C}^\mf{S}_{m - 1}\]
is an equivalence.
\end{prop}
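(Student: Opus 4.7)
The plan is to exhibit an explicit quasi-inverse $s_m : \mb{C}^\mf{S}_{m-1} \to \mb{C}^\mf{S}_m$ to $r_m$ and verify both composites are equivalent to the identity. The new elements of $\Su^m(\mc{P})$ not in $\Su^{m-1}(\mc{P})$ are exactly those sets containing $\mc{P}_{\leq m}$, and any such has the form $T = \{\mc{P}_{\leq m}\} \cup T_0$ for $T_0$ a (possibly empty) collection of singletons $\{q\}$ with $\mc{P}_{\leq m} \prec \{q\}$. Because $\mc{P}_{\leq m}$ is downwardly-closed, such a $q$ is forced to be incomparable in $\mc{P}$ to every element of $\mc{P}_{\leq m}$; consequently $\{q\} \sqcup \mc{P}_{\leq m}$ is itself an interval satisfying both $\{q\} \prec \mc{P}_{\leq m}$ and $\mc{P}_{\leq m} \prec \{q\}$.

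Given $F \in \mb{C}^\mf{S}_{m-1}$ and $T$ as above with $T_0 \neq \emptyset$, I would set $s_m F(T) := \mc{L}_{\mc{P}_{\leq m}} F(T_0)$; this is forced by the third axiom applied to the edge $T_0 \to T$. For the exceptional case $T = \{\mc{P}_{\leq m}\}$, the restriction of $F$ to $\Su^{m-1}(\mc{P}_{\leq m}) \subseteq \Su^{m-1}(\mc{P})$ lands in the stratified category attached to the induced stratification of $\mf{S}(\mc{P}_{\leq m})$ along $\mc{P}_{\leq m}$; applying Proposition \ref{strat1} to $\mf{S}(\mc{P}_{\leq m})$ (whose indexing poset has cardinality exactly $m$), combined if necessary with an inductive use of the present proposition for the smaller poset $\mc{P}_{\leq m}$, yields a canonical object of $\mf{S}(\mc{P}_{\leq m})$ which I take to be $s_m F(\{\mc{P}_{\leq m}\})$.

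The substantive verification is that $s_m F$ lies in $\mb{C}^\mf{S}_m$. Axiom 1 is automatic. The shrinking axiom on edges $T \to \{\{p\}\} \cup T_0$ reduces, by factoring through $T_0$ in $\Su^{m-1}$, to the identity $\mc{L}_{\{p\}} \circ \mc{L}_{\mc{P}_{\leq m}} = \mc{L}_{\{p\}}$ coming from $\mf{S}(\{p\}) \subseteq \mf{S}(\mc{P}_{\leq m})$. The most delicate case is the addition axiom on edges $T \to T \cup \{\{q\}\}$ between new objects, which reduces to the commutativity $\mc{L}_{\mc{P}_{\leq m}} \mc{L}_{\{q\}} \simeq \mc{L}_{\{q\}} \mc{L}_{\mc{P}_{\leq m}}$ applied to $F(T_0)$; I would deduce this by comparing the two forms of the stratification pullback square for $\{q\} \sqcup \mc{P}_{\leq m}$, which are available precisely because $\prec$ holds in both directions for this particular splitting.

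The composite $r_m \circ s_m$ is the identity by construction. For $s_m \circ r_m \simeq \id$, axiom 3 forces $G(T) \simeq \mc{L}_{\mc{P}_{\leq m}} G(T_0)$ for any $G \in \mb{C}^\mf{S}_m$ whenever $T_0 \neq \emptyset$, and the exceptional value $G(\{\mc{P}_{\leq m}\})$ is pinned down by Proposition \ref{strat1}. The primary obstacle is the commutativity-of-localizations step for axiom 3 on the new-new edges, together with the bookkeeping required to handle the $T_0 = \emptyset$ case using Proposition \ref{strat1} applied to the auxiliary stratification on $\mf{S}(\mc{P}_{\leq m})$.
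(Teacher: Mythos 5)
Your proposal has two genuine problems, one in the stated hypotheses and one in the base case of the induction.

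First, the incomparability and interval claims are false as stated. Downward-closedness of $\mc{P}_{\leq m}$ only rules out $q < p$ for $p \in \mc{P}_{\leq m}$ when $q \notin \mc{P}_{\leq m}$; it does not rule out $q > p$. Indeed this happens already in the smallest nontrivial example: $\mc{P} = \{0,1\}$ with $0 > 1$, $\mc{P}_{\leq 1} = \{1\}$, $q = 0 > 1$. Consequently $\{q\} \sqcup \mc{P}_{\leq m}$ need not be an interval at all: take $\mc{P} = \{a,b,c,d\}$ with $a > b > c$ and $a > d > c$ but $b,d$ incomparable, and $\mc{P}_{\leq 2} = \{b,c\}$, $q = a$; then $a > d > c$ with $d \notin \{a,b,c\}$, so $\{a\}\cup\mc{P}_{\leq 2}$ is not an interval, and $\{a\} \prec \mc{P}_{\leq 2}$ fails. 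Where your argument actually needs these facts --- the addition axiom and condition~1 for a $\prec$-minimal singleton $\{q'\}$ in $T \cup \{\{q'\}\}$ --- they do hold, because $\prec$-minimality forces $\{q'\} \prec \mc{P}_{\leq m}$, which combined with downward-closedness gives incomparability and hence intervalhood. So this part is salvageable once the hypothesis is restated correctly, but as written it asserts much more than is true.

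Second, and more seriously, the $T_0 = \emptyset$ case is circular. To define $s_m F(\{\mc{P}_{\leq m}\})$ you invoke Proposition~\ref{strat1} together with \emph{the present proposition applied to the poset $\mc{P}_{\leq m}$} to promote $F|_{\Su^{m-1}(\mc{P}_{\leq m})}$ from level $m-1$ to level $m$. But when $m = |\mc{P}|$ --- exactly the top case that is needed to chain $\mb{C}^{\mf{S}}_n \simeq \mb{C}^{\mf{S}}_1$ in the proof of Theorem~\ref{strat} --- one has $\mc{P}_{\leq m} = \mc{P}$, so ``the present proposition for $\mc{P}_{\leq m}$'' is literally the statement being proved, at the same $m$ and the same poset, and there is no smaller poset to recurse into. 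Proposition~\ref{strat1} alone gives an equivalence from $\mb{C}^{\mf{S}}_m$, not from $\mb{C}^{\mf{S}}_{m-1}$, and so does not supply the missing value. The same vagueness infects the assertion that $G(\{\mc{P}_{\leq m}\})$ is ``pinned down'' by Proposition~\ref{strat1} in the verification of $s_m \circ r_m \simeq \id$. The paper avoids all of this by showing that $\mb{C}^{\mf{S}}_m$ is the essential image of the right Kan extension $\kappa_m$ along $\Su^{m-1}(\mc{P}) \hookrightarrow \Su^m(\mc{P})$, and computing the Kan-extension value at each new $T$ over a coinitial $\La^2_2$ all of whose vertices lie in $\Su^{m-1}(\mc{P})$; this gives $F(T)$ as an explicit pullback, and the stratification axiom for the decomposition $\mc{P}_{\leq m} = \mc{P}_{\leq m-1} \sqcup \{q\}$ (which is always an interval, being equal to $\mc{P}_{\leq m}$) identifies that pullback condition with the defining axioms of $\mb{C}^{\mf{S}}_m$. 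That handles the construction, its functoriality, and the $T_0 = \emptyset$ case uniformly, with no auxiliary recursion.
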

\begin{proof}
Let 
\[\kappa_m : \mb{C}^\mf{S}_{m - 1} \to \Fun(\Su^m(\mc{P}), \mb{C})\]
be the right Kan extension functor. We claim that $\mb{C}^\mf{S}_m$ is equal to the essential image of $\kappa_m$.

Indeed, let $q$ be the unique element of $\mc{P}_{\leq m} \setminus \mc{P}_{\leq m -1}$. Suppose 
\[T \in \Su^m(\mc{P}) \setminus \Su^{m-1}(\mc{P}).\]
Then $T$ is of the form
\[\{ \{p_1\}, \cdots, \{p_k\}, \mc{P}^{\leq m} \}.\]
Let $\alpha: \La^2_2 \to \Su^{m-1}(\mc{P})$ be the functor with
\begin{align*} & \alpha_T(0) = \{ \{p_1\}, \cdots, \{p_k\}, \{q\}\}, \\
& \alpha_T(1) = \{ \{p_1\}, \cdots, \{p_k\}, \mc{P}_{\leq m -1}\}, \\
& \alpha_T(2) =  \{ \{p_1\}, \cdots, \{p_k\}, \{q\}, \mc{P}_{\leq m-1}\}.\end{align*}
Then $\alpha$ is coinitial in $\Su^{m-1}(\mc{P})_{T/}$. Moreover, $F : \Su^m(\mc{P}) \to \mb{C}$ is an object of $\mb{C}^\mf{S}_m$ if and only if
\begin{itemize}
\item $F|_{\Su^{m-1}(\mc{P})} \in \mb{C}^\mf{S}_{m - 1}$, and
\item for each $T \in \Su^m(\mc{P}) \setminus \Su^{m-1}(\mc{P})$, $F(T) \in \mf{S}(\mc{P}_{\leq m})$ and the maps 
\[F(T) \to F(\alpha_T(0)), F(T) \to \F(\alpha_T(1))\]
are localizations.
\end{itemize}
But by the stratification axiom, the latter condition is equivalent to the condition that the square
\[\begin{tikzcd}
F(T) \ar{r} \ar{d} & F(\alpha_T(0)) \ar{d} \\
F(\alpha_T(1)) \ar{r} & F(\alpha_T(2)) \\
\end{tikzcd}\]
be a pullback. This completes the proof.
\end{proof}
If $n= |\mc{P}|$, we now have equivalences of categories $\mb{C}^\mf{S}_n \toe \mb{C}$ (Proposition \ref{strat1}) and $\mb{C}^\mf{S}_n \toe \mb{C}^\mf{S}_1 \cong \mb{C}^\mf{S}$ (inductively, using Proposition \ref{strat2}). This consitutes a proof of Theorem \ref{strat}.

\begin{exa}
When $\mb{C}$ is the category $\Sp^G$ for a finite abelian group $G$, we recover the statement of \cite[Theorem 3]{AK13}, though in substantially different language.
\end{exa}

\begin{exa} \label{exa:Cp}
Suppose $p$ is a prime and $\mc{M} = \mc{O}_{C_p}$, so that $\mc{P} = \D^1$. Then Theorem \ref{strat} states, after unwinding the definition, that an object $E$ of of $\Mack(\mc{M}) \simeq \Sp^{C_p}$ is given by the following data:
\begin{itemize}
\item A spectrum with $C_p$-action 
\[E_1 \in \mf{S}_\mc{M}(\{\{1\}\}) \simeq \Fun(BC_p, \Sp),\]
the underlying spectrum of $E$;
\item a spectrum
\[E_0 \in \mf{S}_\mc{M}(\{\{0\}\}) \simeq \Sp,\]
the $C_p$-geometric fixed point spectrum of $E$;
\item and a map
\[E_0 \to \mc{L}_{\{0\}} E_1 \simeq E_1^{t C_p}\]
where $(-)^{tG}$ is the Tate spectrum, defined by the cofiber sequence
\[(-)_{hG} \to (-)^{hG} \to (-)^{tG}.\]
\end{itemize}
The epiorbital category $\F_s^{\leq 2}$ is visibly equivalent to $\mc{O}_{C_2}$, so an object 
\[F \in \Mack(\F_s^{\leq 2}) \simeq \Fun^{2-exc}(\Sp, \Sp)\]
is given by the same data as an object of $\Sp^{C_2}$, but in this case, $E_1$ and $E_0$ are interpreted as the second and first derivatives of $F$, respectively. This classification of 2-excisive functors was first carried out in \cite[\S 5]{AC15}.
\end{exa}

We'll close this section by saying a few words about what happens for infinite posets. Let $\mc{P}$ be an infinite poset equipped with a system of finite subposets
\[\emptyset = \mc{P}_{\leq 0} \inc \mc{P}_{\leq 1} \inc \cdots \inc \mc{P}_{\leq n} \inc \cdots \inc \mc{P}\]
which is a threading in the sense that for each $i$, $\mc{P}_{\leq i}$ is downwardly closed and has cardinality $i$, and
\[\bigcup_n \mc{P}_{\leq n} = \mc{P}.\]
\begin{dfn}\label{dfn:prostrat}
Suppose
\[0 = \mb{C}_0 \inc \mb{C}_1 \inc \cdots \inc \mb{C}_n \inc \cdots\]
is a sequence of stable $\iy$-categories such that $\mb{C}_{n - 1}$ is a reflective stable subcategory of $\mb{C}_n$ for all $n$. Suppose we have, for each $n$, a stratification $\mf{S}_n$ of $\mb{C}_n$ along $\mc{P}_{\leq n}$, and that all of these are compatible in the sense that
\[\mb{C}_{n - 1} = \mf{S}_n(\mc{P}_{\leq n - 1})\]
and $\mf{S}_{n - 1}$ is the induced stratification.
Let 
\[\mb{C}_\iy := \llim{n} \mb{C}_n\]
be the limit over the localization maps. We call this data a \emph{pro-stratification} of $\mb{C}_\iy$ along $\mc{P}$.
\end{dfn}

Then it follows from the proof of Theorem \ref{strat} that the diagram
\[\begin{tikzcd}
\mb{C}_n \ar{d}[left]{\mb{L}_{\mc{P}_{\leq n - 1}}} & (\mb{C}_n)_n^{\mf{S}_n} \ar{l}[below]{e_{\mc{P}_{\leq n}}}[above]{\sim} \ar{r}[above]{\sim} \ar{d} & \mb{C}_n^{\mf{S}_n} \ar{d} \\
\mb{C}_{n - 1} & (\mb{C}_{n- 1})^{\mf{S}_{n - 1}}_{n-1} \ar{l}[below]{e_{\mc{P}_{\leq n-1}}}[above]{\sim} \ar{r}[above]{\sim} & \mb{C}_{n - 1}^{\mf{S}_{n - 1}}
\end{tikzcd}\]
commutes up to homotopy for every $n$. Taking the limit as $n \to \infty$ gives an equivalence
\[\mb{C}_\iy  \to \llim{n} \mb{C}_n^{\mf{S}_n} =: \mb{C}_\iy^\mf{S}.\]
The limit $\mb{C}_\iy^\mf{S}$ can be described explicitly as follows. Let
\[\bb{P}^\iy = \colim{n} \bb{P}(\mc{P}_n).\]
and
\[\mb{C}^\iy := \colim{n} \mb{C}_n\]
(which differs from $\mb{C}_\iy$ in that we have taken the colimit over the inclusions rather than the limit over the localizations). Then $\mb{C}_\iy^\mf{S}$ is the full subcategory of $\Fun(\bb{P}^\iy, \mb{C}^\iy)$ spanned by those functors $F$ for which
\[F|_{\bb{P}(\mc{P}_n)} \in \mb{C}_n^{\mf{S}_n}.\]
Thus we have a description of $\mb{C}_\iy$ in terms of (infinite) diagrams of maximally local objects.
\begin{exa}
With $\mc{P}$ as above, let $\mb{C}$ be a symmetric monoidal presentable stable $\iy$-category and let $(K_p)_{p \in \mc{P}}$ be a collection of objects of $\mb{C}$ such that any $K_p$-local object is $K_q$-acyclic unless $p \geq q$. Then letting 
\[\mb{C}_n = \mb{L}_{\bigvee_{p \in \mc{P}_{\leq n}} K_p} \mb{C}\]
gives a pro-stratification of 
\[\mb{C}_\iy = \mb{L}_{\bigvee_{p \in \mc{P}} K_p} \mb{C}\]
along $\mc{P}$. In the case where $\mc{P} = \bb{N}^\op$ and $K_n$ is the Morava $K$-theory $K(n)$, we have expressed the category of harmonic spectra in terms of diagrams of $K(n)$-local spectra. \end{exa}
\begin{exa} \label{promack}
Let
\[\emptyset = \mc{M}_0 \inc \mc{M}_1 \inc \cdots \inc \mc{M}_n \inc \cdots \inc \mc{M} = \colim{n} \mc{M}_n\]
be a sequence of inclusions of EOCs giving rise to the threading
\[\mc{P}_0 \inc \mc{P}_1 \inc \cdots \inc \mc{P}_n \inc \cdots\]
on posets of isomorphism classes. Then $\mc{M}^\amalg$ is disjunctive and we may speak of the category $\Mack(\mc{M}, \mb{C})$ of additive functors from $A^{eff}(\mc{M}^\amalg)$ into some stable target category $\mb{C}$. Letting
\[\mb{C}_n = \Mack(\mc{M}_n, \mb{C})\]
gives a pro-stratification of $\Mack(\mc{M}, \mb{C})$ along $\mc{P}$.
\end{exa}
Example \ref{promack} has a couple of interesting special cases:
\begin{exa}
Let $\mc{M}_n = \F_s^{\leq n}$. Then $\mc{M}$ is the category $\F_s$ of all finite sets and surjective maps, and $\Mack(\mc{M}, \mb{C})$ is equivalent to the category of functors $F : \Sp \to \mb{C}$ which are \emph{weakly analytic} in the sense that 
\[F \simeq \llim{n} P_n F.\]
\end{exa}
\begin{exa}
Let $G$ be a profinite group and let $\mc{M}$ be the category of finite $G$-orbits. Then any threading of the poset $\mc{P}$ of isomorphism classes in $\mc{M}$ gives a pro-stratification of $\Mack(\mc{M}, \mb{C})$, which should be thought of a kind of category of genuine $G$-objects in which only the fixed points under cofinite subgroups are salient.

It's worth noting that given a cofinal system of finite quotients $(H_m)_{m \in \bb{N}}$ of $G$, we could choose our threading so that for each $m$, there is some $n_m$ such that
\[\mc{M}_{n_m} = \mc{O}_{H_m}.\]
Thus
\[\Mack(\mc{M}, \mb{C}) \simeq \lim_{G \twoheadrightarrow H, H \text{ finite}} \Mack(\mc{O}_H, \mb{C}).\]
\end{exa}

\section{$K(n)$-local theory}\label{sec:knlo}
In this brief section, we'll see that symmetry properties which emerge when one works locally with respect to the Morava K-theories $K(n)$ cause large chunks of this theory to collapse. We'll reprove a result of Kuhn on the $K(n)$-local splitting of Taylor towers, and give a new tom Dieck-like splitting result for $K(n)$-local $G$-spectra.

The following ``chromatic blueshift" theorem is a consequence of the results of \cite{GS96} and \cite{HS96}; it appears in roughly this form in \cite{HL13}, and as we shall see, \cite{Kuh04a} is also highly relevant.

\begin{thm}
Let $G$ be a finite group and let $E$ be a $K(n)$-local spectrum with $G$-action. Then the transfer map
\[N : E_{hG} \to E^{hG}\]
is a $K(n)$-local equivalence. Thus the Tate spectrum $E^{tG}$ is $K(n)$-acyclic.
\end{thm}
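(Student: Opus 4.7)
The plan is to reduce, by dévissage on the order of $G$, to the case $G \cong C_p$ cyclic of prime order, where the theorem becomes the foundational result of Greenlees--Sadofsky.

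First I would reduce to the case that $G$ is a $p$-group, using a standard transfer argument. For each prime $p \mid |G|$, fix a Sylow $p$-subgroup $G_p \leq G$, so that the composite
\[ E^{tG} \xrightarrow{\mathrm{res}} E^{tG_p} \xrightarrow{\mathrm{tr}} E^{tG} \]
is multiplication by $[G : G_p]$, which is prime to $p$. After $p$-localizing, this exhibits $E^{tG}_{(p)}$ as a retract of $E^{tG_p}_{(p)}$. Since $E^{tG}$ splits as the sum of its $p$-local pieces, it therefore suffices to prove the theorem when $G$ is a $p$-group.

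Next I would induct on the order of $G$ among $p$-groups. A nontrivial finite $p$-group $P$ has nontrivial center, so we may pick a central subgroup $Z \leq P$ of order $p$ with quotient $Q = P/Z$. For this central extension, we have natural identifications $E_{hP} \simeq (E_{hZ})_{hQ}$ and $E^{hP} \simeq (E^{hZ})^{hQ}$, and the norm map $N_P$ factors through the composite
\[ (E_{hZ})_{hQ} \xrightarrow{(N_Z)_{hQ}} (E^{hZ})_{hQ} \xrightarrow{N_Q} (E^{hZ})^{hQ}. \]
The first map is a $K(n)$-local equivalence because $N_Z$ is (by the $G = C_p$ base case) and $(-)_{hQ}$, being a colimit, preserves $K(n)$-local equivalences; the second map is a $K(n)$-local equivalence because $E^{hZ}$ is automatically $K(n)$-local (as a limit of $K(n)$-local spectra) and $|Q| < |P|$, so the inductive hypothesis for $Q$ applies.

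This reduces everything to the base case $G = C_p$, which is the main obstacle and the only place the specific structure of $K(n)$ enters. Here I would invoke the Ravenel--Wilson computation that $K(n)^*(BC_p)$ is free of finite rank over $K(n)^*$, together with the Tate spectral sequence $\hat{H}^*(C_p; K(n)_* E) \Rightarrow K(n)_*(E^{tC_p})$. The freeness input, together with the $2$-periodicity of Tate cohomology for a cyclic group and the cyclicity of the $K(n)$-homology as a $C_p$-representation, forces the $E_2$-page to vanish, so $E^{tC_p}$ is $K(n)$-acyclic. Equivalently, one can argue directly that the norm $E_{hC_p} \to E^{hC_p}$ induces an isomorphism on $K(n)_*$ by reducing to $E = L_{K(n)} \mathbb{S}$ and exploiting the self-duality of $K(n)^*(BC_p)$. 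Everything else in the proof is formal manipulation with adjunctions and the compatibility of norms under extensions.
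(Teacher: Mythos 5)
The paper itself does not prove this theorem; it cites \cite{GS96}, \cite{HS96}, \cite{HL13} and \cite{Kuh04a}, so there is no in-paper argument to compare against. Your reduction steps are fine and standard: the transfer/restriction argument on Tate spectra (using that $\mathrm{tr}\circ\mathrm{res}=[G:G_p]$ and that $E^{tG}$ is $|G|$-torsion, hence a wedge of $p$-local pieces) correctly reduces to $p$-groups, and the central-extension dévissage (using $E_{hP}\simeq(E_{hZ})_{hQ}$, $E^{hP}\simeq(E^{hZ})^{hQ}$, transitivity of the norm, preservation of $K(n)$-equivalences by $(-)_{hQ}$, and $K(n)$-locality of $E^{hZ}$) correctly reduces to $G=C_p$.

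The gap is in the base case, in two places. First, the Tate spectral sequence you write down, with $E_2=\hat{H}^*(C_p;K(n)_*E)$, converges to $\pi_*\bigl((K(n)\wedge E)^{tC_p}\bigr)$, not to $K(n)_*(E^{tC_p})=\pi_*(K(n)\wedge E^{tC_p})$: smashing with $K(n)$ does not commute with the homotopy limit $E^{hC_p}$, so these two objects differ, and vanishing of the former does not imply vanishing of the latter (the canonical map goes $K(n)\wedge E^{tC_p}\to(K(n)\wedge E)^{tC_p}$, the wrong direction). Second, and more seriously, the claim that the $E_2$-page vanishes is false. Already for $E=K(n)$ with trivial $C_p$-action one has $\hat{H}^*(C_p;K(n)_*)\cong K(n)_*\otimes_{\mathbb{F}_p}\hat{H}^*(C_p;\mathbb{F}_p)\neq 0$; ``freeness of $K(n)^*(BC_p)$ over $K(n)^*$'' is a statement about the homotopy of $K(n)^{BC_p}$, not about group cohomology of $K(n)_*E$ with its $C_p$-action, and there is no ``cyclicity'' of $K(n)_*E$ as a $C_p$-representation to invoke --- the action on $E$ is arbitrary. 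The actual mechanism in Greenlees--Sadofsky is a convergence phenomenon: the Euler class of a faithful $C_p$-representation is nilpotent in $K(n)^*(BC_p)$ by Ravenel's computation, and the Tate construction inverts it, so the localized ring is zero; this shows up as differentials killing a nonzero $E_2$-page, not as a vanishing $E_2$-page. Your parenthetical alternative (``reducing to $E=L_{K(n)}\mathbb{S}$ and exploiting self-duality'') gestures in the right direction but you have not explained how a statement about the $K(n)$-local sphere with trivial action implies the statement for arbitrary $K(n)$-local $E$ with arbitrary $C_p$-action; that reduction is itself a nontrivial step (it is essentially where Kuhn's approach in \cite{Kuh04a} does real work).
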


\begin{thm}\label{thm:knlo}
If $\mc{M}$ is a epiorbital category and $\mb{C}$ is a stable $\iy$-category such that all Tate spectra are zero - for instance, the category of $K(n)$-local spectra - then the comonad $DR$ of Theorem \ref{thm:ac} is the identity comonad, and so the Taylor sequence functor 
\[D : \Mack(\mc{M}, \mb{C}) \to \Fun(\mc{M}^\sim, \mb{C})\]
is an equivalence.
\end{thm}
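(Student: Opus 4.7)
My plan is to deduce the equivalence from the comonadicity result of Theorem \ref{thm:ac}. By that theorem, $D$ is comonadic, so $D$ is an equivalence if and only if its right adjoint $R = \bigvee_{[X]} R^X$ is fully faithful, i.e., the counit $\epsilon: DR \to \id$ of the adjunction $D \dashv R$ is an equivalence. Writing $D = \bigvee_{[Y]} D^Y$ and using that $\Fun(\mc{M}^\sim, \mb{C}) = \prod_{[X]} \Fun(B\Aut(X), \mb{C})$ is semiadditive, so that mapping spaces split into products over pairs $([X], [Y])$, the fully-faithfulness of $R$ amounts to the two claims: (i) $D^X R^X \simeq \id$, and (ii) $D^Y R^X \simeq 0$ whenever $[X] \neq [Y]$. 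Claim (i) is immediate from the fact, noted in the paper, that both $\Xi^X$ and $(i_X)_*$ are fully faithful: $D^X R^X \simeq i_X^* \Phi^X \Xi^X (i_X)_* \simeq i_X^* (i_X)_* \simeq \id$.

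For claim (ii) I would apply the cofiber sequence of Theorem \ref{thm1} to the Mackey functor $R^X F$ with $\mc{N} = \mc{M}_{\leq Y}$ and $\mc{T}$ its upwardly-closed complement, obtaining
\[\Gamma^\mc{T} \Pi^\mc{T} R^X F \to R^X F \to \Xi^{\mc{M}_{\leq Y}} \Phi^Y R^X F.\]
Evaluating at $Y$ and restricting to $\mc{G}_Y$ realizes $D^Y R^X F$ as the cofiber of the transfer map $(\Gamma^\mc{T} \Pi^\mc{T} R^X F)(Y) \to (R^X F)(Y)$. Since $X$ is maximal in $\mc{M}_{\leq X}$, the explicit formula for $(i_X)_!$ proved earlier in the paper, together with its dual for $(i_X)_*$, presents each of these two terms as a direct sum indexed by $\Aut(X)$-orbits of suitable spans in $A^{eff}(\mc{M})$, contributing respectively homotopy orbit spectra $F(X)_{h\mathrm{stab}}$ and homotopy fixed-point spectra $F(X)^{h\mathrm{stab}}$ for stabilizer subgroups of $\Aut(X)$. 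The transfer map is orbit-wise the classical norm, with cofiber the corresponding Tate construction; under the vanishing hypothesis each such Tate spectrum is zero, so the cofiber vanishes and $D^Y R^X F \simeq 0$.

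The main obstacle will be the explicit identification of the transfer map in Theorem \ref{thm1}'s cofiber sequence with a direct sum of classical norms, particularly in the case when $[X]$ and $[Y]$ are incomparable in $\mc{P}_\mc{M}$: there the relevant spans factor through objects of $\mc{M}_{\leq X} \cap \mc{M}_{\leq Y}$ and the orbit combinatorics on the Burnside category require some care. Granted this identification (which uses the same Burnside-category bookkeeping that underpins the proof of Lemma \ref{thm1DA}), Tate vanishing eliminates every contributing cofiber, yielding $D^Y R^X \simeq 0$ and hence the fully-faithfulness of $R$; combined with the comonadicity of Theorem \ref{thm:ac}, this proves that $D$ is an equivalence.
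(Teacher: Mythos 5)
Your overall strategy -- reduce comonadicity plus full faithfulness of $R$ to vanishing of Tate constructions -- is conceptually aligned with the paper, and your verification of claim (i), $D^X R^X \simeq \id$, via full faithfulness of $\Xi^X$ and $(i_X)_*$, is correct. However, the proposal has a genuine unfilled gap, which you yourself flag: the identification of the transfer map in Theorem~\ref{thm1}'s cofiber sequence applied to $R^X F$ with a direct sum of classical norms is not carried out, and it is not a routine bookkeeping exercise. The structure of the argument differs qualitatively according to whether $[Y] < [X]$, $[Y] > [X]$, or $[X]$ and $[Y]$ are incomparable; in the last case $R^X F$ already vanishes at $Y$, so the quantity $D^Y R^X F$ is a suspension of $(\Gamma^{\mc{T}}\Pi^{\mc{T}} R^X F)(Y)$ computed via a colimit of spans through $\mc{M}_{\leq X} \cap \mc{M}_{\leq Y}$, and showing that \emph{this} is a sum of Tate constructions requires an indexed-colimit analysis in the Burnside category that is not supplied by anything you cite (compare the care taken in Lemma~\ref{xyz0} and Example~\ref{exa:Cp2} for a particular instance of such a computation). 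As written, claim (ii) is a statement of belief rather than a proof.

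The paper's proof is shorter precisely because it never attempts to compute $D^Y R^X$ for $[X] \neq [Y]$. Instead it observes that under the Tate-vanishing hypothesis the natural map $(i_X)_! E \to (i_X)_* E$ is an equivalence for every $E \in \Fun(\mc{G}_X, \mb{C})$, because its value at each $Y \in \mc{M}_{\leq X}$ is a finite direct sum of norm maps $E_{h\Aut_f(X)} \to E^{h\Aut_f(X)}$ indexed by isomorphism classes of $f \in \Map_\mc{M}(X, Y)$. This gives $L^X \simeq R^X$ for all $X$, hence $L \simeq R$, and then Proposition~\ref{prop:sectLD} (that $D L \simeq \id$, proved separately by induction over $\mc{P}_\mc{M}$) yields $DR \simeq DL \simeq \id$ immediately. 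In effect, the paper shifts the burden from the right adjoint $R$ to the left adjoint $L$, where the relevant identity is already available, and the only explicit computation needed is the comparison $(i_X)_! \to (i_X)_*$ \emph{inside} $\Mack(\mc{M}_{\leq X}, \mb{C})$ -- which sidesteps the incomparability case entirely. If you want to complete your own route, you would need either to supply the missing Burnside-combinatorics identification case by case, or, more cleanly, to adopt the $L \simeq R$ reduction and invoke Proposition~\ref{prop:sectLD}.
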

Having got this far, the proof is fairly simple.
\begin{proof}
Let $X$ be an object of $\mc{M}$ and let $i_X$ once again denote the full inclusion $A^{eff}(\mc{G}_X) \inj A^{eff}(\mc{M}_{\leq X})$, where $\mc{G}_X$ is the full subcategory of $\mc{M}$ spanned by $X$. Let $E \in \Fun(\mc{G}_X, \mb{C})$. Then for any $Y \in \mc{M}_{\leq X}$, the natural map
\[(i_X)_!(E)(Y) \to (i_X)_*(E)(Y)\]
takes the form
\[\bigoplus_{f \in \Map_\mc{M}(X, Y) / \text{isomorphism}} E_{h \Aut_f(X)} \to \bigoplus_{f \in \Map_\mc{M}(X, Y) / \text{isomorphism}} E^{h \Aut_f(X)}\]
and is thus an equivalence, by our hypothesis. We deduce that for each $X \in \mc{M}$, 
\[L^X \simeq R^X,\]
and so
\[L \simeq R.\]
But $DL \simeq \id$ (Proposition \ref{prop:sectLD}) and so $DR \simeq \id$. This completes the proof.
\end{proof}
\begin{cor}
Any $K(n)$-local $G$-spectrum $E$ (by which we mean a Mackey functor valued in the $K(n)$-local category) satisfies a very strong tom Dieck splitting property: we have an equivalence
\[E \simeq \bigvee_{H \leq G / \text{conjugacy}} L^{G/H} E^{\Phi H}.\]
In particular, for each $H \leq G$, we have a canonical decomposition
\[E^H \simeq \bigvee_{(K \leq H) / \text{conjugacy in } G} \left(  (E^{\Phi K})_{h W(H, K)} \right)\]
where $W(H, K)$ is the relative Weyl group, defined as
\[W(H, K) := (N_G(K) \cap H)/K.\]
\end{cor}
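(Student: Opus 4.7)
The plan is to deduce both formulas from Theorem \ref{thm:knlo} together with Proposition \ref{prop:sectLD} and the explicit formula for the left Kan extension $(i_X)_!$ proved as a corollary of the left-Kan-extension lemma. All of the homotopical content is contained in Theorem \ref{thm:knlo}; everything else is bookkeeping with cosets.

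First I would invoke Theorem \ref{thm:knlo}: the $K(n)$-local category has vanishing Tate constructions (this is the chromatic blueshift statement just recalled), so the Taylor sequence functor $D : \Mack(\mc{O}_G, \mb{C}) \to \Fun(\mc{O}_G^\sim, \mb{C})$ is an equivalence. By Proposition \ref{prop:sectLD}, $L := \bigvee_{[X]} L^X$ satisfies $D \circ L \simeq \id$, so $L$ must be the quasi-inverse of $D$; in particular $L \circ D \simeq \id$. Applied to $E$, this gives
\[E \simeq L(DE) \simeq \bigvee_{[X]} L^X D^X E.\]
Since the isomorphism classes of $\mc{O}_G$ are in bijection with conjugacy classes of subgroups $H \leq G$ via $[G/H]$, and $D^{G/H} E \simeq E^{\Phi H}$ by definition, this yields the first displayed equivalence in the statement.

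For the pointwise formula I would evaluate at $G/H$. Because $L^{G/K} = \Xi^{G/K} \circ (i_{G/K})_!$ and $\Xi^{G/K}$ extends by zero outside $\mc{M}_{\leq G/K}$, the summand $(L^{G/K} E^{\Phi K})(G/H)$ vanishes unless there is a map $G/K \to G/H$ in $\mc{O}_G$; equivalently, unless $K$ is subconjugate in $G$ to $H$, in which case we may choose a representative with $K \leq H$. For such $K$, the corollary computing $(i_{G/K})_!$ on a maximal object yields
\[(L^{G/K} E^{\Phi K})(G/H) \simeq \bigl(E^{\Phi K} \times (G/H)^K\bigr)_{h W_G(K)},\]
using the identifications $\Hom_{\mc{O}_G}(G/K, G/H) \cong (G/H)^K$ and $\Aut(G/K) = W_G(K) = N_G(K)/K$.

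The last step is a purely combinatorial orbit-stabilizer analysis of the discrete $W_G(K)$-set $(G/H)^K$. A direct calculation shows that two cosets $gH, g'H \in (G/H)^K$ lie in the same $W_G(K)$-orbit exactly when the subgroups $g^{-1}Kg, g'^{-1}Kg' \leq H$ are conjugate, and the stabilizer of the orbit through $eH$ is
\[\{nK \in W_G(K) : nH = H\} = (N_G(K) \cap H)/K = W(H,K).\]
Decomposing $(G/H)^K$ into $W_G(K)$-orbits, using $E^{\Phi K} \simeq E^{\Phi K'}$ for conjugate subgroups, and reindexing the resulting double sum by conjugacy classes of subgroups of $H$ collapses it to a single sum, each summand contributing $(E^{\Phi K})_{hW(H,K)}$, which is the second formula. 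The one thing to be careful about is the bookkeeping in this reindexing, since a single $G$-conjugacy class of subgroups of $G$ may contribute several orbits when the corresponding $W_G(K)$-set $(G/H)^K$ has multiple orbits; but this is exactly the combinatorial redundancy that the statement's indexing by $(K \leq H)$ modulo conjugacy is designed to absorb.
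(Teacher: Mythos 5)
Your argument is correct, and since the paper states this corollary immediately after Theorem \ref{thm:knlo} with no proof at all, your chain of deductions --- $D$ is an equivalence by Theorem \ref{thm:knlo}, its section $L$ from Proposition \ref{prop:sectLD} is therefore a two-sided inverse, giving $E \simeq \bigvee_{[H]} L^{G/H} D^{G/H} E$; then evaluate at $G/H$ via extension-by-zero, the explicit formula for $(i_{G/K})_!$, and an orbit count of the $W_G(K)$-set $(G/H)^K$ --- is clearly the intended one, and all the individual steps (the vanishing criterion, the identification $\Hom_{\mc{O}_G}(G/K, G/H) \cong (G/H)^K$, the stabilizer computation $(N_G(K) \cap H)/K$) check out. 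The one point you treat too gingerly is the final reindexing. Your own orbit analysis shows that $(G/H)^K/W_G(K)$ is in bijection with $H$-conjugacy classes of subgroups of $H$ that are $G$-conjugate to $K$, so collapsing the double sum yields a wedge indexed by $H$-conjugacy classes of subgroups $K \leq H$, not $G$-conjugacy classes. These indexing sets genuinely differ in general (take $G = \Sigma_4$, $H = D_8$: the subgroups $\langle (12)(34)\rangle$ and $\langle (13)(24)\rangle$ of $H$ are $G$-conjugate but not $H$-conjugate), and only the $H$-conjugacy count matches what one gets from, say, the rank of the Burnside ring of $H$ or from applying the classical tom Dieck splitting to the group $H$ directly. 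So rather than saying the paper's phrase ``conjugacy in $G$'' is ``designed to absorb'' the multiplicity, you should state plainly that the correct indexing is by $H$-conjugacy; the paper's wording appears to be a minor slip.
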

\begin{cor}[Kuhn]
Let $F : \Sp \to \Sp$ be an $m$-excisive functor taking values in $K(n)$-local spectra. Then the Taylor tower for $F$ splits: we have an equivalence
\[F(X) \simeq \bigvee_{i = 0}^m \bb{D}_i F(X).\]
\end{cor}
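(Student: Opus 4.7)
The plan is to apply Theorem \ref{thm:knlo} to the EOC $\F_s^{\leq m}$ of Example \ref{exa:fsn} with target $\mb{C} = \Sp_{K(n)}$, and then translate the resulting splitting of Mackey functors into functor-calculus language using the conjectural equivalence
\[\Mack(\F_s^{\leq m}, \mb{C}) \simeq \Fun^{m\text{-exc}}(\Sp, \mb{C})\]
assumed throughout the paper.

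The blueshift result recalled at the start of this section implies that every Tate spectrum is $K(n)$-acyclic, so the hypothesis of Theorem \ref{thm:knlo} is satisfied for $\mb{C} = \Sp_{K(n)}$. The Taylor sequence functor $D$ is therefore an equivalence; moreover, as established in the course of proving Theorem \ref{thm:knlo}, the inverse is given by $R \simeq L = \bigvee_{i=0}^m L^{[i]}$, where $[i]$ denotes the isomorphism class in $\F_s^{\leq m}$ of a set of cardinality $i$. Viewing $F$ as an object of $\Mack(\F_s^{\leq m}, \Sp_{K(n)})$, we thus obtain a canonical natural equivalence
\[F \simeq \bigvee_{i=0}^m L^{[i]} D^{[i]} F.\]

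It remains to interpret each summand. By the statement of the conjecture, $D^{[i]} F$ is the $i$-th Goodwillie derivative of $F$ as a spectrum with $\Si_i$-action. The functor $L^{[i]} = \Xi^{[i]} \circ (i_{[i]})_!$ sends a $\Si_i$-spectrum $M$ to the Mackey functor supported on $\F_s^{\leq i}$ whose restriction to $\mc{G}_{[i]}$ is $M$; under the conjecture this corresponds to the $i$-homogeneous $m$-excisive functor with $i$-th derivative $M$, which when $M = D^{[i]} F$ is precisely $\bb{D}_i F$. Evaluating the splitting above at $X$ yields the claimed equivalence.

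Since the substantive work has already been done in Theorem \ref{thm:knlo}, the remaining argument is essentially a dictionary translation. The only mild subtlety is verifying that $L^{[i]}$ corresponds under the conjecture to the standard construction that sends a $\Si_i$-spectrum to the $i$-homogeneous functor $X \mapsto (M \wedge X^{\wedge i})_{h\Si_i}$; this compatibility is implicit in the statement of the conjecture and can be checked at the generator level using the explicit formula for $(i_{[i]})_!$ recorded in the corollary following Theorem \ref{thm:Gadd}.
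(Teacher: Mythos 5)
Your proof is correct and takes the same route the paper implicitly intends: the corollary is stated without explicit proof, as an immediate consequence of Theorem \ref{thm:knlo} applied to $\mc{M} = \F_s^{\leq m}$ and $\mb{C} = \Sp_{K(n)}$, translated through the conjectural equivalence $\Mack(\F_s^{\leq m}, \mb{C}) \simeq \Fun^{m\text{-exc}}(\Sp, \mb{C})$. Your write-up correctly supplies the steps the paper leaves to the reader --- blueshift killing Tate constructions, $LD \simeq \id$ giving the splitting $F \simeq \bigvee_i L^{[i]} D^{[i]} F$, and the dictionary identifying $L^{[i]} D^{[i]} F$ with the $i$-homogeneous layer $\bb{D}_i F$.
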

\appendix
\section{The free semiadditive $\iy$-category on a group} \label{app:Gadd}

This appendix is devoted to proving Theorem \ref{thm:Gadd}, which we restate here (with slightly different notation) for convenience: 
\begin{thm}
Let $G$ be an (ordinary) groupoid. Then $A^{eff}(G^\amalg)$ is the free semiadditive $\iy$-category on $G$: for any semiadditive $\iy$-category $\mb{C}$, the natural inclusion induces an equivalence of categories
\[\Fun^\oplus(A^{eff}(G^\amalg), \mb{C}) \to \Fun(G, \mb{C}).\]
\end{thm}
First, we note that we may assume $G$ is connected. Indeed, having proved this, the general case will follow from the fact that if $(\mc{M}_i)_{i \in I}$ is an $I$-indexed family of orbital categories, then
\[ A^{eff}\left( \left( \coprod_{I}\mc{M}_i \right)^{\amalg}   \right) \simeq \bigoplus_{I} A^{eff}(\mc{M}_i^{\amalg}).  \]
We will further assume that our connected groupoid has only one object, and denote the corresponding group, too, by $G$.

Now let's get some notation out of the way. Let $\F_*$ be the category of finite pointed sets. If $S \in \F_*$, denote by $S^o$ the finite set $S \setminus \{*\}$. If $s \in S^o$, denote by $\chi_s : S \to \{s\}_+$ the characteristic map at $s$:
\[\chi_s(t) = \begin{cases} s & t = s \\ * & \text{otherwise.} \end{cases}\]
\begin{dfn}{\cite[Remark 2.4.2.2]{HA}}
Let $\mb{C}$ be an $\iy$-category which admits finite products. Recall that by definition, the category $\CMon(\mb{C})$ of \emph{commutative monoids} in $\mb{C}$ is the full subcategory of $\Fun(\F_*, \mb{C})$ spanned by those functors $F$ satisfying the Segal condition: for each $S \in \F_*$, the edges $F(S) \to F(\{s\}_+)$ determine an equivalence
\[F(S) \simeq \prod_{s \in S^o} F(\{s\}_+).\]
We'll abbreviate $\CMon(\Top)$ to $\CMon$.
\end{dfn}

Now let's begin the proof. First we note that $G^\amalg$ is equivalent to the category $\F r^\mc{G}$ of finite sets with \emph{free} $G$-action.

\begin{dfn}
Let $\mc{L}(G)$ be the Lawvere theory of commutative monoids with $G$-action: the full subcategory of $\Fun(G, \CMon)$, which is equivalent to $\CMon(\Fun(G, \Top))$, spanned by the the essential image of $\F \subseteq \Top$ under the left adjoint of the forgetful functor $\Fun(G, \CMon) \to \Top$.
\end{dfn}

\begin{thm}
There is an equivalence of categories between $A^{eff}(\FfG)$ and $\mc{L}(G)$.
\end{thm}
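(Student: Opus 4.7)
My plan is to construct an explicit equivalence $\Phi : A^{eff}(\FfG) \to \mc{L}(G)$ sending a finite free $G$-set $X$ to the free commutative $G$-equivariant monoid $\bb{N}[X]$. The natural construction on morphisms is the expected transfer-pushforward combination: a span $X \leftarrow U \to Y$ of finite free $G$-sets is sent to the composite
\[\bb{N}[X] \to \bb{N}[U] \to \bb{N}[Y]\]
where the first arrow is the transfer induced by $U \to X$ (using the commutative monoid structure of $\bb{N}[U]$ to sum over preimages) and the second is the pushforward induced by $U \to Y$. Compatibility with composition of spans then reduces to the standard matrix-multiplication identity: the preimage under a pullback decomposes as a sum of products of preimages of the legs.

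Full faithfulness reduces to a direct calculation of mapping spaces, which I would perform skeletally. On the Lawvere side, by the free/forgetful adjunction for $G$-equivariant commutative monoids,
\[\Map_{\mc{L}(G)}(\bb{N}[G \X S], \bb{N}[G \X T]) \simeq \Map_{\Fun(G, \Top)}(G \X S, \bb{N}[G \X T]) \simeq \bb{N}[G \X T]^S,\]
whose $\pi_0$ is the set of functions $S \X G \X T \to \bb{N}$ and whose components are products of classifying spaces of symmetric groups (the automorphism groups of the underlying multisets). On the Burnside side, the space $A^{eff}(\FfG)(G \X S, G \X T)$ is the nerve of the groupoid of spans $(G \X S) \leftarrow U \to (G \X T)$ with $U$ free; writing $U = G \X R$, the span datum reduces (after quotienting by the free $G$-action on $R$) to a function $R \to S \X G \X T$, and taking further the quotient by reorderings of $R$ yields precisely the same groupoid, matched component-by-component. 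Essential surjectivity is immediate from the construction.

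The main obstacle is promoting this assignment to a functor of $\iy$-categories, since $A^{eff}(\FfG)$ is built from a simplicial set of bispans and $\Phi$ must be compatible with all higher coherences. I would handle this by exploiting the semiadditivity of $\Fun(G, \CMon)$: the composite $\FfG \inj \Fun(G, \Top) \to \Fun(G, \CMon)$ sending $X$ to $\bb{N}[X]$ preserves both pullbacks and finite coproducts, and so by the general formalism of effective Burnside categories for disjunctive $\iy$-categories (following Barwick's framework \cite{Bar14}, applied to the disjunctive category $\FfG$ from Lemma \ref{lem:eocdisj}) it factors canonically through $A^{eff}(\FfG)$ as a direct-sum-preserving functor. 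The essential image of this extension lands exactly in $\mc{L}(G)$, giving $\Phi$, and the mapping-space calculation above then promotes it to an equivalence.
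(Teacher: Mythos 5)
Your one-categorical picture (objects to free $G$-equivariant commutative monoids, spans to transfer-then-pushforward) is correct, and your mapping-space computation matches the paper's in spirit: both sides become products of classifying spaces of symmetric groups, and the paper likewise reduces full faithfulness to a single endomorphism space and a check on free generators.

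The gap is in the mechanism you propose for promoting the assignment to an $\infty$-functor, which is precisely where the real work lies. You claim the composite $\FfG \hookrightarrow \Fun(G, \Top) \to \Fun(G, \CMon)$, $X \mapsto \bb{N}[X]$, preserves pullbacks. This is false: already for $G$ trivial, take the pullback of $\{a,b\} \to \ast$ and $\{c,d\} \to \ast$. The comparison map $\bb{N}[\{a,b\} \times \{c,d\}] \cong \bb{N}^4 \to \bb{N}^2 \times_{\bb{N}} \bb{N}^2$ identifies $e_{(a,c)} + e_{(b,d)}$ with $e_{(a,d)} + e_{(b,c)}$ in the target, so it is not injective; free commutative monoids do not commute with pullbacks. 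Moreover, even granting pullback-preservation, the Barwick functoriality you cite produces $A^{eff}(F) : A^{eff}(\mb{I}) \to A^{eff}(\mb{J})$ only when both $\mb{I}$ and $\mb{J}$ are disjunctive; $\Fun(G, \CMon)$ is not disjunctive, so that formalism does not yield a factorization of your functor through $A^{eff}(\FfG)$. What is actually required is coherent ``Mackey'' data -- a covariant pushforward, a wrong-way transfer, and a Beck--Chevalley compatibility on pullback squares -- organized into an $\infty$-functor. The paper supplies exactly this by hand: it constructs an explicit left fibration $A^{eff}(\FfG)_+ \ltimes G \to A^{eff}(\FfG) \times G$, shows it is a left fibration, straightens it, and combines it with a product-and-$\F_*$ manipulation on staircase diagrams to produce the functor into $\Fun(G \times \F_*, \Top)$. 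That unstraightened construction is the content your proposal is missing; it cannot be outsourced to the general disjunctive formalism.
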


\begin{proof}
First let's construct the functor. $\Fun(G, \CMon)$ is a certain full subcategory of $\Fun(G \X \F_*, \Top)$, so we can do this by constructing a functor
\[A^{eff}(\FfG) \X G \X \F_* \to \Top\]
adjointing over, and checking it makes sense on objects.

We'll do the construction in two stages. First, note that we have a functor 
\[A^{eff}(\FfG) \X A^{eff}(\F) \to A^{eff}(\FfG)\]
simply by taking objectwise products of staircase diagrams. We also have an inclusion $i: \F_* \to A^{eff}(\F)$ as follows: an $n$-simplex of $\F_*$, given by a chain of pointed maps 
\[X_0 \os{f_1} \to X_1 \os{f_2} \to \cdots \os{f_n} \to X_n\]
maps to the staircase diagram $(A_{ij})_{0 \leq i \leq j \leq n}$ with
\[\begin{cases} A_{ii} = X_i^o & i = j \\
A_{ij} = (f_j f_{j - 1} \cdots f_{i + 1})^{-1} X_j^o & i \neq j.
\end{cases} \]
Functoriality is easily checked, as is the fact that all squares which ought to be pullbacks are pullbacks. Composing these two and multiplying by $G$, we get a map
\[\mu: A^{eff}(\FfG) \X G \X \mc{F}_* \to A^{eff}(\FfG) \X G.\]

Next, we'll define a functor $A^{eff}(\FfG) \X G \to \Top$ by defining a left fibration 
\[\kappa: A^{eff}(\FfG)_+ \ltimes G \to A^{eff}(\FfG) \X G.\]
Here $A^{eff}(\FfG)_+ \ltimes G$ is itself the total space of a cocartesian fibration over $G$. A vertex of $A^{eff}(\FfG)_+ \ltimes G$ is a free $G$-set $U$ together with a finite set $S$ and a map of sets $S \to U$. An edge of $A^{eff}(\FfG)_+ \ltimes G$ with source $a_1: S_1 \to U_1$ and target $a_2: S_2 \to U_2$ is an element $g \in G$ together with a diagram
\[\begin{tikzcd}
S_1 \ar{d}{g \circ a_1} & S_2 \ar[white]{dl}[very near start, black]{\urcorner} \ar{l} \ar{d} \ar[bend left=20]{dd}{a_2} \\
U_1 & \ar{l} W \ar{d} \\
& U_2
\end{tikzcd}\]
with higher simplices defined analogously. We define $\kappa$ to be the map that forgets $S$. To prove that $\kappa$ is a left fibration, define $A^{eff}(\FfG)_+$ to be the fiber of $A^{eff}(\FfG)_+ \ltimes G$ over the vertex of $G$. $\kappa$ restricts to a map
\[A^{eff}(\FfG)_+ \to A^{eff}(\FfG)\]
which is actually isomorphic to the target map
\[t: A^{eff}(\FfG)_{G/} \to A^{eff}(\FfG)\]
which is definitely a left fibration. Together with the fact that the preimage under $\kappa$ of an edge of $G$ is an equivalence, this implies that $\kappa$ itself is a left fibration.

Let $K$ be a functor $A^{eff}(\FfG) \X G \to \Top$ that classifies $\kappa$. We now have a well-defined functor 
\[\sigma : A^{eff}(\FfG)  \to \Fun(G \X \F_*, \Top)\]
defined by composing $\mu$ with $K$ and then taking adjoints. For each free finite $G$-set $U$, we must show that the functor
\[\F_* \X \{U\} \os{\mu} \to A^{eff}(\FfG) \to \Fun(G, \Top)\]
is a commutative monoid. Unwinding the definitions shows that this is a consequence of the fact that $K$ preserves products. Moreover, one identifies $\sigma(U)$ with 
\[(\Sigma)^{\X U},\]
where $\Sigma = \coprod_{n \geq 0}  \Sigma_n$ and $G$ acts by permutation on the factors. This is equivalent to the free commutative monoid in $\Fun(G, \Top)$ on the set $U/G$. So $\sigma$ factors through a functor
\[\alpha: A^{eff}(\FfG) \to \mc{L}(G).\]

From here, showing that $\alpha$ is an equivalence is the easy part. Essential surjectivity is obvious. For full faithfulness, it suffices to show that $\alpha$ induces an equivalence
\[\alpha_0: \Map_{A^{eff}(\FfG)}(G, G) \to \Map_{\mc{L}(G)}(\Sigma^{\X G}, \Sigma^{\X G})\]
since all of the other relevant maps are products of some copies of this one.

Since $G$ is a commutative monoid in $A^{eff}(\FfG)$ and $\Sigma^{\X G}$ is a commutative monoid in $\mc{L}(G)$, $\alpha_0$ underlies a map of commutative monoids, both of which are easily seen to be equivalent as commutative monoids to $\Sigma^{\X G}$. Thus it's enough to check that $\alpha_0$ takes a set of free generators to a set of free generators. On the left, we may take this set to be
\[\left( \begin{tikzcd}[column sep = small] & G \ar[equals]{ld} \ar{dr}{r_g} \\ G && G \end{tikzcd} \right)_{g \in G} \]
where $r_g$ is right multiplication by $g$. On the other hand, we may take our set of generators on the right to be those automorphisms of $\Sigma^{\X G}$ induced by right multiplication by elements of $g$. Tracing through the definitions a final time, we see that $\alpha_0$ maps the one set of generators to the other. This completes the proof.
\end{proof}

We have the functor $G \to A^{eff}(\FfG)$ that takes, for example, a 2-simplex $(g, h)$ to the diagram
\[\begin{tikzcd}[column sep = small]
&& G \ar[equals]{dl} \ar{dr}{r_g} \\
& G \ar[equals]{dl} \ar{dr}{r_g} & & G \ar[equals]{dl} \ar{dr}{r_h} \\
G && G && G.
\end{tikzcd}\]
We have the composite
\[i_G: G \X \F_* \to A^{eff}(\FfG) \X A^{eff}(\F) \to A^{eff}(\FfG).\]

\begin{prop} \label{roff}
$i_G$ is the universal commutative monoid with $G$-action, which is to say the initial functor satisfying the Segal condition from $G \X \F_*$ to a category with finite products.
\end{prop}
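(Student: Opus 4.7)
The plan is to transfer the desired universal property from the Lawvere theory $\mc{L}(G)$ via the equivalence $\alpha: A^{eff}(\FfG) \toe \mc{L}(G)$ established in the preceding theorem. So the proposition will follow once one verifies: (a) that $i_G$ actually satisfies the Segal condition, and (b) that $\mc{L}(G)$, via $\alpha \circ i_G$, enjoys the initiality property.

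First I would trace through definitions to identify $\alpha \circ i_G: G \X \F_* \to \mc{L}(G)$ explicitly. Unwinding the construction of $i_G$, the object $i_G(g, S) \in A^{eff}(\FfG)$ is the free $G$-set $G \X S^o$, and under $\alpha$ this corresponds to $(\Sigma^{\X G})^{\X S^o}$, the free commutative monoid with $G$-action on the underlying set $S^o$. The characteristic maps $\chi_s : S \to \{s\}_+$ translate to the canonical projections onto the $s$-th factor, and the morphism associated to $g \in G$ is the self-action by $g$. Because $A^{eff}(\FfG)$ is semiadditive, the decomposition $G \X S^o \simeq \coprod_{s \in S^o} G$ visibly exhibits $i_G(g, S)$ as the product of $|S^o|$ copies of $i_G(g, \{s\}_+) = G$, so $i_G$ satisfies the Segal condition.

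Next I would establish the universal property of $\mc{L}(G)$ as the $\iy$-categorical Lawvere theory for commutative monoids with $G$-action: for any $\iy$-category $\mb{D}$ admitting finite products, restriction along $\alpha \circ i_G$ induces an equivalence between $\Fun^{\X}(\mc{L}(G), \mb{D})$ (finite-product-preserving functors) and the category of Segal functors $G \X \F_* \to \mb{D}$, i.e., commutative monoids with $G$-action in $\mb{D}$. This follows from the description of $\mc{L}(G)$ as the full subcategory of $\CMon(\Fun(G, \Top))$ generated under finite products by the free commutative monoid $\Sigma^{\X G}$: every object of $\mc{L}(G)$ is a finite power of $\Sigma^{\X G}$, and every morphism is determined, via the Segal condition and the $G$-action, by its composites with the characteristic maps. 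One can make this rigorous either by a direct argument or by invoking \cite[Proposition 5.5.8.15]{HTT} applied to the sifted cocompletion $\mc{D}(\mc{L}(G))$.

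Combining these ingredients, precomposition with $i_G$ yields an equivalence $(i_G)^* : \Fun^{\X}(A^{eff}(\FfG), \mb{D}) \toe \Fun^{\mathrm{Segal}}(G \X \F_*, \mb{D})$, which is the claimed universal property. The main obstacle I anticipate is the careful $\iy$-categorical formulation of the Lawvere theory statement, particularly rigorously asserting that finite products generate $\mc{L}(G)$ from $G \X \F_*$ in the appropriate homotopical sense; a secondary point is the verification that $\alpha$ itself preserves finite products, which comes down to matching the semiadditive direct sums in $A^{eff}(\FfG)$ (given by disjoint union of free $G$-sets) with cartesian products of free commutative monoids in $\mc{L}(G)$.
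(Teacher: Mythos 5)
Your overall strategy---transfer the universal property through the equivalence $\alpha : A^{eff}(\FfG) \toe \mc{L}(G)$ and verify the Segal condition separately---is the same as the paper's. But the central step is precisely the one you gloss over: establishing that $\mc{L}(G)$ is in fact the $\iy$-categorical Lawvere theory, i.e., that for any $\mb{D}$ with finite products, product-preserving functors out of $\mc{L}(G)$ agree with commutative monoids with $G$-action in $\mb{D}$. You assert ``every object is a finite power, every morphism is determined by composites with characteristic maps'' and wave at HTT 5.5.8.15, but that proposition concerns sifted cocompletions and product-preserving \emph{presheaves}; it does not directly give the initiality statement about Segal functors into an arbitrary finite-product category. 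The paper instead invokes \cite[Proposition 5.3.6.2]{HTT} to \emph{construct} the universal category $U(G)$ from scratch, as (the opposite of) a full subcategory of a localization of $\mb{Psh}((G \X \F_*)^\op)$, observes that the localization is Segalification so that the local objects are commutative monoid spaces with $G$-action, and then identifies $U(G)^\op$ with $\mc{L}(G)$ by recognizing that the Segalification of $\coprod_{\langle n \rangle^o} Y(\langle 1 \rangle)$ is the free commutative monoid with $G$-action on $n$ generators. This is the step your proof proposal is missing: without a handle like 5.3.6.2, the claim that $\mc{L}(G)$ has the required initiality is not proved.

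A secondary gap: once you have the equivalence $A^{eff}(\FfG) \simeq U(G)$, you still need to check it is induced by $i_G$, not just that it exists. Your unwinding of $\alpha\circ i_G$ on objects and morphisms points in the right direction, but the paper makes this step sharp by using semiadditivity of $A^{eff}(\FfG)$ together with \cite[Corollary 2.4.3.10]{HA}, which says a commutative monoid object in a semiadditive $\iy$-category is determined by its restriction to $G \X \langle 1\rangle$; both $i_G$ and the universal commutative monoid send $G \X \langle 1 \rangle$ to $G$ with its right action, and this pins the equivalence down. Your check of the Segal condition via the decomposition $G \X S^o \simeq \coprod_{s} G$ is fine but slightly indirect; the paper's explicit identification of the image of the inert map $\chi_j$ as the span $[n] \leftarrow [1] \to [1]$ is cleaner since it makes manifest which maps are being compared.
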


\begin{proof}
First we must show that $i_G$ is indeed a commutative monoid. $G$ doesn't make any difference here; we just need to show that $i : \F_* \to A^{eff}(\F)$ satisfies the Segal condition. This follows from the fact that the image of the inert map $\chi_j : \angs{n} \to \angs{1}$ is the span

\[
\begin{tikzcd}[column sep=small]
& {[1]} \ar{ld}{j} \ar{rd} & \\
{[n]} && {[1]}.
\end{tikzcd}
\]

Now let $U(G)$ be the universal category supporting a commutative monoid with $G$-action. Since $U(G)$ is the initial category under $G \X \F_*$ that takes certain diagrams to limit diagrams, Proposition 5.3.6.2 of \cite{HTT} gives a prescription for building it as the opposite of a full subcategory of a certain localization $S^{-1}\mb{Psh}((G \X \F_*)^\op)$ of the presheaf category $\mb{Psh}((G \X \F_*)^\op)$. Since we're in this business, let's let $Y: (G \X \F_*)^\op \to \mb{Psh}((G \X \F_*)^\op)$ denote the Yoneda embedding.

In this case, the localization $S$ is generated by the morphisms
\[\coprod_{\angs{n}^o} Y(\angs{1}) \to Y(\angs{n})\]
given by precomposition with the inert maps, and so localization is ``Segalification" and the local objects are exactly the commutative monoid spaces with $G$-action. Thus $U(G)^\op$ is the full subcategory of $\Fun(\F_* \X G, \Top)$ spanned by the Segalifications of
\[\coprod_{\angs{n}^o} Y(\angs{1})\]
as $n$ varies. But $Y(\angs{1})$ is, by definition, left Kan extended along the inclusion 
\[\angs{1} : \ast \to \F_* \X G\]
and so its Segalification is the free commutative monoid with $G$-action on one generator. Since Segalification preserves coproducts, the other objects follow.

Now we've given an equivalence between $U(G)^\op$ and $\mc{L}(G)$, and therefore \newline $A^{eff}(\FfG)$. We know that this category is canonically self-opposite, so we might as well forget the $\op$ on $U(G)$. Let's show that this equivalence comes from $i_G$.

Since $A^{eff}(\FfG)$ is semiadditive, specifying a commutative monoid with $G$-action $BG \X \F_* \to A^{eff}(\FfG)$ is equivalent to specifying it on $G \X \angs{1}$ (see Corollary 2.4.3.10 of \cite{HA}). Both $i_G$ and the universal commutative monoid in $U(G)$ take $G \X \angs{1}$ to the $G$-set $G$ with its right action on itself. This completes the proof.  \end{proof}

\begin{cor} \label{riff}
Let $\mb{C}$ be an $\iy$-category with finite products. Then pullback along $i_G$ gives an equivalence
\[\Fun^\X(A^{eff}(\FfG), \mb{C}) \to \CMon(\Fun(G, \mb{C})).\]
\end{cor}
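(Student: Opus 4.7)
The plan is to read off Corollary \ref{riff} as a direct consequence of the universal property established in Proposition \ref{roff}. Recall that Proposition \ref{roff} identifies $i_G : G \X \F_* \to A^{eff}(\FfG)$ as the universal functor from $G \X \F_*$ into an $\iy$-category with finite products which satisfies the Segal condition, i.e.\ exhibits a commutative monoid with $G$-action. So the corollary is really a translation of that universal property into the language of commutative monoids in $\Fun(G, \mb{C})$.

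Concretely, first I would unwind the right-hand side. Since $\mb{C}$ has finite products, the functor category $\Fun(G, \mb{C})$ has finite products computed objectwise, so by definition $\CMon(\Fun(G, \mb{C}))$ is the full subcategory of $\Fun(\F_*, \Fun(G, \mb{C})) \simeq \Fun(G \X \F_*, \mb{C})$ spanned by those functors $F : G \X \F_* \to \mb{C}$ for which, for every $S \in \F_*$, the family of edges $F(-, S) \to F(-, \{s\}_+)$ induced by $\chi_s$ exhibits $F(-,S)$ as a product $\prod_{s \in S^o} F(-, \{s\}_+)$ in $\Fun(G, \mb{C})$; that is, the Segal condition in the $\F_*$-variable.

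Next, I would note that pullback along $i_G$ lands in exactly this subcategory: if $\Phi : A^{eff}(\FfG) \to \mb{C}$ preserves finite products, then $\Phi \circ i_G$ is Segal because $i_G$ itself was shown in the proof of Proposition \ref{roff} to satisfy the Segal condition (inert maps $\chi_s$ map to the spans that exhibit direct sum decompositions in $A^{eff}(\FfG)$) and product-preserving functors carry these decompositions to products. Conversely, given any Segal $F : G \X \F_* \to \mb{C}$, the universal property of $i_G$ from Proposition \ref{roff} provides a unique (up to contractible choice) finite-product-preserving extension $\tilde{F} : A^{eff}(\FfG) \to \mb{C}$ with $\tilde{F} \circ i_G \simeq F$. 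This identifies pullback along $i_G$ as an equivalence of $\iy$-categories, since the universal property supplies both essential surjectivity and full faithfulness (the latter because the mapping spaces on either side are computed by the same limit of Segal data).

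There isn't really a hard step here; the only thing to be a little careful about is the identification of $\CMon(\Fun(G, \mb{C}))$ with Segal objects $G \X \F_* \to \mb{C}$, but this is immediate from the fact that limits in $\Fun(G, \mb{C})$ are pointwise and the usual exponential equivalence $\Fun(\F_*, \Fun(G, \mb{C})) \simeq \Fun(G \X \F_*, \mb{C})$. Once that is in hand, the corollary is just the statement of Proposition \ref{roff} in different words.
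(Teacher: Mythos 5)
Your proposal is correct and is exactly the paper's approach: the paper's proof is the one-liner ``This is just a restatement of \ref{roff},'' and your writeup simply unpacks that restatement by identifying $\CMon(\Fun(G,\mb{C}))$ with Segal functors $G \X \F_* \to \mb{C}$ and then invoking the universal property of Proposition \ref{roff} (which, via HTT 5.3.6.2, already gives the full equivalence of functor categories, not merely a bijection on objects).
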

\begin{proof}
This is just a restatement of \ref{roff}.
\end{proof}

We deduce Theorem \ref{thm:Gadd} as the special case of \ref{riff} where $\mb{C}$ is semiadditive.

\section{The proof of Lemma \ref{thm1DA}} \label{app:lem}
%\begin{lem}
%Suppose $S$ is an $\iy$-category, $p : C \to S$ is an inner fibration and $q : S \to C$ is a section of $p$ such that for every object $s \in S$, $q(s)$ is an initial object of $C_s$. Then $q$ is left anodyne.
%\end{lem}
%\begin{proof}
%By induction on the simplices of $S$, we reduce to the following:
%\begin{lem}
%Suppose $p : C \to \D^n$ is an inner fibration, and $q : \D^n \to C$ is a section of $p$ such that for every vertex $x \in \D^n$, $q(x)$ is an initial object of $C_x$. Then the inclusion
%\[C_{\del \D^n} \amalg_{\del \D^n} \D^n \to C\]
%is left anodyne.
%\end{lem}
%\end{proof}
\begin{lem}\label{lem:C_XD}
Let $\mb{C}$ be an $\iy$-category, $z : \mb{D} \to \mb{C}$ the inclusion of a full subcategory and $X \in \mb{C}$ an object. Let $\star$ denote the join of simplicial sets, and let 
\begin{align*} i_0 : \D^0 \to \D^0 \star (\D^n \X \D^1) \\  i_1 : \D^n \X \{0\} \to \D^0 \star (\D^n \X \D^1) \\ i_2 : \D^n \X \{1\} \to \D^0 \star (\D^n \X \D^1)\end{align*}
be the natural inclusions.
We define a simplicial set $\mb{C}_{X/\mb{D}/}$ whose $n$-simplices are maps $\alpha : \D^0  \star (\D^n \X \D^1) \to \mb{C}$ with 
\[\alpha \circ i_0 = X, \, \, \, \, \alpha \circ i_1 \in \Fun(\D^n, \mb{D}).\]
Then the map $p_2 : \mb{C}_{X/ \mb{D}/} \to \mb{C}$ coming from precomposition with $i_2$ is cocartesian, and its cocartesian edges are those $\alpha$ for which the image of $ \alpha \circ i_1$ is an equivalence. Moreover, the inclusion
\[\lambda : \mb{C}_{X/} \X_{\mb{C}} \mb{D} \inj \mb{C}_{X / \mb{D}/}\]
formed by precomposition with the collapse map $\D^0 \star (\D^n \X \D^1) \to \D^0 \star \D^n$ is coinitial, and therefore left anodyne \cite[Proposition 4.1.1.3]{HTT}.
\end{lem}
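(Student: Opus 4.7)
My plan is to identify $\mb{C}_{X/\mb{D}/}$ with a familiar pullback, from which the cocartesian fibration claims follow readily, and then to establish the coinitiality of $\lambda$ by exhibiting it as the inclusion of a coreflective subcategory.

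Using the slice-join adjunction, which identifies $\Map(K, \mb{C}_{X/})$ with the subspace of $\Map(\D^0 \star K, \mb{C})$ of maps sending the cone point to $X$, one checks directly that $\mb{C}_{X/\mb{D}/}$ is isomorphic as a simplicial set to the pullback
\[\mb{Q} := (\mb{C}_{X/})^{\D^1} \X_{\mb{C}_{X/}, \mathrm{ev}_0} (\mb{C}_{X/} \X_\mb{C} \mb{D}).\]
An object of $\mb{Q}$ is an arrow $a \to b$ in $\mb{C}_{X/}$ whose source $a$ lies in the full subcategory $\mb{C}_{X/} \X_\mb{C} \mb{D}$, i.e.\ a factorization $X \to D \to Y$ with $D \in \mb{D}$; under the isomorphism, $p_2$ corresponds to $\mathrm{ev}_1$ on the first factor followed by the projection $\mb{C}_{X/} \to \mb{C}$. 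Now $\mathrm{ev}_1 : (\mb{C}_{X/})^{\D^1} \to \mb{C}_{X/}$ is a cocartesian fibration whose cocartesian edges are those with $\mathrm{ev}_0$-image an equivalence, $\mb{C}_{X/} \to \mb{C}$ is a left fibration, and their composite $(\mb{C}_{X/})^{\D^1} \to \mb{C}$ is hence cocartesian. Since the inclusion $\mb{Q} \hookrightarrow (\mb{C}_{X/})^{\D^1}$ is a full subcategory closed under the specified cocartesian lifts (which act as identities on the $\mb{C}_{X/} \X_\mb{C} \mb{D}$ component), $p_2 : \mb{Q} \to \mb{C}$ is cocartesian; a cocartesian lift of $g : Y \to Y'$ at $\alpha = (X \to D \to Y)$ is the evident morphism to $(X \to D \to Y')$ obtained by keeping $D$ fixed and postcomposing with $g$, and the cocartesian edges of $p_2$ are exactly those $\alpha$ for which $\alpha \circ i_1$ is an equivalence.

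For the coinitiality of $\lambda$, I construct a right adjoint $r : \mb{C}_{X/\mb{D}/} \to \mb{C}_{X/} \X_\mb{C} \mb{D}$ by precomposition along $\D^0 \star (\D^n \X \{0\}) \hookrightarrow \D^0 \star (\D^n \X \D^1)$; on objects $r(X \to D \to Y) = (X \to D)$, and $r \circ \lambda = \id$ holds strictly. The counit $\varepsilon : \lambda r \to \id$ has component at $\alpha = (X \to D \to Y)$ given by the natural morphism $(X \to D \xrightarrow{\id} D) \to (X \to D \to Y)$ with identity on the $\D^1 \X \{0\}$ side and the structure map $D \to Y$ on the $\D^1 \X \{1\}$ side; both paths around the resulting square equal $D \to Y$, so the coherence is automatic, and the triangle identities follow by inspection using $r \circ \lambda = \id$. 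With $\lambda \dashv r$ in hand, the dual of \cite[Theorem 4.1.3.1]{HTT} reduces coinitiality to weak contractibility of
\[(\mb{C}_{X/} \X_\mb{C} \mb{D}) \X_{\mb{C}_{X/\mb{D}/}} (\mb{C}_{X/\mb{D}/})_{/\alpha}\]
for each $\alpha$; by the adjunction this slice is equivalent to $(\mb{C}_{X/} \X_\mb{C} \mb{D})_{/r(\alpha)}$, which has terminal object $r(\alpha)$ and is hence weakly contractible. Left anodyneness then follows since $\lambda$ is a monomorphism.

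The main obstacle is promoting the pointwise description of $\varepsilon$ to a bona fide natural transformation at the simplicial level, i.e.\ constructing the simplicial homotopy $\mb{C}_{X/\mb{D}/} \X \D^1 \to \mb{C}_{X/\mb{D}/}$ from $\lambda \circ r$ to $\id$. This is best handled by writing down the corresponding map of defining simplicial data explicitly, using the join and product structure of the shapes involved; no substantial combinatorial obstruction arises.
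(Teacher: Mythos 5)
Your proof is correct, but it routes through a genuinely different decomposition than the paper. You begin by identifying $\mb{C}_{X/\mb{D}/}$ with the pullback $(\mb{C}_{X/})^{\D^1} \times_{\mb{C}_{X/},\,\mathrm{ev}_0} (\mb{C}_{X/}\times_{\mb{C}}\mb{D})$ and then imports the desired structure from the well-known cocartesian fibration $\mathrm{ev}_1$ on the arrow category; the paper instead works directly with the defining simplicial data, verifying the cocartesian lifting property by inspection of explicit diagrams and establishing the adjunction $\lambda\dashv r$ by showing that the cocartesian fibration over $\D^1$ classifying $\lambda$ is also cartesian. Your route is more conceptual and modular, while the paper's is more hands-on but self-contained. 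One improvement worth making: the ``main obstacle'' you flag at the end --- promoting the pointwise description of the counit to a simplicial homotopy --- actually dissolves under your own identification. Under the pullback, $\lambda$ is the restriction of the constant-diagram functor $c:\mb{C}_{X/}\to(\mb{C}_{X/})^{\D^1}$ and $r$ is the restriction of $\mathrm{ev}_0$; since $c\dashv\mathrm{ev}_0$ is a known adjunction, and both $\mb{Q}\subset(\mb{C}_{X/})^{\D^1}$ and $\mb{C}_{X/}\times_\mb{C}\mb{D}\subset\mb{C}_{X/}$ are full subcategories that $\lambda$ and $r$ respect, the adjunction restricts for free, with no explicit homotopy required. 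Also note that once you have $\lambda\dashv r$, coinitiality is immediate (a left adjoint is coinitial, which is what the paper's citation of \cite[Proposition 4.1.1.3]{HTT} encodes); your subsequent invocation of Quillen's Theorem A, while correct, re-derives this and is redundant.
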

\begin{proof}
First we show that any edge $\alpha$ for which $\alpha \circ i_1$ is an equivalence is cocartesian. This is the claim that any commutative diagram of the form
\[\begin{tikzcd}[column sep = tiny]
&& T_2  \ar{dd} \\
& T_1 \ar{ur}{\sim} \ar[crossing over]{rr} \ar{dd} && T_3 \ar{dd} \\
X \ar{ur} \ar{dr} && W_2 \ar{dr} \\
& W_1 \ar{ur} \ar{rr} && W_3 \end{tikzcd}\]
with $T_1 \to T_2$ an equivalence can be completed to a diagram from $\D^0 \star (\D^2 \X \D^1)$, which is clear by inspection. Since there are plenty of these edges, $p_2$ is cocartesian.

Now we tackle the coinitiality claim. In fact, we'll show that $\lambda$ admits a right adjoint, which suffices. Let $\La \to \D^1$ be the cocartesian fibration classified by $\la$; an $n$-simplex of $\La$ is a map $\tau : \D^n \to \D^1$ together with a map
\[\alpha : \left( \D^0 \star \left \{(i, j) \in \D^n \X \D^1 \, | \, j = 0 \text{ or } i \in \tau^{-1}(1)\right \} \right) \to \mb{C}\]
We wish to show that $\alpha$ is also cartesian. In fact, we claim that an edge
\[\begin{tikzcd}[column sep = tiny]
& T_2 \ar{rr} && T_3 \ar{dd} \\
X \ar{ur} \\
&&& W_3 \end{tikzcd}\]
of $\Lambda$ over the nondegenerate edge of $\D^1$ is cartesian if $T_2 \to T_3$ is an equivalence. This is the claim that any commuting diagram of the form
\[\begin{tikzcd}[column sep = tiny]
&& T_2 \ar{dr}{\sim} \\
& T_1 \ar{rr} && T_3 \ar{dd} \\
X \ar{ur} \ar[bend left]{uurr} \\
&&& W_3 \end{tikzcd}\]
can be extended to a commuting diagram of the form
\[\begin{tikzcd}[column sep = tiny]
&& T_2 \ar{dr}{\sim} \\
& T_1 \ar{ur} \ar{rr} && T_3 \ar{dd} \\
X \ar{ur} \ar[bend left]{uurr} \\
&&& W_3, \end{tikzcd}\]
which, again, is clear.
\end{proof}
We list some formal consequences of Lemma \ref{lem:C_XD}.
\begin{cor} \label{cor:reskanval}
Let $\mb{C}_{X/ \mb{D}/}^\Top$ be a fibrant replacement for $\mb{C}_{X/ \mb{D}/}$ in the covariant model structure over $\mb{C}$, so that $\mb{C}_{X/ \mb{D}/}^\Top \to \mb{C}$ is a left fibration and for each object $W \in \mb{C}$, the map
\[(\mb{C}_{X/ \mb{D}/}^\Top)_W \to (\mb{C}_{X/ \mb{D}/})_W\]
is a Kan-Quillen weak equivalence. Then the functor classified by $\mb{C}_{X/ \mb{D}/}^\Top$ is equivalent to the restriction and left Kan extension $z_! z^* \Map(X, -)$ of the functor corepresented by $X$.

Now let $\beta : \mb{C}_{X/ \mb{D}/} \to \mb{C}_{X/}$ be given on n-simplices by precomposition with 
\[i_0 \star i_2 : \D^0 \star \D^n \to \D^0 \star (\D^n \X \D^1).\]
 Since $\mb{C}_{X/}$ is a left fibration, we have a commutative diagram
\[\begin{tikzcd}
\mb{C}_{X/} \X_\mb{C} \mb{D} \ar{d}{\lambda} \ar{rr}{\beta} & & \mb{C}_{X/} \ar{d} \\
\mb{C}_{X/ \mb{D}/}  \ar{r} & \mb{C}_{X/\mb{D}/}^\Top \ar{r} \ar[dotted]{ur}{\beta'} & \mb{C}.
\end{tikzcd} \]
It follows that the counit map $c_X : z_! z^* \Map(X, -) \to \Map(X, -)$ is given, after unstraightening, by $\beta'$.
\end{cor}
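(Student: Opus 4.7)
The plan is to deduce both assertions from the behavior of straightening under pullback and pushforward, using Lemma \ref{lem:C_XD} to pass between $\mb{C}_{X/\mb{D}/}$ and the more tractable pullback $\mb{C}_{X/} \X_{\mb{C}} \mb{D}$. The first claim then becomes a standard computation in the theory of covariant fibrations, and the second follows from the universal property of the counit under the $z_! \dashv z^*$ adjunction.

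For the first claim, since $\lambda$ is left anodyne by Lemma \ref{lem:C_XD}, it is in particular a covariant weak equivalence over $\mb{C}$, so $\mb{C}_{X/\mb{D}/}^\Top$ also serves as a covariant fibrant replacement of $\mb{C}_{X/} \X_{\mb{C}} \mb{D} \to \mb{C}$. Now $\mb{C}_{X/} \to \mb{C}$ is the left fibration classifying $\Map(X,-)$, and since $\mb{D}$ is a full subcategory of $\mb{C}$, the pullback $\mb{C}_{X/} \X_{\mb{C}} \mb{D} \to \mb{D}$ is itself a left fibration classifying $z^* \Map(X,-)$. The standard identification of the left Kan extension $z_!$ with the passage from left fibrations over $\mb{D}$ to left fibrations over $\mb{C}$ (by postcomposition with $z$ followed by covariant fibrant replacement) then yields that $\mb{C}_{X/\mb{D}/}^\Top$ straightens to $z_! z^* \Map(X,-)$.

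For the counit identification, I would invoke the universal property of the $z_! \dashv z^*$ adjunction: the counit $c_X$ is characterized up to homotopy as the unique natural transformation $z_! z^* \Map(X,-) \to \Map(X,-)$ whose $z$-restriction is the identity on $z^* \Map(X,-)$. So it suffices to check that $z^*\beta'$ is homotopic to the identity, and since $\lambda$ is a covariant equivalence, this reduces to computing $\beta \circ \lambda$ on fibers over $\mb{D}$. Unwinding the construction of $\lambda$ from Lemma \ref{lem:C_XD}, an $n$-simplex $X \to Y_0 \to \cdots \to Y_n$ of $\mb{C}_{X/} \X_{\mb{C}} \mb{D}$ (with all $Y_i \in \mb{D}$) maps to the doubly-degenerate diagram on $\D^0 \star (\D^n \X \D^1)$ whose $j=0$ and $j=1$ rows both equal $Y_0 \to \cdots \to Y_n$; since $\beta$ extracts the $j=1$ row, $\beta \circ \lambda$ coincides with the canonical projection $\mb{C}_{X/} \X_{\mb{C}} \mb{D} \to \mb{C}_{X/}$, whose restriction to the fiber $\Map(X,Y)$ over any $Y \in \mb{D}$ is the identity. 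The main obstacle is the careful invocation of the straightening identification in the first step; once it is accepted, the remainder is a routine unwinding of the combinatorics set up in Lemma \ref{lem:C_XD}.
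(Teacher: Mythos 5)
The paper states this corollary without a separate proof, describing it merely as a ``formal consequence'' of Lemma \ref{lem:C_XD}; your argument correctly supplies the intended details, using the standard identification of $z_!$ on left fibrations as postcomposition with $z$ followed by covariant fibrant replacement, together with the left-anodyneness of $\lambda$ and the universal property of the counit. One small point worth making explicit: the phrase ``$z^*\beta'$ is homotopic to the identity'' implicitly invokes the equivalence $z^* z_! z^*F \simeq z^*F$, which holds because $z$ is fully faithful (so the unit of $z_! \dashv z^*$ is an equivalence); what you actually verify, correctly, is that $z^*\beta'$ precomposed with the unit --- i.e.\ the restriction over $\mb{D}$ of $\beta' \circ (\text{fibrant replacement}) \circ \lambda = \beta \circ \lambda$ --- is the canonical projection, hence the identity on fibers.
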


\begin{lem}\label{lem:Kantyp}
Let $W \in \mb{C}$ and let $f : X \to W$ be a morphism in $\mb{C}$. Then the homotopy fiber of
\[c_{X, W} : (z_! z^* \Map(X, -))_W \to \Map(X, W)\]
over $f$ is given, up to weak equivalence, by the simplicial set $\mho_{W, f}$ whose $n$-simplices are maps $Z : \D^{n + 2} \to \mb{C}$ such that 
\begin{itemize}
\item $Z|_{\D^{\{0, n + 2\}}} = f$, and
\item for each $i$ with $0 < i < n + 2$, $Z(i) \in \mb{D}$.
\end{itemize}
\end{lem}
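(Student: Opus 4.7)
The plan is to reformulate the problem in the overcategory $\mb{C}_{/W}$, where the target of the relevant counit becomes contractible, and then apply the machinery of Lemma \ref{lem:C_XD} and Corollary \ref{cor:reskanval} inside the overcategory.

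First, by the pointwise formula for left Kan extension along the fully faithful $z : \mb{D} \inj \mb{C}$, we have
\[
(z_! z^* \Map(X, -))_W \simeq \colim{(D \to W) \in \mb{D}_{/W}} \Map(X, D),
\]
and $c_{X, W}$ becomes the composition map. Since $\iy$-categorical pullbacks in $\Top$ commute with colimits, the homotopy fiber over $f$ distributes:
\[
\text{hofib}_f(c_{X, W}) \simeq \colim{(D \to W) \in \mb{D}_{/W}} \Map_{\mb{C}_{/W}}(f, D \to W).
\]

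Second, I would apply Corollary \ref{cor:reskanval} in the overcategory $\mb{C}_{/W}$, to the fully faithful subcategory $\overline{z} : \mb{D}_{/W} \inj \mb{C}_{/W}$, the object $f \in \mb{C}_{/W}$, and the terminal object $\id_W \in \mb{C}_{/W}$. The resulting counit $c_{f, \id_W}$ has contractible target $\Map_{\mb{C}_{/W}}(f, \id_W)$ (since $\id_W$ is terminal in $\mb{C}_{/W}$), so the homotopy fiber equals the source. Combining, I obtain a weak equivalence
\[
\text{hofib}_f(c_{X, W}) \simeq \bigl((\mb{C}_{/W})_{f/\mb{D}_{/W}/}^\Top\bigr)_{\id_W}.
\]

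Third, I would unwind the simplicial-set description via the join-slice adjunction: an $n$-simplex of $((\mb{C}_{/W})_{f/\mb{D}_{/W}/})_{\id_W}$ corresponds to a map $\D^0 \star (\D^n \X \D^1) \star \D^0 \to \mb{C}$ in which the first and last vertices are $X$ and $W$, the bottom row $\D^n \X \{0\}$ lies in $\mb{D}$, the subcomplex $(\D^n \X \{1\}) \star \D^0$ is constant at $W$, and the edge from first to last vertex is $f$. I would then exhibit a weak equivalence to $\mho_{W, f}$ by restricting such an $\alpha$ to its ``backbone'' $(n+2)$-simplex spanned by the cone vertex, the bottom row, and the last $\D^0$.

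The main obstacle is verifying that the restriction-to-backbone map is a weak equivalence. Although it is not an isomorphism of simplicial sets (the ``off-backbone'' data such as edges $(i, 0) \to (j, 1)$ is free, not determined by the backbone), the constant-at-$W$ constraint on $(\D^n \X \{1\}) \star \D^0$ pins down such off-backbone data up to contractible choice via the 2-simplices relating them to a vertex of the constant subcomplex. Making this precise requires a combinatorial argument in the spirit of the coinitiality argument in Lemma \ref{lem:C_XD}, exhibiting the backbone inclusion as admitting a deformation retraction from the ambient simplicial set.
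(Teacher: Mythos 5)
Your proposal is a genuinely different route to the paper's own proof, and I believe it is correct in outline. The paper computes the homotopy fiber of $c_{X,W}$ by showing that the map $\beta_W : (\mb{C}_{X/\mb{D}/})_W \to \Hom^L(X,W)$ is a categorical fibration over a Kan complex, hence a cocartesian fibration, and then argues that its literal fibers coincide with its homotopy fibers; the fiber $\beta_{W,f}$ is described explicitly and compared with $\mho_{W,f}$ by ``a minor variant of \cite[Proposition 4.2.1.5]{HTT}.'' You instead invoke universality of colimits in $\Top$ to commute the homotopy fiber past the pointwise Kan extension colimit, identify $\text{hofib}_f(c_{X,W})$ with $\colim_{\mb{D}_{/W}} \Map_{\mb{C}_{/W}}(f,-)$, and re-apply Corollary \ref{cor:reskanval} inside $\mb{C}_{/W}$ where the terminality of $\id_W$ trivializes the counit's target. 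This sidesteps the cocartesian-fibration manipulations entirely, at the cost of passing through the slice category and invoking Corollary \ref{cor:reskanval} a second time. What both routes have in common is that they reduce the problem to an explicit simplicial set built from maps out of $\D^0 \star (\D^n \times \D^1)$ (collapsed or constant on $\D^n \times \{1\}$) and then must compare it with $\mho_{W,f}$; this is where the real combinatorial work lies, and neither you nor the paper fully spells it out --- the paper cites the $\star$-versus-$\diamond$ comparison of HTT 4.2.1.5, and you sketch a backbone deformation retraction. Those two arguments are morally the same, and if you carry out your sketched retraction following the pattern of the coinitiality argument in Lemma \ref{lem:C_XD} (exhibit an explicit right adjoint / simplicial homotopy retracting onto the image of the backbone restriction), your proof would be complete. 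One small point to make explicit: your step distributing $\text{hofib}_f$ over the colimit relies on pullbacks commuting with colimits indexed by an arbitrary $\iy$-category in $\Top$, which holds because colimits are universal there; this is fine but should be stated as the input.
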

\begin{proof}
We know that the Joyal model structure is self-enriched, by using \cite[Corollary 2.2.5.4]{HTT} to deduce that the pushout-product of a trivial cofibration with a cofibration is a trivial cofibration, and it follows that if $K \to L$ is any cofibration of simplicial sets and $\mb{E}$ is a quasicategory, then $\Fun(L, \mb{E}) \to \Fun(K, \mb{E})$ is a categorical fibration. Since $\beta$ is formed from such a fibration by pullback, $\beta$ is also a categorical fibration.

The value of $c_X$ on $W$ is given, up to weak equivalence, by the map
\[\beta_W : (\mb{C}_{X/\mb{D}/})_W \to (\mb{C}_{X/})_W = \Hom^L(X, W).\]
Since the target of $\beta_W$ is a Kan complex and $\beta_W$ is a categorical fibration, it is a cocartesian fibration \cite[Proposition 3.3.18]{HTT}, and since a fibrant replacement for $\beta_W$ in the covariant model structure over $\Hom^L(X, W)$ is automatically a Kan fibration, the fibers of $\beta_W$ are its homotopy fibers.

By definition, the fiber $\beta_{W, f}$ over $f \in \Hom^L(X, W)$ is the simplicial set whose $n$-simplices are maps $Z' : (\D^0 \star (\D^n \X \D^1)) / (\D^n \X \{1\}) \to \mb{C}$ such that 
\begin{itemize}
\item $Z' \circ i_1 \inc \mb{D}$, and
\item the $(n + 1)$-simplex $Z' \circ (i_0 \star i_2)$ is the image of $f $ under the rightmost degeneracy; that is, it is the totally degenerate $n$-simplex of $\mb{C}_{X/}$ at $f$.
\end{itemize}
In other words, an $n$-simplex of $\beta_{W, f}$ is a map 
\[Z'' : \D^0 \star ((\D^n \X \D^1) / (\D^n \X \{1\}))\]
with $Z''(\D^n \X \{0\}) \in \mb{D}_n$ and $Z''(\D^0 \star ((\D^n \X \{1\})/(\D^n \X \{1\})) = f$. From here, the proof that $\beta_{W, f} \simeq \mho_{W, f}$ is a minor variant of the proof of \cite[Proposition 4.2.1.5]{HTT}.
\end{proof}

We now immerse ourselves in the notation of Lemma \ref{thm1DA}, which we restate here for convenience.
\begin{lem} 
Let $\mc{M}$ be an epiorbital category, $\mc{N}$ a downwardly closed subcategory of $\mc{M}$ and $\mc{T}$ its upwardly closed complement. Denote the restriction $ A^{eff}(i_\mc{T})^*$ and the left Kan extension $A^{eff}(i_\mc{T})_!$ respectively by $\Pi^\mc{T}$ and $\Gamma^\mc{T}$, and similarly denote $A^{eff}(j_\mc{N})^*$ and $A^{eff}(j_\mc{N})_!$ by $\Xi^\mc{N}$ and $\Phi^\mc{N}$ respectively. Then there's a cofiber sequence of functors $\Mack(\mc{M}, \CMon) \to \Mack(\mc{M}, \CMon)$
\[\Gamma^\mc{T} \Pi^\mc{T} \os{\epsilon} \to \Id  \os{\eta} \to \Xi^\mc{N} \Phi^\mc{N},\]
where $\epsilon$ and $\eta$ are the counit and unit of their respective adjunctions.
\end{lem}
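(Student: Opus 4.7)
The plan is to follow the commented-out proof sketch that appears in the source, adapted to the $\CMon$-valued setting where we cannot use stability. The crux is a sublemma: if $M \in \Mack(\mc{M}, \CMon)$ satisfies $\Pi^\mc{T} M = 0$, then the unit $\eta_M : M \to \Xi^\mc{N} \Phi^\mc{N} M$ is an equivalence. Once this is established, the remainder of the argument is formal.

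For the sublemma, I would check on objects. It holds trivially on $\mc{T}$. For $X \in \mc{N}$, the value $\Phi^\mc{N} M(X)$ is a colimit given by the left Kan extension along $A^{eff}(j_\mc{N})$. The strategy is to use that $A^{eff}(j_\mc{N})$ is a bicartesian fibration (via Barwick's results applied to the retraction $j_\mc{N}$, which is a right adjoint and thus preserves pullbacks), that all its fibers are equivalent by the connectedness of $A^{eff}(\mc{N})$, and that the fiber over $\emptyset$ is $A^{eff}(\mc{T})$, which has a zero object and is therefore weakly contractible. Under the hypothesis $\Pi^\mc{T} M = 0$, the functor $M$ restricts to a constant functor on each fiber, so the cofinality of the projection $q : A^{eff}(\mc{M})_{/X} \to A^{eff}(\mc{N})_{/X} \times_{A^{eff}(\mc{N})} A^{eff}(\mc{M})$ identifies $\Phi^\mc{N} M(X)$ with $M(X)$. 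The appendix lemmas (Lemma \ref{lem:C_XD}, Corollary \ref{cor:reskanval}, Lemma \ref{lem:Kantyp}) supply the explicit simplicial models of the overcategories and comma categories needed to make this cofinality argument rigorous.

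Next, the sublemma immediately implies $\Xi^\mc{N}$ is fully faithful with essential image $\Mack_\mc{N}(\mc{M}, \CMon)$: for $N \in \Mack(\mc{N}, \CMon)$, the object $\Xi^\mc{N} N$ lies in $\Mack_\mc{N}$, so applying the sublemma to $M = \Xi^\mc{N} N$ and invoking the triangle identity shows the counit $\Phi^\mc{N} \Xi^\mc{N} N \to N$ is an equivalence. Thus $\Phi^\mc{N}$ realizes $\Mack_\mc{N}(\mc{M}, \CMon)$ as a reflective subcategory.

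Finally, to deduce the cofiber sequence, I would observe: (i) $\Phi^\mc{N} \Gamma^\mc{T} = 0$ by adjunction (since $\Pi^\mc{T} \Xi^\mc{N} = 0$), so $\eta_M \circ \epsilon_M = 0$ and $\eta_M$ factors through $\tilde{\eta} : \cof(\epsilon_M) \to \Xi^\mc{N} \Phi^\mc{N} M$; (ii) $\Pi^\mc{T} \epsilon_M$ is an equivalence by the triangle identity for $\Gamma^\mc{T} \dashv \Pi^\mc{T}$ (since $\Gamma^\mc{T}$ is fully faithful), whence $\cof(\epsilon_M) \in \Mack_\mc{N}$; (iii) $\Phi^\mc{N}$, as a left adjoint, preserves cofibers, so $\Phi^\mc{N} \cof(\epsilon_M) \simeq \cof(\Phi^\mc{N} \epsilon_M) \simeq \cof(0 \to \Phi^\mc{N} M) \simeq \Phi^\mc{N} M$; (iv) since both source and target of $\tilde{\eta}$ lie in $\Mack_\mc{N}$ and $\Xi^\mc{N}$ restricted to this essential image is an equivalence onto it, the equivalence after applying $\Phi^\mc{N}$ lifts to an equivalence $\tilde{\eta}$. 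The main obstacle is the sublemma, specifically verifying that $M$ is $\iy$-categorically constant on fibers of $A^{eff}(j_\mc{N})$: we need $M$ to send all morphisms within a fiber (which are certain spans in the effective Burnside category) to equivalences, not merely to assign zero to the $\mc{T}$-objects appearing. This requires the delicate combinatorial analysis of Burnside spans that the appendix lemmas are designed to handle.
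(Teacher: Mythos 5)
Your proposal follows a genuinely different route from the paper's actual proof, namely the route that appears \emph{commented out} in the source: a standalone sublemma (that $\Pi^\mc{T} M = 0$ implies the unit $\eta_M$ is an equivalence) followed by a formal deduction of the cofiber sequence. The paper abandons that route; its actual proof of Lemma \ref{thm1DA} in Appendix B reduces to corepresentable Mackey functors $c_X = \Map_{A^{eff}(\mc{M})}(X,-)$, identifies the value of the counit $\Gamma^\mc{T}\Pi^\mc{T} c_X \to c_X$ fiberwise with the simplicial set $\mho_{W,f}$, and shows by explicit simplicial deformation retracts that $\mho_{W,f}$ is contractible or empty according to whether the apex of the span $f$ lies in $\mc{T}^\amalg$. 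This gives the counit as an inclusion of connected components and the unit as the complementary projection, from which the cofiber sequence is immediate. Your steps (i)--(iv) are a careful and correct replacement for the stability-dependent parts of the commented-out deduction, and they are nice; in particular the observation that $\Phi^\mc{N}\Gamma^\mc{T} = 0$ by adjointness to $\Pi^\mc{T}\Xi^\mc{N} = 0$ and the lifting argument via conservativity of $\Phi^\mc{N}|_{\Mack_\mc{N}}$ are sound.

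The gap is in your plan for the sublemma. You write that Lemma \ref{lem:C_XD}, Corollary \ref{cor:reskanval}, and Lemma \ref{lem:Kantyp} ``supply the explicit simplicial models \ldots needed to make this cofinality argument rigorous,'' but those results do something else entirely: they compute the fiber of the counit $z_! z^* \Map(X,-) \to \Map(X,-)$ for $z$ the inclusion of a \emph{full} subcategory (this is the input to the $\mho_{W,f}$ analysis). Your cofinality argument instead requires (a) that $A^{eff}(j_\mc{N})$ is a cocartesian (indeed bicartesian) fibration, which rests on verifying that $j_\mc{N}$ is itself a bicartesian fibration and then appealing to \cite[11.6]{Bar14} -- a claim that is asserted, not proved, in the commented-out sketch; (b) that fibers of the pullback $q$ are weakly contractible, via the transport-equivalence-of-fibers argument and the presence of a zero object over $\emptyset$; and (c) that $M$ is $\iy$-categorically constant on each fiber. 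Point (c), which you correctly flag as delicate, can in fact be handled directly from additivity of $M$ and the decomposition of objects of $\mc{M}^\amalg$ into their $\mc{N}$- and $\mc{T}$-parts (every morphism in a fiber of $A^{eff}(j_\mc{N})$ induces the identity on the $\mc{N}$-summand after applying $M$), but this needs to be spelled out, and none of the appendix machinery helps. So the approach is plausible and logically inverted relative to the paper's (you prove the sublemma and deduce the cofiber sequence; the paper proves the cofiber sequence and, in the stable case, deduces the sublemma as Corollary \ref{mackn0}), but as written the central technical step is outsourced to tools that do not apply, and the fibration-theoretic claims from the commented-out sketch are taken on faith.
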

\begin{proof}
Since all of the functors in this sequence are colimit-preserving, it suffices to check that its value on each corepresentable Mackey functor is a cofiber sequence. Let $X \in A^{eff}(\mc{M})$ be an object and let $f : X \lef Y \to W$ be a morphism in $A^{eff}(\mc{M})$. We'll analyze the fiber $\mho_{W, f}$ of Lemma \ref{lem:Kantyp}.

For $\mb{C}$ an $\iy$-category, let $\twa_\mb{C}$ be the twisted arrow category of $\mb{C}$ (see \cite[\S 2]{Bar14}) and let $\twa^\mb{C} := \twa_\mb{C}^\op$ be its opposite. Then an $n$-simplex of $\mho_{W, f}$ is by definition a functor 
\[\varrho : \twa^{\D^{n + 2}} \to \mc{M}^\amalg\]
such the span 
\[\varrho(0, 0) \lef \varrho(0, n + 2) \to \varrho(n+ 2, n+ 2)\]
 coincides with $f$ and $\varrho(i, i) \in \mc{T}^\amalg$ for all $i$ with $0 < i < n + 2$. By the upward closedness of $\mc{T}$, the latter condition implices that $\varrho(i,j) \in \mc{T}^\amalg$ for all $(i, j) \neq (0, 0), (n + 2, n +2)$, and in particular $\mho_{W, f}$ is empty unless $Y \in \mc{T}^\amalg$. We claim that if $Y \in \mc{T}^\amalg$, then $\mho_{W, f}$ is contractible. In fact, let $\Gamma_{W, f}$ be the subsimplicial set of $\mho_{W, f}$ whose $n$-simplices are those which factor through the morphism 
\[\gamma : \twa^{\D^{n + 2}} \to \twa^{\D^{n + 2}} \hspace{4 em}  \gamma(i, j) = \begin{cases}
(0, 0) & \text{if } (i, j) = (0, 0) \\
(n + 2, n + 2) & \text{if } (i, j) = (n + 2, n + 2) \\
(0, n + 2) & \text{otherwise.} \end{cases}  \]

If $Y \in \mc{T}^\amalg$, then clearly $\Gamma_{W, f} \cong *$, and we claim that $\Gamma_{W, f}$ is a simplicial deformation retract of $\mho_{W, f}$. We'll do this in two stages as follows. For each integer $k$, let $\gamma^L_k : \twa^{\D^{n + 2}} \to \twa^{\D^{n + 2}}$ be defined by
\[\gamma^L_k (i, j) = (\min(i-k, 0), j)\]
and dually, define $\gamma^R_k : \twa^{\D^{n + 2}} \to \twa^{\D^{n + 2}}$ by
\[\gamma^R_k(i, j) = (i, \max(j + k, n + 2)).\]
Let $\Gamma^L_{W, f}$ be the subsimplicial set of $\mho_{W, f}$ whose $n$-simplices factor through $\gamma^L_{n + 1}$, and define $\Gamma^R_{W, f}$ similarly; we will show that each of $\Gamma^L_{W, f}$ and $\Gamma^R_{W, f}$ is a simplicial deformation retract of $\mho_{W, f}$, and since
\[\Gamma_{W, f} = \Gamma^L_{W, f} \cap \Gamma^R_{W, f},\]
this will complete the proof of the claim. We will prove the result for $\Gamma^R_{W, f}$; the result for $\Gamma^L_{W, f}$ is, of course, entirely dual.

For each $n, l$ with $0 \leq l \leq n + 1$, let $\tau_{n, l} : \D^n \to \D^1$ be the unique map with
\[\tau_{n, l}^{-1}(0) = [0, \cdots, n - l],\]
where we interpret $[0, -1]$ as the empty interval. Then we define a map
\[\Theta : \mho_{W, f} \X \D^1 \to \mho_{W, f}\]
on $n$-simplices by
\[(\varrho, \tau_{n, l}) \mapsto \varrho \circ \gamma^R_l.\]
Then $\Theta_1$ is a retraction onto $\Gamma^R_{W, f}$, so we have the required simplicial homotopy.

What we have proved so far is that 
\[\eps_{c_X} : \Gamma^\mc{T} \Pi^\mc{T} c_X \to c_X,\]
after evaluation on an object $W$, is homotopic to the inclusion of the connected components of $\Map(X, W)$ comprising those maps $f : X \lef Y \to W$ with $Y \in A^{eff}(\mc{T})$. What's left is easy: $\eta_{c_X}(W)$ is the natural map
\[\Map_{A^{eff}(\mc{M})}(X, W) \to \Map_{A^{eff}(\mc{N})}(j(X), j(W)),\]
which is just the projection away from the image of $\eps_{c_X}(W)$. This concludes the proof of Lemma \ref{thm1DA}.
\end{proof}
\bibliographystyle{alpha}
\bibliography{mybib}
\end{document}